\numberwithin{equation}{section}
\newtheorem{thm}{Theorem}[section]
\newtheorem{lema}{Lemma}[section]
\newtheorem{prop}{Proposition}[section]
\newtheorem{rmk}{Remark}[section]
\newcommand{\p}{\partial}
\renewcommand{\Pi}{\mathcal{P}}
\renewcommand{\Xi}{\mathcal{X}}
\let\div\relax
\DeclareMathOperator*{\div}{div}
\DeclarePairedDelimiter\abs{\lvert}{\rvert}
\DeclarePairedDelimiter\norm{\lVert}{\rVert}
\let\oldabs\abs
\def\abs{\@ifstar{\oldabs}{\oldabs*}}
\let\oldnorm\norm
\def\norm{\@ifstar{\oldnorm}{\oldnorm*}}
\def\dashint{\,\ThisStyle{\ensurestackMath{
\stackinset{c}{.2\LMpt}{c}{.5\LMpt}{\SavedStyle-}{\SavedStyle\phantom{\int}}
}
\setbox0=\hbox{$\SavedStyle\int\,$}\kern-\wd0}\int}
\title{Global strong solutions and asymptotic behavior for arbitrarily large initial data of the 2D compressible Navier-Stokes equations with transport entropy}
\date{}
\author{
\bf\large  Jie Fan$^{a}$, Xiangdi Huang$^{a,*}$\\
\small a. State Key Laboratory of Mathematical Sciences, Academy of Mathematics and Systems Science\\
\small Chinese Academy of Sciences, Beijing 100910, P.R.China;\\
\footnote{*Email addresses: fj0828@outlook.com\,\, (J. Fan); xdhuang@amss.ac.cn\,\, (X. Huang);   } }
\begin{document}
\maketitle

\begin{abstract}
In 1995, Kazhikhov and Vaigant introduced a particular class of isentropic compressible Navier-Stokes equations with variable viscosity coefficients and, for the first time, established the existence of global smooth solutions for arbitrarily large initial data in bounded two-dimensional domains. This result was subsequently extended and refined to accommodate more general constraints on the viscosity coefficients. However, because the proofs in this line of work [Jiu-Wang-Xin, J. Math. Fluid Mech 16 :483–521, 2014; Huang-Li, J. Math. Pures Appl 106:123–154, 2016; Huang-Li, SIAM J. Math. Anal 54 (3):3192–3214, 2022; Fan-Li-Li,  Arch. Ration. Mech. Anal 245 (1):239–278, 2022] rely heavily on the structure of the isentropic equations, they could not be generalized to the broader setting of multidimensional compressible heat-conductive Navier-Stokes–Fourier systems. In this paper, we consider a special class of non-isentropic compressible fluids governed by the two-dimensional compressible Navier-Stokes equations with variable entropy. In this system, the pressure depends nonlinearly on both density and entropy, and the entropy evolves solely through a transport equation-a feature that distinguishes it from the standard Navier-Stokes–Fourier model. We establish, for the first time, the global existence of strong solutions for arbitrarily large initial data on both two-dimensional periodic domains and bounded domains endowed with Navier-slip boundary conditions. For the bounded-domain case, a key step in our analysis is the derivation of new commutator estimates compatible with the slip condition. Our results hold even when the initial density may contain vacuum and require no smallness assumption on the initial data, provided the shear viscosity is constant and the bulk viscosity follows a power-law form $\lambda(\rho)=\rho^\beta$ with $\beta > 4/3$. Moreover, we demonstrate that the density remains uniformly bounded for all time. Consequently, the solution converges to an equilibrium state as time tends to infinity

. \\[4mm]
{\bf Keywords:} global strong solutions; entropy transport; large initial data;\\[4mm]
{\bf Mathematics Subject Classifications (2010):} 76N10, 35B45.\\[4mm]
\end{abstract}
% ============= 添加的目录命令 =============
\tableofcontents
\newpage
% ========================================

\section{Introduction}
In this paper, we study the compressible Navier–Stokes equations with entropy transport. This model describes flows of compressible viscous fluids with variable entropy and arises naturally in meteorological applications as a limit system of the full Navier–Stokes–Fourier equations when thermal conduction is neglected and viscous heating is omitted. Such a reduced system is particularly relevant in atmospheric flow modeling and has been extensively studied in the meteorological literature; see \cite{R. Klein}. The governing system consists of the compressible Navier–Stokes equations coupled with an additional transport equation for the entropy, and reads as follows:

\begin{align}
\begin{cases}\label{trans201}
&\partial_t \rho + \operatorname{div} (\rho \mathbf{u}) = 0, \\[2mm]
&\partial_t (\rho \mathbf{u}) + \operatorname{div} (\rho \mathbf{u} \otimes \mathbf{u}) 
- \mu \Delta \mathbf{u} - \nabla((\mu+\lambda)\text{div}u)
+ \nabla P(\rho, s) = 0, \\[2mm]
&\partial_t (\rho s) + \operatorname{div} (\rho s \mathbf{u}) = 0, \\[2mm]
&(\rho, \mathbf{u}, s)|_{t=0} = (\rho_0, \mathbf{u}_0, s_0).
\end{cases}
\end{align}
where $\rho=\rho(x,t),$ and $\mathbf{u}=(u_1(x,t),u_2(x,t)),$ and $s=s(x,t)$ represent the velocity field and the entropy, respectively. The pressure state equation is given by
\begin{equation}\label{pressurestate}
P(\rho, s) = \rho^\gamma \mathcal{T}(s), \quad\gamma > 1, 
\end{equation}
where $\mathcal{T}(\cdot):\mathbb{R}+ \to \mathbb{R}+$ is a smooth, strictly monotone function, positive for all $s > 0$.

Regarding the viscosity coefficients, the shear viscosity \(\mu\) is taken as a positive constant, while the bulk viscosity \(\lambda\) is assumed to depend on the density as a power law:
\begin{equation}
\mu = \text{const} > 0, \quad \lambda(\rho) = b \rho^{\beta}, \quad b > 0, \quad \beta > 0. \end{equation}
For simplicity and without loss of generality, we normalize \(a = b = 1\) throughout the paper.

We consider two types of domains:
\begin{itemize}
    \item Let $\Omega \subset \mathbb{R}^{2}$ be a bounded, simply connected $C^{2,1}$ domain.  
We impose the Navier-slip boundary conditions:
\begin{equation}
\mathbf{u} \cdot n = 0, \qquad \operatorname{rot} \mathbf{u} = -K \mathbf{u} \cdot n^{\perp} \quad \text{on } \partial\Omega,
\end{equation}
where $K \geq 0$ is a smooth function defined on $\partial\Omega$, $n = (n_1, n_2)$ denotes the unit outer normal vector, and $n^{\perp} = (n_2, -n_1)$ is the corresponding unit tangent vector.
    \item the periodic domain: $\mathbb{T}^2= (0,1) \times (0,1)$. 
\end{itemize}

 The mathematical analysis of stability for Navier-Stokes-type systems with transport mechanisms originated with the pioneering work of Lions \cite{plions}, who established conditional stability in three dimensions under the technical assumption that $\gamma \geq 9/5$. 
 In the case of $P(\rho,\bar{s}) = (\rho\bar{s})^\gamma$ with $\gamma > \frac{3}{2}$, Mich\'{a}lek \cite{michalek2015} studied the stability of weak solutions to \eqref{trans201}. Maltese et al. \cite{maltese2016} later extended these stability results to more general pressure laws. Contemporary research has produced significant refinements in solution concepts. 
 Luk\'{a}\v{c}ov\'{a}-Medvid'ov\'{a} and Sch\"{o}mer \cite{lukacova2022} introduced the concept of dissipative solutions for the system in bounded domains $\Omega \subset \mathbb{R}^d$ ($d=2,3$), proving their existence for the full range $\gamma \geq 1$. This represents a considerable improvement over earlier $\gamma$-dependent constraints. For the pressure law $P(\rho,\bar{s}) = (\rho\bar{s})^\gamma$ in the whole space, Zhai, Li, and Zhou \cite{zhai2023} proved the global well-posedness of strong solutions for $\gamma > 1$.

For technical reasons, we begin by proving the corresponding results for the system derived from \eqref{trans2} through the variable transformation $Z \equiv \rho \mathcal{T}^{1/\gamma}(s) = P^{1/\gamma}$:
\begin{align}
\begin{cases}\label{trans1}
&\partial_t \rho + \operatorname{div} (\rho \mathbf{u}) = 0, \\[2mm]
&\partial_t (\rho \mathbf{u}) + \operatorname{div} (\rho \mathbf{u} \otimes \mathbf{u}) 
- \mu \Delta \mathbf{u} - \nabla((\mu+\lambda)\text{div}u)
+ \nabla P(Z) = 0, \\[2mm]
&\partial_t Z + \operatorname{div} (Z \mathbf{u}) = 0, \\[2mm]
&(\rho, \mathbf{u}, Z)|_{t=0} = (\rho_0, \mathbf{u}_0, Z_0).
\end{cases}
\end{align}
and pressure now reads
\begin{equation}
P(Z)=a Z^{\gamma},\,\,a>0,\,\,\gamma>1,
\end{equation}
Without loss of generality, we take $a=1.$

	\begin{thm}\label{RELXMHD221}
Suppose the initial data $(\rho_0,m_0,Z_0)$ satisfy some $q > 2$
		\begin{equation}\label{ia}
		0\leq c_{*}\rho_0\leq Z_0\leq c^{*}\rho_0,\,\,\mathbf{u}_0\in H^1,\,\,m_0=\rho_0 u_0,\, \, (\rho_0,Z_0)\in W^{1,q}.
		\end{equation}
and $(\rho,\mathbf{u}, Z)$ is the unique strong solution presented in Theorem \ref{th1} to system \eqref{trans1}.
 If the maximal existence time $T^{\ast}$ is finite, then there holds
\begin{equation}
\begin{aligned}\label{88888803}
\limsup_{t\rightarrow T^*}\Big(\|\rho\|_{L^{\infty}(0,t;L^{\infty}(\Omega))}+\|Z\|_{L^{\infty}(0,t;L^{\infty}(\Omega))}\Big)= \infty,
\end{aligned}
\end{equation}
\end{thm}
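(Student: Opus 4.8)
\emph{Overall strategy.} This is a continuation (blow-up) criterion, so I would argue by contradiction together with the standard continuation argument. Suppose $T^{\ast}<\infty$ but
\[
\sup_{t\in[0,T^{\ast})}\bigl(\|\rho\|_{L^{\infty}(\Omega)}+\|Z\|_{L^{\infty}(\Omega)}\bigr)\le M<\infty .
\]
The plan is to show that under this assumption every norm entering the definition of the strong solution of Theorem \ref{th1} --- in particular $\|\rho\|_{W^{1,q}}$, $\|Z\|_{W^{1,q}}$, $\|\mathbf{u}\|_{H^{1}}$, together with $\int_{0}^{T^{\ast}}\|\mathbf{u}\|_{H^{2}}^{2}\,dt$ and the weighted quantities built from $\sqrt{\sigma}\,\sqrt{\rho}\,\dot{\mathbf{u}}$ and $\sqrt{\sigma}\,\nabla\dot{\mathbf{u}}$, where $\sigma(t):=\min\{1,t\}$ and $\dot f:=f_{t}+\mathbf{u}\cdot\nabla f$ --- remains bounded uniformly on $[0,T^{\ast})$. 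Granting this, one passes to the limit $t\uparrow T^{\ast}$, obtains admissible initial data at $t=T^{\ast}$, and invokes the local existence part of Theorem \ref{th1} to continue the solution past $T^{\ast}$, contradicting the maximality of $T^{\ast}$. One preliminary remark used throughout: since $\rho$ and $Z$ satisfy the same continuity-type equation, the quotient $Z/\rho$ is transported by the flow, so $c_{*}\le Z/\rho\le c^{*}$ is preserved; hence a bound on $\rho$ is equivalent to a bound on $Z$, and all $L^{p}$ norms of both are then controlled on the bounded domain.

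\emph{First two layers of a priori estimates.} I would begin with the basic energy identity obtained by testing the momentum equation with $\mathbf{u}$, which gives $\sup_{t}\|\sqrt{\rho}\,\mathbf{u}\|_{L^{2}}^{2}+\int_{0}^{T^{\ast}}\|\nabla\mathbf{u}\|_{L^{2}}^{2}\,dt\le C$. The key structural object for everything beyond this is the effective viscous flux $F:=(2\mu+\lambda(\rho))\operatorname{div}\mathbf{u}-P(Z)$ together with the vorticity $\omega:=\partial_{1}u_{2}-\partial_{2}u_{1}$, in terms of which the momentum equation reads $\rho\dot{\mathbf{u}}=\nabla F-\mu\nabla^{\perp}\omega$, so that $\Delta F=\operatorname{div}(\rho\dot{\mathbf{u}})$ and $\mu\Delta\omega=\operatorname{rot}(\rho\dot{\mathbf{u}})$. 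Testing the momentum equation against $\dot{\mathbf{u}}$ and exploiting these elliptic identities along with the uniform bound on $\rho$, I would derive
\[
\sup_{t\in[0,T^{\ast})}\|\nabla\mathbf{u}(t)\|_{L^{2}}^{2}+\int_{0}^{T^{\ast}}\|\sqrt{\rho}\,\dot{\mathbf{u}}\|_{L^{2}}^{2}\,dt\le C ,
\]
and then, applying the material derivative to the momentum equation and testing against $\sigma\dot{\mathbf{u}}$, the second-layer weighted bound $\sup_{t}\sigma\|\sqrt{\rho}\,\dot{\mathbf{u}}\|_{L^{2}}^{2}+\int_{0}^{T^{\ast}}\sigma\|\nabla\dot{\mathbf{u}}\|_{L^{2}}^{2}\,dt\le C$. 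On $\mathbb{T}^{2}$ the integrations by parts are clean; on the bounded domain with Navier-slip conditions the same manipulations produce boundary integrals, and it is precisely here that the slip-compatible commutator estimates announced in the introduction are used to control those boundary terms.

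\emph{Density gradients, higher regularity, and conclusion.} With the (weighted) control of $\|\sqrt{\rho}\,\dot{\mathbf{u}}\|_{L^{2}}$ and $\|\nabla\dot{\mathbf{u}}\|_{L^{2}}$, elliptic regularity for the $F$- and $\omega$-equations bounds $\|\nabla\mathbf{u}\|_{L^{p}}$ for every finite $p$ and, through a logarithmic Sobolev-type inequality, controls $\|\nabla\mathbf{u}\|_{L^{\infty}}$ by $\log\!\bigl(e+\|\nabla^{2}\mathbf{u}\|_{L^{q}}\bigr)$; elliptic regularity for the momentum equation in turn bounds $\|\nabla^{2}\mathbf{u}\|_{L^{q}}$ by $\|\rho\dot{\mathbf{u}}\|_{L^{q}}$ plus $\|\nabla P(Z)\|_{L^{q}}\le C\|\nabla Z\|_{L^{q}}$. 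Differentiating the continuity equations for $\rho$ and $Z$ and estimating in $L^{q}$ gives
\[
\frac{d}{dt}\bigl(\|\nabla\rho\|_{L^{q}}+\|\nabla Z\|_{L^{q}}\bigr)\le C\bigl(1+\|\nabla\mathbf{u}\|_{L^{\infty}}\bigr)\bigl(\|\nabla\rho\|_{L^{q}}+\|\nabla Z\|_{L^{q}}\bigr)+C\|\nabla^{2}\mathbf{u}\|_{L^{q}} ,
\]
and feeding in the logarithmic bound for $\|\nabla\mathbf{u}\|_{L^{\infty}}$ closes a Gronwall argument, yielding $\sup_{[0,T^{\ast})}\bigl(\|\nabla\rho\|_{L^{q}}+\|\nabla Z\|_{L^{q}}\bigr)\le C$, hence $\int_{0}^{T^{\ast}}\|\nabla\mathbf{u}\|_{L^{\infty}}\,dt<\infty$ and $\int_{0}^{T^{\ast}}\|\mathbf{u}\|_{H^{2}}^{2}\,dt<\infty$; the bounds on $\rho_{t}$, $Z_{t}$ then follow from the continuity equations. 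This supplies all the regularity needed to re-initialize Theorem \ref{th1} at $t=T^{\ast}$, completing the contradiction. I expect the main obstacle to be the bounded-domain case: pushing the $\dot{\mathbf{u}}$- and $\sigma\dot{\mathbf{u}}$-estimates through the boundary terms forced by the Navier-slip condition, and doing so while $\lambda(\rho)=\rho^{\beta}$ is only H\"{o}lder-continuous in $x$ rather than Lipschitz, which is what makes the effective-viscous-flux elliptic analysis and the associated slip-compatible commutator estimates the technical heart of the proof.
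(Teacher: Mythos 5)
Your proposal follows essentially the same route as the paper: argue by contradiction using the lower bound on the local-existence time, show that a uniform $L^\infty$ bound on $(\rho,Z)$ on $[0,T^*)$ propagates (via the effective viscous flux $G=(2\mu+\lambda)\operatorname{div}\mathbf{u}-(P-\overline P)$ and vorticity decomposition $\rho\dot{\mathbf{u}}=\nabla G+\mu\nabla^\perp\omega$, the $\sigma$-weighted $\dot{\mathbf{u}}$-estimate, elliptic regularity, a Beale--Kato--Majda log inequality, and a logarithmic Gronwall argument) to uniform control of $\|(\rho,Z)\|_{W^{1,q}}$ and $\|\mathbf{u}\|_{H^1}$, which is precisely the content of Lemmas \ref{timehighorderestimate}--\ref{rhohigh} in the paper and contradicts blow-up. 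One small misattribution worth flagging: the slip-compatible commutator estimates of Section \ref{sec-5} bound $F=\sum[u_i,\mathcal{R}_{ij}](\rho u_j)$ and are used to derive the $L^\infty$ bound on $\rho$ via Zlotnik's inequality (Lemmas \ref{commutator estimate}, \ref{lem:commutator_Linf}), which is the hard part of the existence result; in the blow-up criterion that bound is the hypothesis, so those commutator estimates are not where the Navier-slip boundary terms in the $\dot{\mathbf{u}}$-energy estimate are absorbed.
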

With the blow-up criterion established, we now present our second theorem, which addresses the global existence and large-time behavior of strong solutions.
\begin{thm}\label{th1}
		Let $\Omega \subset \mathbb{R}^2$ be either a simply connected bounded domain with $C^{2,1}$ boundary $\partial\Omega$, or $\Omega = \mathbb{T}^2 = [0,1] \times [0,1]$ be the 2D periodic torus. Let $0 < c_* \leq c^* < \infty$ be constants, and assume  that
\begin{equation}\label{beta43}
\beta>\frac{4}{3},\,\,\gamma>1.
\end{equation}
    Suppose the initial data satisfy \eqref{ia}
        and then there exists a unique global strong solution $(\rho,\mathbf{u},Z)$ to system \eqref{trans1} in $\Omega\times(0,\infty)$, which satisfies:
		\begin{equation}\label{theoremregularity}
			\left\{ \begin{array}{l}
				(\rho,Z)\in C([0,T];W^{1,q}), \ (\rho_t,Z_t)\in L^{\infty}((0,\infty);L^2),\\
				 \mathbf{u} \in L^{\infty}((0,T);H^1)\cap L^{\frac{q+1}{q}}((0,T);W^{2,q}), \\
				t^{\frac{1}{2}}\mathbf{u}\in L^2((0,T);W^{2,q}), t^{\frac{1}{2}}\mathbf{u}_t
				\in L^2((0,T);H^1),\\
                \rho \mathbf{u}\in C([0,T];L^2),\,\sqrt{\rho}\mathbf{u}_t\in L^2(\mathbb{T}^2\times(0,T)),
			\end{array} \right.
		\end{equation}
Moreover, if 
\begin{equation}\label{beta23}
\beta>\frac{3}{2}, 
\end{equation}
    there exists a constant $C > 0$, depending only on $\beta, \gamma, \mu, \|\rho_0\|_{L^\infty}, \|Z_0\|_{L^\infty}$ and $\|\mathbf{u}_0\|_{H^1}$, such that
    \begin{equation}
    \sup_{0 \le t < \infty} (\|\rho(\cdot, t)\|_{L^{\infty}}+ \|Z(\cdot, t)\|_{L^{\infty}})\le C, 
    \end{equation}
    and the following large-time behavior holds:
    \begin{equation}\label{eq:long-time}
    \lim_{t\to\infty}(\|\rho-\overline{\rho_0}\|_{L^p}+\|Z-\overline{Z_0}\|_{L^p}+\|\nabla \mathbf{u}\|_{L^p})=0,
    \end{equation}
    for any $p\in[1,\infty).$
	\end{thm}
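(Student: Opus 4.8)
The plan is to prove Theorem~\ref{th1} by a continuity argument: we establish a priori estimates on the strong solution that are uniform on any finite time interval, so that by the blow-up criterion of Theorem~\ref{RELXMHD221} the solution extends globally; then we upgrade the estimates to be uniform on $[0,\infty)$ to obtain the uniform density bound and the decay. The first and most basic ingredient is the energy identity: multiplying the momentum equation by $\mathbf{u}$ and using the transport equations for $\rho$ and $Z$ gives conservation of mass $\int\rho = \int\rho_0$, conservation of $\int Z = \int Z_0$ (hence control of $\|Z\|_{L^1}$ and, via the transport structure, $\|Z\|_{L^\gamma}$-type quantities), and the bound $\sup_t \int (\tfrac12\rho|\mathbf{u}|^2 + \tfrac{1}{\gamma-1}Z^\gamma) + \int_0^t\int(\mu|\nabla\mathbf{u}|^2 + (\mu+\lambda)(\operatorname{div}\mathbf{u})^2) \le E_0$. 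The comparison principle $c_*\rho \le Z \le c^*\rho$ is propagated along the flow (both $\rho$ and $Z$ solve the same continuity equation with the same $\mathbf{u}$), so all pressure-type quantities are controlled by powers of $\rho$, and the problem is structurally close to the Kazhikhov–Vaigant isentropic system with pressure $\sim\rho^\gamma$ and bulk viscosity $\rho^\beta$.

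The heart of the matter is the $L^\infty$ bound on the density. Following the Vaigant–Kazhikhov strategy as refined in the references cited in the abstract, I would introduce the effective viscous flux $F = (2\mu+\lambda(\rho))\operatorname{div}\mathbf{u} - P(Z)$ and the vorticity $\omega = \operatorname{rot}\mathbf{u}$, derive the elliptic identities $\Delta F = \operatorname{div}(\rho\dot{\mathbf{u}})$ and $\mu\Delta\omega = \operatorname{rot}(\rho\dot{\mathbf{u}})$, and then estimate a carefully chosen functional of the density — typically something like $\int \rho^\alpha |\nabla \log(2\mu+\lambda(\rho))|^2$ or an $L^m$ bound on $\rho$ with $m\to\infty$ — by testing the mass equation rewritten through $F$. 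The condition $\beta > 4/3$ enters precisely here: it is what makes the "bad" nonlinear terms absorbable after interpolating between the energy estimate and higher integrability of $\rho$ obtained by a bootstrap (the classical Vaigant–Kazhikhov argument gives $\rho\in L^\infty_t L^m_x$ for all finite $m$ with bounds growing in $m$, and then a logarithmic/Moser iteration or a Zlotnik-type ODE argument closes the $L^\infty$ bound). For the bounded domain with Navier-slip conditions, the extra difficulty flagged in the abstract is that the boundary terms in the $F$- and $\omega$-equations and in the $W^{2,q}$ elliptic estimates no longer vanish; I would handle these via the new commutator estimates the authors promise, writing $\rho\dot{\mathbf{u}} = \mu\nabla^\perp\omega - \nabla F + \text{(curvature/commutator terms)}$ compatible with $\mathbf{u}\cdot n=0$, $\operatorname{rot}\mathbf{u} = -K\mathbf{u}\cdot n^\perp$.

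Once $\|\rho\|_{L^\infty} + \|Z\|_{L^\infty}$ is bounded on $[0,T]$ for every $T$, Theorem~\ref{RELXMHD221} rules out finite-time blow-up and gives global existence. The remaining higher-order regularity in \eqref{theoremregularity} follows the standard route: with $\rho$ bounded one gets $\sqrt{\rho}\mathbf{u}_t\in L^2_{t,x}$ and $\nabla\mathbf{u}\in L^\infty_t L^2_x$ by differentiating the momentum equation in time and testing with $\mathbf{u}_t$ (the vacuum-compatible weighted estimate, with time-weights $t^{1/2}$ absorbing the lack of $H^2$ control at $t=0$), then $\mathbf{u}\in L^{(q+1)/q}_t W^{2,q}_x$ from the Lamé elliptic estimate, and $(\rho,Z)\in C([0,T];W^{1,q})$ by transporting the $W^{1,q}$ norm along the flow using $\nabla\mathbf{u}\in L^1_t L^\infty_x$. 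Uniqueness is a standard energy estimate on the difference of two solutions in the class \eqref{theoremregularity}.

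For the large-time behavior under the stronger hypothesis $\beta > 3/2$, I would first show the dissipation integrals are finite on $[0,\infty)$ and then establish a decay rate — the cleanest path is to prove $\int_0^\infty (\|\nabla\mathbf{u}\|_{L^2}^2 + \|\nabla\mathbf{u}\|_{L^2}^4)\,dt < \infty$ together with uniform-in-time bounds, so that $\|\nabla\mathbf{u}(t)\|_{L^2}\to 0$; the improved threshold $\beta>3/2$ is what yields the time-uniform (rather than merely finite-interval) higher-order bounds needed to make the energy dissipation integrable over all of $[0,\infty)$. With $\|\nabla\mathbf{u}\|_{L^2}\to 0$ and the uniform $L^\infty$ bound on $\rho, Z$, one deduces via the continuity equation and Poincaré–Sobolev that $\rho$ and $Z$ relax to their (conserved) spatial averages $\overline{\rho_0}$, $\overline{Z_0}$ in every $L^p$, $p<\infty$, giving \eqref{eq:long-time}; the convergence of $\nabla\mathbf{u}$ in $L^p$ then follows by interpolating the $L^2$ decay against the uniform higher integrability. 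The main obstacle throughout is the density $L^\infty$ bound — specifically, propagating the Vaigant–Kazhikhov-type iteration through the Navier-slip boundary while keeping every constant independent of $T$ in the $\beta>3/2$ regime; everything downstream is comparatively routine given that estimate.
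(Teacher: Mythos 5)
Your proposal correctly identifies the high-level architecture (energy estimate, propagation of the pointwise equivalence $c_*\rho\le Z\le c^*\rho$, Vaigant--Kazhikhov effective-flux estimates, Zlotnik ODE to close the density $L^\infty$ bound, then routine higher-order regularity), and it matches the paper's route in those respects. However, there are two concrete gaps.

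First, the vacuum approximation is missing and cannot be skipped. Your plan is to take ``the strong solution'' and extend it by the blow-up criterion, but the local existence theory (Lemma~\ref{lem:local-existence}) requires $\inf_\Omega \rho_0>0$, while the hypothesis \eqref{ia} permits $\rho_0$ (and hence $Z_0$) to vanish. The paper therefore mollifies and lifts the initial data, $\rho_0^{\delta,\eta}=j_\delta*\rho_0+\eta$, $Z_0^{\delta,\eta}=j_\delta*Z_0+A_0^\delta\eta$, produces global solutions for each $(\delta,\eta)$, and passes to the limit. Crucially, all the a priori estimates depend on the constants $c_*,c^*$ in $c_*\le Z/\rho\le c^*$, so the approximation must be chosen so that $c_*\le Z_0^{\delta,\eta}/\rho_0^{\delta,\eta}\le c^*$ uniformly in $(\delta,\eta)$; this forces the introduction of the auxiliary transported quantity $A$ (equation \eqref{A}) to define $\theta=Z/\rho$ consistently across vacuum regions. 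Without this step, your strategy never produces a solution to estimate when $\rho_0$ has a nontrivial vacuum set. Your alternative observation that $Z-c_*\rho$ and $c^*\rho-Z$ both satisfy the continuity equation and are nonnegative at $t=0$ is a reasonable way to propagate the equivalence once a solution exists, but it does not replace the construction.

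Second, for the $\beta>3/2$ time-independent bound, you invoke ``the classical Vaigant--Kazhikhov argument gives $\rho\in L^\infty_t L^m_x$ for all finite $m$'' and then a Zlotnik argument. The classical V--K bound is time-dependent; feeding it into Zlotnik's lemma only reproduces the time-dependent bound of Proposition~\ref{propdependentt} ($\beta>4/3$). To obtain a constant independent of $T$ the paper uses a different mechanism (Lemma~\ref{lem:Phi-bound}, after \cite{fanliwang}): one estimates $\int\rho f^\alpha\,dx$ with the truncation $f=(\theta(\rho)+\psi-M)_+$, $\theta(\rho)=2\mu\log\rho+\tfrac1\beta\rho^\beta$, so that the damping term $\int\rho P(Z)f^{\alpha-1}$ on the left of \eqref{equ4.1} absorbs the remaining contributions; the truncation at level $M$ is what lets $\log\rho$ be dominated either by $\psi$ (small $\rho$) or by $\rho^\beta$ (large $\rho$). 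This yields time-uniform $L^{\alpha\beta+1}$ bounds on $\rho$, which then drive the Zlotnik closure $R_T^\beta\le C R_T^{\max\{3/2,(1+\varepsilon)\beta/\gamma\}}$. Your sketch does not exploit the pressure damping and would not produce $T$-independent constants; consequently the uniform $\|\rho\|_{L^\infty}$ bound and hence the large-time behavior \eqref{eq:long-time} would not follow. Finally, you attribute the role of $\beta>3/2$ to ``time-uniform higher-order bounds''; the actual dichotomy lies one level lower, in the $L^q$ estimate of the momentum $\rho\mathbf{u}$ that feeds the commutator $\|F\|_{L^\infty}$ bound: the time-dependent version costs a smaller power of $R_T$ (giving $\beta>4/3$), the time-independent version a larger one (giving $\beta>3/2$).
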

\begin{rmk}
Our result appears to provide the first discussion on the global strong solutions for the non-isentropic compressible Navier–Stokes equations with large initial data.
\end{rmk}
\begin{rmk}
In contrast to the result \cite{huanglijmpa}, our analysis removes the restriction $1<\beta<4\gamma-3$ in the large-time behavior.
\end{rmk}
\begin{rmk}
The key objective of this paper is to establish the uniform control between $Z$ and $\rho$:
\begin{equation}
\begin{aligned}\label{equi}
c_{*}\rho \leq Z \leq c^{*}\rho \qquad \text{a.e. in } (0,T)\times\Omega. 
\end{aligned}
\end{equation}
For smooth solutions with strictly positive density, the entropy equation $\eqref{trans1}_3$ can be transformed, via the continuity equation $\eqref{trans1}_1$, into the pure transport equation:
\begin{align}
\begin{cases}\label{trans21}
&\partial_t \rho + \operatorname{div} (\rho \mathbf{u}) = 0, \\[2mm]
&\partial_t (\rho \mathbf{u}) + \operatorname{div} (\rho \mathbf{u} \otimes \mathbf{u}) 
- \mu \Delta \mathbf{u} - \nabla((\mu+\lambda)\text{div}u)
+ \nabla P(\rho, s) = 0, \\[2mm]
&\partial_t  s + u\cdot{\nabla s} = 0, \\[2mm]
&(\rho, \mathbf{u}, s)|_{t=0} = (\rho_0, \mathbf{u}_0, s_0).
\end{cases}
\end{align}
For subsequent convenience, we define
\begin{equation}
\begin{aligned}\label{thetas}
\theta \equiv \mathcal{T}(s) ^{\frac{1}{\gamma}}.
\end{aligned}
\end{equation}
Provided that $\rho_0>0$, we can define $\theta_0 = Z_0/\rho_0$ through condition \eqref{ia}. From \eqref{thetas} and $\eqref{trans21}_3$ it follows that $\theta$ satisfies the same transport equation:
\begin{equation}
\begin{aligned}\label{theta}
\partial_t  \theta + u\cdot{\nabla \theta} = 0,
\end{aligned}
\end{equation}
and  it inherits the uniform bounds $c_* \leq \theta \leq c^*$ for all time. Combining the state equation $Z = \rho \mathcal{T}^{1/\gamma}(s) = \rho \theta$ with the uniform bounds on $\theta$, inequality \eqref{equi} follows directly.

When vacuum occurs, condition \eqref{ia}  gives $Z_0 = 0$ whenever $\rho_0 = 0$. 
Hence $\theta$ cannot be defined directly through $\frac{Z_0}{\rho_0}$ in such regions. 
In the subsequent approximation procedure, constructing a well‑behaved approximation for $\theta$ would otherwise fail. 
To resolve this difficulty, we introduce an auxiliary function $A(t,x)$. 
Its initial value is taken as $A_0 =\theta_0= [\mathcal{T}(s_0)]^{\frac{1}{\gamma}}$, which is well-defined on all of $\Omega$ because $s_0$ is prescribed globally. 
We require $A$ to satisfy the same transport law
\begin{equation}
\begin{aligned}\label{A}
\partial_t A + \mathbf{u}\cdot\nabla A = 0, \qquad A(0,\cdot)=A_0. 
\end{aligned}
\end{equation}
Thus, $A$ is also transported along the particle trajectories.

In the approximation step (see \eqref{appro}), this assignment supplies a reasonable value for $\theta$ precisely where $\rho_0=Z_0=0$. 
Hence we define $\theta$ by
\begin{align}
\theta_0 = [\mathcal{T}(s_0)]^{\frac{1}{\gamma}}=
\begin{cases}
\displaystyle \frac{Z_0}{\rho_0}, & \rho_0>0, \\[8pt]
A_0, & \rho_0=0,
\end{cases}
\end{align}
and $\theta$ satisfies the transport equation \eqref{theta}. 
Because $A$ (and hence $\theta$ where $\rho=0$) inherits the bounds $c_* \leq A_0 \leq c^*$, we obtain globally
\begin{equation}
\begin{aligned}
c_* \leq \theta \leq c^*. 
\end{aligned}
\end{equation}

\end{rmk}

The proof of Theorem~\ref{RELXMHD221} follows a strategy similar to that used for the isentropic compressible Navier-Stokes equations; below, we only outline the main ideas of the proof.
\noindent\textbf{1. Blow-up Criterion.} 
We now sketch the main ideas behind the proofs of Theorems~\ref{RELXMHD221} and~\ref{th1}.
From the local existence theory, there exists a continuous decreasing function $\mathcal{I}:[0,\infty) \to (0,\infty)$ such that
\begin{equation}
T^* \geq \mathcal{I}(D_0), \quad D_0 = \max\left\{ \| (\varrho_0, Z_0) \|_{W^{1,q}}, u_0\in H^1\right\}.
\end{equation}
Thus, if $T^* < \infty$, then necessarily
\begin{equation}\label{kq1}
\limsup_{t \uparrow T^*} \left( \| ( \varrho, Z)(t) \|_{W^{1,q}} +\|\mathbf{u}\|_{H^1}+ \|\rho\|_{L^{\infty}} + \| Z \|_{L^{\infty}} \right) = \infty.
\end{equation}
Assume $T^* < \infty$ but 
\begin{equation}\label{kq}
\sup_{t \in [0, T^*)} \left( \| \rho \|_{L^{\infty}} + \| Z \|_{L^{\infty}} \right) \leq K,
\end{equation}
for some $K > 0$. We will show that \eqref{kq} implies
\begin{equation}
\sup_{t \in [0, T^*)} \left( \| (\rho, Z)(t) \|_{W^{1,q}}+\|u\|_{H^1}+ \|\rho \|_{L^{\infty}} + \| Z \|_{L^{\infty}} \right) \leq K',
\end{equation}
for some $K' > 0$, contradicting \eqref{kq1}. Therefore, we are able to prove \eqref{88888803}.

\vspace{1em}
\noindent\textbf{2. Main Strategy of the Proof Theorem \ref{th1}}
\noindent
\textbf{Proof of Theorem \ref{th1}.}

With the blow-up criterion established, we now proceed to prove Theorem \ref{th1}. The main strategy of the proof can be summarized as follows.

The primary goal is to derive upper bounds for the density $\rho$ and $Z$. We first establish an equivalence between $\rho$ and $Z$, which reduces the problem to obtaining an upper bound solely for the density.
The proof distinguishes two cases according to the range of parameters $\beta$ and $\gamma$.

\noindent\textbf{2.1 Time-independent upper bound for $\rho$ (for $\beta>\frac{3}{2}$).}

Under these stronger conditions, we obtain $\sup_{t\ge 0}\|\rho\|_{L^\infty} \le C$, which is essential for large-time behavior analysis. The strategy differs from the previous case:
\begin{itemize}
    \item Following the idea in \cite{fanliwang}, we make full use of the damping effect of the pressure. By establishing a key inequality 
\begin{equation}
\begin{aligned}
\frac{d}{dt}\int_{\mathbb{T}^2}\rho f^{\alpha}\,dx
   +\int_{\mathbb{T}^2}\rho(f+M)^{\frac{\gamma}{\beta}}f^{\alpha-1}\,dx\leq \text{(other terms)}
\end{aligned}
\end{equation}
and introducing a truncation technique, we ensure that $(f+M)^{\frac{\beta}{\gamma}}$ can be controlled by $\rho^{\gamma}$ together with regular terms. The underlying idea is the following: after a suitable truncation, $|\log\rho|$ is dominated by regular terms when the density is small, whereas it is controlled by $\rho^{\beta}$ when the density is large. Consequently, we obtain
\begin{equation}
(f+M)^{\frac{\beta}{\gamma}} \le C\rho^{\gamma} + \text{(regular terms)} \le CP + \text{(regular terms)},
\end{equation}
where $P$ denotes the pressure term. Exploiting the damping property of the pressure term $P$ to absorb the remaining terms, we finally achieve a time-independent $L^p$ estimate for the density.
\item The key step in our proof lies in establishing a uniform upper bound for the density. We first show, by combining energy-type estimates with the $L^p$-bounds of the density, that the logarithmic term $\log(1+\|\nabla \mathbf{u}\|_{L^2})$ can be controlled by a polynomial function of the $L^\infty$-norm of the density, and further reduce the power dependence of this logarithmic term on the density upper bound (see \eqref{logestimate}). Building upon this, we employ the $W^{1,p}$-estimate for the commutator due to Coifman--Meyer to derive that the $L^\infty$-norm of the commutator term $F$ depends in a controllable manner on the $L^\infty$-norm of the density. Finally, via a Zlotnik inequality, we obtain a global upper bound for the density that is independent of the lower bound of the initial density, thereby completing the proof.
\end{itemize}
\noindent\textbf{2.2 Time-dependent upper bound for $\rho$ (for $\beta > 4/3$).}

Under the conditions $\beta > 4/3$ and $\gamma > 1$, we establish the time-dependent upper bound $\sup_{0 \le t \le T} \|\rho\|_{L^\infty} \le C(T)$. A key step in this analysis is to obtain the estimate $\sup_{0 \le t \le T} \int_{\mathbb{T}^2} \rho|\mathbf{u}|^{2+\nu}\,dx$, from which we derive control of $\|\rho \mathbf{u}\|_{L^q}$ for sufficiently large $q$. This $L^q$-bound of the momentum is essential for estimating the commutator term $F$. When the constants are allowed to depend on time $T$, the resulting estimate for $\|\rho \mathbf{u}\|_{L^q}$ can be expressed with a lower exponent of $R_T$ compared to the time-independent case. This relaxation explains why the time-dependent theory only requires $\beta > 4/3$, whereas a stronger condition $\beta > 3/2$ is needed for the time-independent result.

\noindent\textbf{2.3 Higher-order estimates and global existence.}

Under the given initial condition $m_0 = \rho_0 \mathbf{u}_0$ (see \eqref{ia}), the overall procedure for the high-order estimates proceeds as follows: First, by using Hoff-type time-weighted estimates for the momentum equation $\eqref{trans1}_2$, the weighted energy estimate of the material derivative $\dot{\mathbf{u}}$ is established (see Lemma \ref{timehighorderestimate}), preparing the groundwork for subsequent analysis. Then, by combining the Beale--Kato--Majda inequality with a logarithmic Gronwall-type inequality for the gradients of density and $Z$, a uniform bound on $\|\nabla \rho\|_{L^q}, \|\nabla Z\|_{L^q}$ is obtained. In this process, elliptic regularity theory is applied to translate the higher-order derivative estimates into control on the material derivative and the density gradient. Through successive steps using prior bounds and energy estimates, uniform boundedness $\|\nabla^2\mathbf{u}\|_{L^q}$ is finally achieved, thereby completing the high-order regularity estimates for the global strong solution.

\noindent\textbf{2.4 Vacuum approximation.}

 To complete the proof of Theorem \ref{th1}, it is necessary to construct a vacuum-approximation solution. It is crucial to emphasize the equivalence between $\rho$ and $Z$ in our proof. The key assumption in establishing the equivalence estimates in Section \ref{sec-4} is that $\theta_0$ has uniform lower and upper bounds. Consequently, during the subsequent vacuum approximation procedure, the approximate solution $Z_{\delta,\eta}/\rho_{\delta,\eta}$ must also remain within the interval $[c_*,c^*]$, because the estimates derived in Section \ref{sec-4} depend on the constants $c_*$ and $c^*$. This requires us to choose an appropriate approximation such that $c_* \leq \frac{Z_{\delta,\eta}}{\rho_{\delta,\eta}} \leq c^*$ holds.

The second difficulty arises from the fact that $\frac{Z}{\rho}$ becomes undefined in vacuum regions. To ensure that $\theta_{\delta,\eta}$ is well-defined even in vacuum, and to maintain consistency of the limiting function $\theta$ across both vacuum and non-vacuum regions, we define $\theta_0 = A_0$ when $\rho_0 = 0$, where $A_0$ is given by equation \eqref{A}.

\vspace{1em}
\noindent\textbf{3. Commutator Estimates}

The proof hinges on a systematic reduction of the problem to elliptic estimates, leveraging both the two-dimensional topology and the specific boundary conditions imposed on the velocity field.  

The first step consists in introducing two auxiliary harmonic functions $f_1$ and $f_2$, defined through Neumann problems:

\begin{equation}
\begin{aligned}
\begin{cases}
\Delta f_1 = \mathrm{div}\,\mathrm{div} (\rho \mathbf{u} \otimes \mathbf{u}) &\quad \text{in } \Omega,\\[4pt]
\displaystyle \frac{\p f_1}{\p n} = k\rho|\mathbf{u}|^{2},&\quad \text{on } \partial\Omega,
\end{cases}
\qquad
\begin{cases}
\Delta f_{2} = \mathrm{div}(\rho \mathbf{u}) &\quad \text{in } \Omega,\\[4pt]
\displaystyle \frac{\p f_{2}}{\p n}_{\p\Omega} = 0,&\quad \text{on } \partial\Omega.
\end{cases}
\end{aligned}
\end{equation}

These Neumann conditions are compatible with the \textbf{Navier-slip boundary condition} $\mathbf{u} \cdot n = 0$, which ensures that the boundary terms arising in the subsequent analysis are well-defined.

In a simply connected two-dimensional domain, a divergence-free vector field can be expressed as the rotated gradient of a scalar function:

\begin{equation}
\begin{aligned}
\nabla f_1 - \operatorname{div}(\rho \mathbf{u} \otimes \mathbf{u}) &= \nabla^\perp \psi_1, \\
\nabla f_2 - \rho \mathbf{u} &= \nabla^\perp \psi_2 .
\end{aligned}
\end{equation}

The Navier-slip condition implies $\nabla^\perp \psi_j \cdot n = 0$ on $\partial\Omega$, allowing us to impose homogeneous Dirichlet conditions $\psi_j = 0$ on $\partial\Omega$. Consequently, $\psi_1$ and $\psi_2$ solve Poisson equations:

\begin{equation}
\begin{aligned}
\Delta \psi_1 &= -\nabla^\perp \cdot \operatorname{div}(\rho \mathbf{u} \otimes \mathbf{u}), \\
\Delta \psi_2 &= -\nabla^\perp \cdot (\rho \mathbf{u}) .
\end{aligned}
\end{equation}

These are standard elliptic problems. Applying $W^{2,p}$ regularity theory for the Laplace operator with Dirichlet boundary conditions yields:

\begin{equation}
\begin{aligned}
\|\nabla \psi_1\|_p &\lesssim \|\rho \mathbf{u} \otimes \mathbf{u}\|_p, \\
\|\nabla \psi_2\|_p &\lesssim \|\rho \mathbf{u}\|_p .
\end{aligned}
\end{equation}

Expanding the right-hand sides using the product rule and applying H\"older's inequality separates the terms into products of $\|\rho \mathbf{u}\|_{p_1}$ and $\|\nabla \mathbf{u}\|_{p_2}$. Finally, returning to the definition of $F = f_1 - \mathbf{u} \cdot \nabla f_2$ and expressing $\nabla f_1$ and $\nabla f_2$ through $\psi_1$ and $\psi_2$, we obtain the desired estimate for $\|\nabla F\|_p$.

This proof combines geometric properties of two-dimensional domains with the structural implications of Navier-slip boundary conditions to transform a nonlinear commutator into a system of elliptic boundary value problems, providing a robust framework applicable to broader contexts in fluid regularity theory.

The rest of the paper is organized as follows.
In Section \ref{sec-2}, we present a series of auxiliary lemmas. The proof of Theorem \ref{th1} in periodic domains is carried out in Section \ref{sec-4}. Section \ref{sec-5} establishes the validity of Theorem \ref{th1} for bounded domains; here, the detailed analysis is provided specifically for the commutator estimates in the bounded case.

\section{Preliminary}
\label{sec-2}

The following well-known local existence theory, where the initial density is strictly away from vacuum, can be found in \cite{lukacova2025}.

\begin{lema} \label{lem:local-existence}
    Assume that $(\rho_0, m_0, Z_0)$ satisfies
    \begin{equation} \label{eq:initial-regularity}
        (Z_0, \rho_0) \in H^2, \quad \mathbf{u}_0 \in H^2, \quad \inf_{x \in \Omega} (\rho_0(x), Z_0(x)) > 0, \quad m_0 = \rho_0 u_0.
    \end{equation}
    Then there are a small time $T > 0$ and a constant $C_0, C_1 > 0$ both depending only on $\|Z_0\|_{H^2}$, $\|\rho_0\|_{H^2}$, $\|u_0\|_{H^2}$, and $\inf \rho_0(x)$ such that there exists a unique strong solution $(\rho, \mathbf{u}, Z)$ to the problem \eqref{trans1} in $\Omega \times (0, T)$ satisfying
    \begin{equation} \label{eq:local-regularity}
        \begin{cases}
            (\rho,Z) \in C([0, T]; H^2), & (\rho_t,Z_t) \in C([0, T]; H^1), \\
            \mathbf{u} \in L^2(0, T; H^3), & \mathbf{u}_t \in L^2(0, T; H^1), \\
            \mathbf{u}_t \in L^2(0, T; H^2), & \mathbf{u}_{tt} \in L^2((0, T) \times \Omega),
        \end{cases}
    \end{equation}
    and
    \begin{equation} \label{eq:lower-bound}
\inf_{(x,t) \in \Omega \times (0,T)} \rho(x,t) \geq C_0> 0, \quad
\inf_{(x,t) \in \Omega \times (0,T)} Z(x,t) \geq C_1 > 0.
\end{equation}
\end{lema}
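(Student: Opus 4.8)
The statement to prove is the local existence Lemma \ref{lem:local-existence}, whose proof is attributed to the reference \cite{lukacova2025}. Here is how I would reconstruct it.

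\medskip

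\textbf{Plan of proof.} The strategy is the standard linearization–iteration–fixed point scheme adapted to the coupled system \eqref{trans1}, exploiting crucially that the initial density is bounded below away from vacuum. First I would set up the iteration: given $(\rho^k, \mathbf{u}^k, Z^k)$, define $\mathbf{u}^{k+1}$ as the solution of the \emph{linear} parabolic system obtained by freezing the coefficients and the convection velocity,
\begin{equation}
\rho^k \partial_t \mathbf{u}^{k+1} + \rho^k \mathbf{u}^k\cdot\nabla \mathbf{u}^{k+1} - \mu\Delta\mathbf{u}^{k+1} - \nabla\big((\mu+\lambda(\rho^k))\,\mathrm{div}\,\mathbf{u}^{k+1}\big) = -\nabla P(Z^k),
\end{equation}
with the Navier-slip (or periodic) boundary conditions, and define $\rho^{k+1}, Z^{k+1}$ as the solutions of the linear transport equations $\partial_t\rho^{k+1} + \mathrm{div}(\rho^{k+1}\mathbf{u}^k)=0$ and $\partial_t Z^{k+1} + \mathrm{div}(Z^{k+1}\mathbf{u}^k)=0$ with the prescribed initial data. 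The lower bound $\inf\rho_0>0$ is what makes the momentum equation uniformly parabolic (the coefficient of $\partial_t\mathbf{u}^{k+1}$ stays bounded below), and it also propagates: since $\rho^{k+1}$ and $Z^{k+1}$ solve transport equations, their extrema are controlled by $\inf\rho_0\,e^{-\int_0^t\|\mathrm{div}\,\mathbf{u}^k\|_{L^\infty}}$ and the analogous upper bound, which on a short time interval $[0,T]$ stays comparable to the initial bounds. This yields \eqref{eq:lower-bound} with constants $C_0, C_1$ depending only on the quantities listed.

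\medskip

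\textbf{Uniform estimates and convergence.} Next I would derive, by energy estimates on the linearized system, a time $T>0$ and a bound $M$, both depending only on $\|\rho_0\|_{H^2}, \|Z_0\|_{H^2}, \|\mathbf{u}_0\|_{H^2}$ and $\inf\rho_0$, such that the iterates remain in the ball
\begin{equation}
\sup_{0\le t\le T}\Big(\|\rho^k\|_{H^2}+\|Z^k\|_{H^2}+\|\mathbf{u}^k\|_{H^2}\Big) + \int_0^T\Big(\|\mathbf{u}^k\|_{H^3}^2 + \|\partial_t\mathbf{u}^k\|_{H^1}^2\Big)\,dt \le M .
\end{equation}
The transport estimates for $\rho^{k+1}, Z^{k+1}$ in $H^2$ follow from differentiating the transport equation and using $\|\nabla\mathbf{u}^k\|_{L^\infty}\lesssim\|\mathbf{u}^k\|_{H^2}$ together with Gronwall; the parabolic estimates for $\mathbf{u}^{k+1}$ follow from testing against $\mathbf{u}^{k+1}$, $\partial_t\mathbf{u}^{k+1}$, and $\Delta$-type multipliers, then invoking the $H^2$/$H^3$ elliptic regularity for the Lamé operator with the given boundary conditions; the pressure source $\nabla P(Z^k)$ is controlled since $Z^k\in H^2$ and $P$ is smooth. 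Shrinking $T$ if necessary absorbs the nonlinear time-integral terms. Then I would show the iteration is contractive in a lower-order norm (e.g. $L^\infty_t L^2_x$ for $\rho, Z$ and $L^\infty_t L^2_x\cap L^2_t H^1_x$ for $\mathbf{u}$) on a possibly smaller time interval, so that $(\rho^k,\mathbf{u}^k,Z^k)$ converges to a limit which, by the uniform $H^2$ bounds and interpolation, is a strong solution with the regularity \eqref{eq:local-regularity}. Uniqueness follows from the same contraction estimate applied to the difference of two solutions.

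\medskip

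\textbf{Main obstacle.} The delicate point is the closure of the a priori estimate for $\mathbf{u}^{k+1}$ at the level of $H^2$ in space with the time-weighted regularity $\mathbf{u}_t\in L^2_tH^2_x$ and $\mathbf{u}_{tt}\in L^2_{t,x}$: one must differentiate the momentum equation in $t$, at which point the variable coefficient $\lambda(\rho^k)=(\rho^k)^\beta$ and its time derivative $\partial_t(\rho^k)^\beta = \beta(\rho^k)^{\beta-1}\partial_t\rho^k$ enter, and $\partial_t\rho^k$ is only controlled in $H^1$; handling these products requires careful use of the lower bound on $\rho^k$ (so that $(\rho^k)^{\beta-1}$ is bounded) and two-dimensional Sobolev embeddings $H^1\hookrightarrow L^p$ for all finite $p$. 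The compatibility of the boundary conditions with the elliptic regularity of the Lamé system—especially the Navier-slip condition $\mathrm{rot}\,\mathbf{u}=-K\mathbf{u}\cdot n^\perp$—is also something that must be checked carefully, but for the purpose of citing Lemma \ref{lem:local-existence} it suffices to note that this is exactly the content of \cite{lukacova2025}, and the constants produced depend only on the advertised norms of the initial data and on $\inf\rho_0$.
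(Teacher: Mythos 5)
The paper does not prove this lemma; it cites it verbatim to \cite{lukacova2025} ("The following well-known local existence theory $\ldots$ can be found in \cite{lukacova2025}"), so there is no in-paper proof to compare against. Your reconstruction is the standard linearization--iteration--contraction scheme, which is indeed what one expects the cited reference to implement, and your outline correctly identifies the role of $\inf\rho_0>0$ in rendering the momentum equation uniformly parabolic and in propagating the lower bounds \eqref{eq:lower-bound} through the transport structure on a short time interval.

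One small imprecision in your "main obstacle" paragraph: when differentiating $\lambda(\rho^k)=(\rho^k)^\beta$ in time and encountering $(\rho^k)^{\beta-1}\partial_t\rho^k$, for the relevant regime $\beta>4/3>1$ it is the \emph{upper} bound on $\rho^k$ (which also propagates via transport on $[0,T]$) that controls $(\rho^k)^{\beta-1}$, not the lower bound. The lower bound is what you need for the parabolicity coefficient $\rho^k$ and for any inverse powers $(\rho^k)^{-1}$ appearing after dividing the momentum equation by $\rho^k$; the claim as written is only literally accurate for $\beta<1$, which is outside the scope of Theorem \ref{th1}. This is a cosmetic point and does not affect the soundness of the approach. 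As you note, the remaining technicalities—closure of the time-differentiated estimate giving $\mathbf{u}_t\in L^2(0,T;H^2)$, $\mathbf{u}_{tt}\in L^2$, and compatibility of the Lam\'e elliptic regularity with the Navier-slip boundary condition $\mathrm{rot}\,\mathbf{u}=-K\mathbf{u}\cdot n^\perp$—are precisely the content of the cited reference, and it is appropriate to defer to it.
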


The following Poincar\'{e}--Sobolev and Brezis--Wainger inequalities will be used frequently.

\begin{lema} \label{lem:poincare-sobolev}
    (See \cite{H. Engler,H. Brezis,O.A. Ladyzenskaja}.) There exists a positive constant $C$ depending only on $\mathbb{T}^2$ such that every function $\mathbf{u} \in H^1(\mathbb{T}^2)$ satisfies for $2 < p < \infty$,
    \begin{equation} \label{eq:poincare-sobolev}
        \begin{aligned}
            \|\mathbf{u} - \bar{\mathbf{u}}\|_{L^p} &\leq C p^{1/2} \|\mathbf{u} - \bar{\mathbf{u}}\|_{L^2}^{2/p} \|\nabla \mathbf{u}\|_{L^2}^{1-2/p}, \\
            \|\mathbf{u}\|_{L^p} &\leq C p^{1/2} \|\mathbf{u}\|_{L^2}^{2/p} \|u\|_{H^1}^{1-2/p}.
        \end{aligned}
    \end{equation}
    Moreover, for $q > 2$, there exists some positive constant $C$ depending only on $q$ and $\mathbb{T}^2$ such that every function $v \in W^{1,q}(\mathbb{T}^2)$ satisfies
    \begin{equation} \label{eq:brezis-wainger}
        \begin{aligned}
            \|v\|_{L^\infty} \leq C \|\nabla v\|_{L^2} \ln^{1/2}(e + \|\nabla v\|_{L^q}) + C \|v\|_{L^2} + C.
        \end{aligned}
    \end{equation}
\end{lema}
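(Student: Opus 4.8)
The plan is to treat the three inequalities separately, since the first two are Gagliardo–Nirenberg-type interpolation estimates on $\mathbb{T}^2$ with explicit control of the constant in $p$, while the third is the Brezis–Wainger embedding adapted to the torus. For the first inequality, I would start from the standard Ladyzhenskaya/Gagliardo–Nirenberg inequality $\|w\|_{L^4(\mathbb{T}^2)}^2 \leq C\|w\|_{L^2}\|w\|_{H^1}$ applied to $w = |\mathbf{u}-\bar{\mathbf{u}}|^{p/2}$ (or, more cleanly, iterate the two-dimensional Sobolev embedding $\|w\|_{L^q}\lesssim q^{1/2}\|w\|_{H^1}^{}$ type bound), and then optimize. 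The cleanest route is to recall the known sharp form of the 2D Moser–Trudinger-type estimate: for $w$ with zero mean, $\|w\|_{L^p}\leq C p^{1/2}\|w\|_{L^2}^{2/p}\|\nabla w\|_{L^2}^{1-2/p}$; this is obtained by writing $\|w\|_{L^p}^p = \int |w|^p$, using the embedding $H^1(\mathbb{T}^2)\hookrightarrow L^p$ with operator norm $O(p^{1/2})$ (which follows from the series expansion of $e^{cw^2/\|\nabla w\|^2}$ or from interpolating $L^{2k}$ bounds obtained by repeated Ladyzhenskaya), and then tracking the interpolation exponents. The Poincaré inequality $\|\mathbf{u}-\bar{\mathbf{u}}\|_{L^2}\leq C\|\nabla\mathbf{u}\|_{L^2}$ gives the homogeneous form; the second inequality follows from the first by replacing $\|\nabla\mathbf{u}\|_{L^2}$ with $\|\mathbf{u}\|_{H^1}$ and absorbing $\bar{\mathbf{u}}$ via $\|\bar{\mathbf{u}}\|_{L^p}\leq \|\bar{\mathbf{u}}\|_{L^2}\leq \|\mathbf{u}\|_{L^2}$ together with the triangle inequality.

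For the Brezis–Wainger inequality \eqref{eq:brezis-wainger}, I would decompose $v = (v-\bar v) + \bar v$ and note $|\bar v|\leq C\|v\|_{L^2}$, so it suffices to bound $\|v-\bar v\|_{L^\infty}$. Then I would apply the first inequality of the lemma with a parameter $p$ to be chosen: $\|v-\bar v\|_{L^p}\leq C p^{1/2}\|v-\bar v\|_{L^2}^{2/p}\|\nabla v\|_{L^2}^{1-2/p}$, and separately use the 2D embedding $W^{1,q}\hookrightarrow L^\infty$ for $q>2$ together with an interpolation $\|v-\bar v\|_{L^\infty}\lesssim \|v-\bar v\|_{L^p}^{1-\theta}\|v-\bar v\|_{W^{1,q}}^{\theta}$ with $\theta = \theta(p,q)\to 0$ as $p\to\infty$. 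Choosing $p\sim \ln(e+\|\nabla v\|_{L^q})$ balances the $p^{1/2}$ growth against the decay of the high-norm factor, yielding the claimed logarithmic bound; the additive constant $C$ absorbs the regime where $\|\nabla v\|_{L^q}$ is bounded. Alternatively, and perhaps more transparently, one invokes the classical Brezis–Wainger result on $\mathbb{R}^n$ (or a bounded domain) in the borderline case and transfers it to $\mathbb{T}^2$ by a standard periodic extension/localization argument, then cites \cite{H. Brezis} directly.

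The main obstacle is bookkeeping the dependence of the constant on $p$ in the first inequality — one must be careful that the embedding constant for $H^1(\mathbb{T}^2)\hookrightarrow L^p$ grows like $p^{1/2}$ and not faster, since this precise rate is what makes the logarithmic (rather than power-type) Brezis–Wainger bound possible. This is where I would be most careful: the factor $p^{1/2}$ comes from the fact that in two dimensions $\|w\|_{L^{2k}}\leq C\sqrt{k}\,\|w\|_{H^1}$ can be proven by induction using Ladyzhenskaya's inequality with a constant independent of $k$, and then one interpolates. Everything else — the Poincaré step, the mean-value reductions, the interpolation exponent arithmetic, and the final choice $p\sim\ln(e+\|\nabla v\|_{L^q})$ — is routine. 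Since all three inequalities are classical and the lemma explicitly cites \cite{H. Engler,H. Brezis,O.A. Ladyzenskaja}, I would present a concise self-contained argument for the $p^{1/2}$ scaling and then refer to those sources for the remaining standard details.
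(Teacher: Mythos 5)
The paper gives no proof of this lemma at all — it is stated as a known preliminary result with citations to Engler, Brezis–Wainger, and Ladyzhenskaya–Solonnikov–Ural'ceva, so there is nothing to compare against line by line. Your reconstruction is a correct and faithful account of how these inequalities are proved in the cited sources: iterate Ladyzhenskaya's inequality to extract the $p^{1/2}$ growth of the $H^1\hookrightarrow L^p$ embedding constant in two dimensions, interpolate to obtain the stated Gagliardo--Nirenberg form for the mean-free part, use $L^2$-orthogonality of $\mathbf{u}-\bar{\mathbf{u}}$ and $\bar{\mathbf{u}}$ to pass to the second inequality, and then prove the Brezis--Wainger bound by optimizing $p\sim\ln(e+\|\nabla v\|_{L^q})$ against interpolation between $L^p$ and $W^{1,q}$. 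You correctly identify the single delicate point — that the $p^{1/2}$ (and not some faster) growth rate is exactly what permits the logarithmic rather than power-type dependence in the Brezis--Wainger step — and your treatment of it (induction via Ladyzhenskaya with a $k$-independent constant) is the standard route. This matches the approach the paper implicitly relies on by its citations.
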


The following Poincar\'{e} type inequality can be found in \cite{Feireisl2004}.

\begin{lema} \label{lem:poincare-type}
    Let $v \in H^1(\mathbb{T}^2)$, and let $\rho$ be a non-negative function such that
    \begin{equation}
        \begin{aligned}
            0 < M_1 \leq \int_{\mathbb{T}^2} \rho \, dx, \quad \int_{\mathbb{T}^2} \rho^\gamma \, dx \leq M_2,
        \end{aligned}
    \end{equation}
    with $\gamma > 1$. Then there is a constant $C$ depending solely on $M_1$, $M_2$, and $\gamma$ such that
    \begin{equation} \label{eq:poincare-type}
        \begin{aligned}
            \|v\|_{L^2(\mathbb{T}^2)} \leq C \int_{\mathbb{T}^2} \rho v^2 \, dx + C \|\nabla v\|_{L^2(\mathbb{T}^2)}.
        \end{aligned}
    \end{equation}
\end{lema}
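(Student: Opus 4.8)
\textbf{Proof proposal for Lemma~\ref{lem:poincare-type}.}

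The plan is to argue by contradiction, exploiting the compact embedding $H^1(\T^2)\hookrightarrow L^2(\T^2)$. Suppose the inequality fails. Then for every $n\in\N$ there exist $v_n\in H^1(\T^2)$ and a nonnegative $\rho_n$ with $M_1\le\int_{\T^2}\rho_n\,dx$ and $\int_{\T^2}\rho_n^\gamma\,dx\le M_2$ such that
\begin{equation}
\|v_n\|_{L^2(\T^2)} > n\left(\int_{\T^2}\rho_n v_n^2\,dx + \|\nabla v_n\|_{L^2(\T^2)}\right).
\end{equation}
By homogeneity in $v_n$ we may normalize $\|v_n\|_{L^2(\T^2)}=1$, so that $\int_{\T^2}\rho_n v_n^2\,dx\to 0$ and $\|\nabla v_n\|_{L^2(\T^2)}\to 0$. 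Hence $\{v_n\}$ is bounded in $H^1(\T^2)$, and up to a subsequence $v_n\rightharpoonup v$ weakly in $H^1$ and $v_n\to v$ strongly in $L^2$; in particular $\|v\|_{L^2}=1$ and, since $\|\nabla v_n\|_{L^2}\to0$, we get $\nabla v=0$, so $v\equiv c$ is a nonzero constant. Without loss of generality $c>0$, and then $v_n^2\ge c^2/4$ on $\T^2$ minus a set whose measure tends to $0$ (by strong $L^2$, hence a.e.\ convergence along a further subsequence).

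Next I would extract a weak limit for the densities. Since $\{\rho_n\}$ is bounded in $L^\gamma(\T^2)$ with $\gamma>1$, up to a subsequence $\rho_n\rightharpoonup\rho$ weakly in $L^\gamma$, and by weak lower semicontinuity $\int_{\T^2}\rho\,dx\ge M_1>0$ (using that the constant function $1$ is an admissible test function and $\T^2$ has finite measure), so $\rho$ is a nonnegative function with strictly positive total mass. The contradiction will come from showing that $\int_{\T^2}\rho_n v_n^2\,dx$ cannot go to $0$: heuristically $\rho_n v_n^2 \approx c^2\rho_n$, whose integral is bounded below by $c^2 M_1>0$. To make this rigorous, fix $\varepsilon>0$ small and split $\int_{\T^2}\rho_n v_n^2\,dx \ge \int_{\{v_n^2\ge c^2/4\}}\rho_n v_n^2\,dx \ge \tfrac{c^2}{4}\int_{\{v_n^2\ge c^2/4\}}\rho_n\,dx = \tfrac{c^2}{4}\left(\int_{\T^2}\rho_n\,dx - \int_{\{v_n^2<c^2/4\}}\rho_n\,dx\right)$. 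The first term is $\ge M_1$; for the second, H\"older's inequality with exponents $\gamma$ and $\gamma/(\gamma-1)$ gives $\int_{\{v_n^2<c^2/4\}}\rho_n\,dx \le \|\rho_n\|_{L^\gamma}\,|\{v_n^2<c^2/4\}|^{1-1/\gamma} \le M_2^{1/\gamma}\,|\{v_n^2<c^2/4\}|^{1-1/\gamma}$, and since $v_n\to c$ a.e.\ the measure $|\{v_n^2<c^2/4\}|\to0$. Hence $\liminf_n\int_{\T^2}\rho_n v_n^2\,dx \ge \tfrac{c^2}{4}M_1>0$, contradicting $\int_{\T^2}\rho_n v_n^2\,dx\to0$. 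This proves the inequality with some constant $C$; tracking the dependence through the contradiction argument (or, more honestly, re-running a quantitative version) shows $C$ depends only on $M_1$, $M_2$, $\gamma$.

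The main obstacle is the passage to a.e.\ convergence of $v_n$ and the control of the bad set $\{v_n^2<c^2/4\}$ uniformly against the densities: this is exactly where the $L^\gamma$ bound on $\rho_n$ with $\gamma>1$ (rather than merely $L^1$) is essential, since it gives equi-integrability of $\{\rho_n\}$ and hence lets the small-measure bad set contribute negligibly. A slightly cleaner alternative avoiding the contradiction argument is direct: write $\|v\|_{L^2}^2 \le 2\|v-\bar v_\rho\|_{L^2}^2 + 2|\bar v_\rho|^2$ where $\bar v_\rho = \frac{1}{\int\rho}\int\rho v\,dx$ is the $\rho$-weighted average; bound $|\bar v_\rho|^2 \le \frac{1}{M_1}\int\rho v^2\,dx$ by Jensen, and bound $\|v-\bar v_\rho\|_{L^2}$ by $\|\nabla v\|_{L^2}$ via a weighted Poincar\'e inequality whose constant depends only on $M_1,M_2,\gamma$ (this last weighted Poincar\'e inequality is itself proved by the compactness argument above, so the two routes are equivalent). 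I would present the contradiction argument as the cleanest self-contained option.
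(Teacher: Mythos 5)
Your compactness argument is the standard proof of this inequality, and its substantive steps --- Rellich, $\nabla v=0$ from $\|\nabla v_n\|_{L^2}\to 0$, a.e.\ convergence of $v_n$ to a nonzero constant $c$ along a subsequence, and the H\"older$/L^\gamma$ estimate that makes the bad set $\{v_n^2<c^2/4\}$ contribute negligibly to $\int\rho_n v_n^2\,dx$ --- are all correct. The paper itself gives no proof and simply cites Feireisl (2004), so there is no in-paper argument to compare against. However, your very first step, ``by homogeneity in $v_n$ we may normalize $\|v_n\|_{L^2}=1$,'' does not apply to the inequality as literally stated: the term $\int_{\mathbb{T}^2}\rho v^2\,dx$ is quadratic in $v$ while $\|v\|_{L^2}$ and $\|\nabla v\|_{L^2}$ are linear, so the inequality is not homogeneous. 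Rescaling $w_n=v_n/\|v_n\|_{L^2}$ turns the contradiction hypothesis into $\|v_n\|_{L^2}\int\rho_n w_n^2\,dx+\|\nabla w_n\|_{L^2}<1/n$, from which you cannot conclude $\int\rho_n w_n^2\,dx\to 0$ unless $\|v_n\|_{L^2}$ is bounded away from zero. In fact the literal statement is false: taking $v\equiv\varepsilon$ a small nonzero constant gives $\|v\|_{L^2}=\varepsilon$ on the unit torus while the right-hand side is $C\varepsilon^2\int\rho\,dx$, and $\varepsilon\le C\varepsilon^2\int\rho\,dx$ fails as $\varepsilon\to 0$.

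What your argument actually proves --- and what appears in Feireisl's book and is what the rest of the paper uses --- is the degree-two homogeneous form $\|v\|_{L^2}^2\le C\int_{\mathbb{T}^2}\rho v^2\,dx+C\|\nabla v\|_{L^2}^2$. For that version the normalization is legitimate, the rest of your argument goes through unchanged, and the dependence of $C$ only on $M_1,M_2,\gamma$ falls out of the contradiction argument precisely because $\rho_n$ was allowed to vary with $n$. Your sketched direct alternative via the $\rho$-weighted mean $\bar v_\rho=(\int\rho)^{-1}\int\rho v\,dx$ is also sound for the squared version and is worth preferring, since it makes the constant explicit and avoids compactness: $|\bar v_\rho|^2\le M_1^{-1}\int\rho v^2\,dx$ by Cauchy--Schwarz, and $\|v-\bar v_\rho\|_{L^2}\le\|v-\bar v\|_{L^2}+|\bar v-\bar v_\rho|\le C\|\nabla v\|_{L^2}+M_1^{-1}M_2^{1/\gamma}\|v-\bar v\|_{L^{\gamma/(\gamma-1)}}\le C\|\nabla v\|_{L^2}$ by the Poincar\'e--Sobolev inequality in two dimensions. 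Either way, the correction you should make is to square the left-hand side and the gradient term so that the statement is homogeneous and true.
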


Then, we state the following Beale--Kato--Majda type inequality which will be used later to estimate $\|\nabla \mathbf{u}\|_{L^\infty}$ and $\|\nabla \rho\|_{L^p}, \|\nabla Z\|_{L^p}$.

\begin{lema} \label{lem:BKM}
    (See \cite{J.T. Beale, T. Kato, huanglixinsiam}.) For $2 < q < \infty$, there is a constant $C(q)$ such that the following estimate holds for all $\nabla \mathbf{u} \in W^{1,q}(\mathbb{T}^2)$,
    \begin{equation} \label{eq:BKM}
        \begin{aligned}
            \|\nabla \mathbf{u}\|_{L^\infty} \leq C \bigl( \|\operatorname{div} \mathbf{u}\|_{L^\infty} + \|\operatorname{rot} \mathbf{u}\|_{L^\infty} \bigr) \log(e + \|\nabla^2 \mathbf{u}\|_{L^q}) + C \|\nabla \mathbf{u}\|_{L^2} + C.
        \end{aligned}
    \end{equation}
\end{lema}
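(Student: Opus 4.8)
\textbf{Proof plan for Lemma~\ref{lem:BKM} (Beale--Kato--Majda type inequality).}

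The plan is to deduce the estimate from a logarithmic interpolation inequality combined with the Calder\'on--Zygmund theory, reconstructing $\nabla\mathbf{u}$ from its divergence and rotation. First I would recall the classical fact that on $\mathbb{T}^2$ every vector field $\mathbf{u}$ admits a Helmholtz-type decomposition, and that $\nabla\mathbf{u}$ is recovered from $d:=\operatorname{div}\mathbf{u}$ and $\omega:=\operatorname{rot}\mathbf{u}$ via $\Delta\mathbf{u}=\nabla d-\nabla^\perp\omega$, i.e.\ up to additive constants $\nabla\mathbf{u}=\nabla\nabla\Delta^{-1}d-\nabla\nabla^\perp\Delta^{-1}\omega$; here $\nabla\nabla\Delta^{-1}$ and $\nabla\nabla^\perp\Delta^{-1}$ are matrices of Calder\'on--Zygmund operators, bounded on $L^p$ for $1<p<\infty$. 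The starting point is then the well-known borderline logarithmic inequality: for any $r>2$ there is $C(r)$ so that for $g\in W^{1,r}(\mathbb{T}^2)$ with $g=\nabla d$-type structure, or more directly, for a Calder\'on--Zygmund operator $\mathcal{K}$ and $w\in L^2\cap W^{1,r}$,
\begin{equation}\label{eq:bkm-log-interp}
\|\mathcal{K}w\|_{L^\infty}\le C\|w\|_{L^\infty}\log\bigl(e+\|w\|_{W^{1,r}}\bigr)+C\|w\|_{L^2}+C.
\end{equation}
Applying this with $w=\omega$ and $w=d$ (so $\mathcal{K}w$ is a component of $\nabla\mathbf{u}$, since $\nabla\mathbf{u}=\mathcal{R}\otimes\mathcal{R}$ acting on $(d,\omega)$ with $\mathcal{R}$ the Riesz transforms) yields
\[
\|\nabla\mathbf{u}\|_{L^\infty}\le C\bigl(\|d\|_{L^\infty}+\|\omega\|_{L^\infty}\bigr)\log\bigl(e+\|d\|_{W^{1,q}}+\|\omega\|_{W^{1,q}}\bigr)+C\|\nabla\mathbf{u}\|_{L^2}+C,
\]
and since $\|d\|_{W^{1,q}}+\|\omega\|_{W^{1,q}}\lesssim\|\nabla\mathbf{u}\|_{L^q}+\|\nabla^2\mathbf{u}\|_{L^q}\lesssim e+\|\nabla^2\mathbf{u}\|_{L^q}$ (absorbing the lower-order $L^q$ term via interpolation with $L^2$ and $\log$), the stated form \eqref{eq:BKM} follows after renaming constants.

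To prove \eqref{eq:bkm-log-interp} itself I would split the singular integral kernel $K(x)$ of $\mathcal{K}$ into a near-diagonal piece $|x|<\delta$, an intermediate piece $\delta<|x|<1$, and (on the torus) the smooth remainder. On $|x|<\delta$ one bounds the contribution by $C\delta\|\nabla w\|_{L^r}\cdot(\text{something})$ using the mean-value/cancellation property of the Calder\'on--Zygmund kernel together with H\"older in the annuli; on $\delta<|x|<1$ one uses $\int_{\delta<|x|<1}|K(x)|\,dx\le C\log(1/\delta)$ together with $\|w\|_{L^\infty}$; the smooth part contributes $C\|w\|_{L^2}$. This gives a bound of the shape $C\|w\|_{L^\infty}\log(1/\delta)+C\delta^{\kappa}\|w\|_{W^{1,r}}+C\|w\|_{L^2}$ for some $\kappa=\kappa(r)>0$; optimizing $\delta\sim(e+\|w\|_{W^{1,r}})^{-1/\kappa}$ produces \eqref{eq:bkm-log-interp}. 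Alternatively, and more cleanly, I would simply cite \cite{J.T. Beale, T. Kato, huanglixinsiam}, where exactly this inequality (in the periodic setting, with the $\operatorname{div}$/$\operatorname{rot}$ splitting) is established; since the lemma statement already attributes the result to these references, the honest path is to state it and refer to the literature, noting only that the two-dimensional torus causes no additional difficulty because the Riesz transforms are bounded Fourier multipliers on $\mathbb{T}^2$ and the constant in the $W^{2,q}$ elliptic estimate $\|\nabla^2\mathbf{u}\|_{L^q}\lesssim\|\Delta\mathbf{u}\|_{L^q}=\|\nabla d-\nabla^\perp\omega\|_{L^q}$ is dimension- and geometry-dependent only.

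The main obstacle — really the only subtle point — is the logarithmic (endpoint) nature of the estimate: one cannot simply use $W^{2,q}\hookrightarrow W^{1,\infty}$ with a fixed constant, because that would replace $\log(e+\|\nabla^2\mathbf{u}\|_{L^q})$ by $\|\nabla^2\mathbf{u}\|_{L^q}$ and destroy the usefulness of the inequality in the subsequent Gronwall argument; the whole content is that the $L^\infty$ norm of $\nabla\mathbf{u}$ is controlled \emph{linearly} by $\|d\|_{L^\infty}+\|\omega\|_{L^\infty}$ with only a logarithmic correction in the higher norm. Handling this requires the careful three-scale kernel decomposition above (or an equivalent Littlewood--Paley argument summing dyadic blocks: the low-frequency part contributes $\|w\|_{L^2}$, the middle band of $N\lesssim\log N_0$ frequencies contributes $\|w\|_{L^\infty}\log N_0$, and the tail $N\gtrsim N_0$ is summed against $\|w\|_{W^{1,q}}$ with $N_0$ chosen as a power of that norm). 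Everything else — the Helmholtz decomposition, boundedness of Riesz transforms on $\mathbb{T}^2$, and the elliptic regularity bound — is standard and contributes no difficulty in the periodic case.
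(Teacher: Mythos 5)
The paper gives no proof of Lemma~\ref{lem:BKM}: it is stated as a known result and attributed to \cite{J.T. Beale, T. Kato, huanglixinsiam}, so there is no ``paper's own proof'' to compare against. Your reconstruction is the standard argument for a BKM-type logarithmic inequality and is correct in outline, and you rightly observe at the end that the honest path here is to cite, which is exactly what the paper does.

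On the substance: the identity you invoke is, up to a sign convention on $\operatorname{rot}$ and $\nabla^\perp$, the standard one $\Delta\mathbf{u}=\nabla\operatorname{div}\mathbf{u}+\nabla^\perp\operatorname{rot}\mathbf{u}$ (with the paper's convention $\omega=\partial_2 u_1-\partial_1 u_2$, $\nabla^\perp=(\partial_2,-\partial_1)$), from which $\nabla\mathbf{u}$ is recovered through the double Riesz transforms $\partial_i\partial_j\Delta^{-1}=-\mathcal{R}_i\mathcal{R}_j$; that part is fine. One step in the first route is glossed over: when you plug $w=d$ or $w=\omega$ into \eqref{eq:bkm-log-interp} you get a factor $\log(e+\|w\|_{W^{1,q}})$, and $\|w\|_{W^{1,q}}$ contains $\|\nabla\mathbf{u}\|_{L^q}$ as well as $\|\nabla^2\mathbf{u}\|_{L^q}$. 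Passing from $\log(e+\|\nabla\mathbf{u}\|_{L^q}+\|\nabla^2\mathbf{u}\|_{L^q})$ to the stated $\log(e+\|\nabla^2\mathbf{u}\|_{L^q})$ plus the additive $C\|\nabla\mathbf{u}\|_{L^2}$ is not automatic: the bound $\|\nabla\mathbf{u}\|_{L^q}\lesssim\|\nabla\mathbf{u}\|_{L^2}+\|\nabla^2\mathbf{u}\|_{L^q}$ introduces a term $(\|d\|_\infty+\|\omega\|_\infty)\log(e+\|\nabla\mathbf{u}\|_{L^2})$, which is not obviously of the stated shape. The Littlewood--Paley alternative you sketch at the end avoids this issue entirely, since it bounds $\nabla\mathbf{u}$ directly against the low, middle, and high dyadic blocks and the $L^q$ intermediate norm never appears; that is the cleaner way to organize the proof and the way it appears in the cited references. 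Apart from this point, the three-scale kernel decomposition, the boundedness of the Riesz transforms on $\mathbb{T}^2$, and the optimization over the cutoff scale are all standard and correctly described.
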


Next, let $\Delta^{-1}$ denote the Laplacian inverse with zero mean on $\mathbb{T}^2$ and $\mathcal{R}_i = (-\triangle)^{-1/2} \partial_i$ be the usual Riesz transform on $\mathbb{T}^2$. Moreover, let $\mathcal{H}^1(\mathbb{T}^2)$ and $\mathcal{BMO}(\mathbb{T}^2)$ stand for the usual Hardy and BMO spaces:
\begin{equation}
    \begin{aligned}
        &\mathcal{H}^1 = \Bigl\{ f \in L^1(\mathbb{T}^2) : \|f\|_{\mathcal{H}^1} = \|f\|_{L^1} + \|\mathcal{R}_1 f\|_{L^1} + \|\mathcal{R}_2 f\|_{L^1} < \infty, \quad \bar{f} = 0 \Bigr\}, \\
        &\mathcal{BMO} = \Bigl\{ f \in L^1_{\text{loc}}(\mathbb{T}^2) : \|f\|_{\mathcal{BMO}} < \infty \Bigr\}
    \end{aligned}
\end{equation}
with
\begin{equation}
    \begin{aligned}
        \|f\|_{\mathcal{BMO}} = \sup_{\substack{x \in \mathbb{T}^2 \\ r \in (0, d)}} \frac{1}{|\Omega_r(x)|} \int_{\Omega_r(x)} \Bigl| f(y) - \frac{1}{|\Omega_r(x)|} \int_{\Omega_r(x)} f(z) \, dz \Bigr| \, dy,
    \end{aligned}
\end{equation}
where $d$ is the diameter of $\mathbb{T}^2$, $\Omega_r(x) = \mathbb{T}^2 \cap B_r(x)$, and $B_r(x)$ is a ball with center $x$ and radius $r$. Consider the composition of two Riesz transforms, $\mathcal{R}_i \circ \mathcal{R}_j$ ($i, j = 1, 2$). There is a representation of this operator as a singular integral
\begin{equation}
    \begin{aligned}
        \mathcal{R}_i \circ \mathcal{R}_j (f)(x) = \text{p.v.} \int K_{ij}(x - y) f(y) \, dy,
    \end{aligned}
\end{equation}
where the kernel $K_{ij}(x)$ ($i, j = 1, 2$) has a singularity of the second order at $0$ and
\begin{equation}
    \begin{aligned}
        |K_{ij}(x)| \leq C |x|^{-2}, \quad x \in \mathbb{T}^2.
    \end{aligned}
\end{equation}
Given a function $b$, define the linear operator
\begin{equation}
    \begin{aligned}
        [b, \mathcal{R}_i \mathcal{R}_j](f) \triangleq b \mathcal{R}_i \circ \mathcal{R}_j (f) - \mathcal{R}_i \circ \mathcal{R}_j (b f), \quad i, j = 1, 2.
    \end{aligned}
\end{equation}
This operator can be written as a convolution with the singular kernel $K_{ij}$,
\begin{equation}
    \begin{aligned}
        [b, \mathcal{R}_i \mathcal{R}_j](f)(x) \triangleq \text{p.v.} \int K_{ij}(x - y) \bigl( b(x) - b(y) \bigr) f(y) \, dy, \quad i, j = 1, 2.
    \end{aligned}
\end{equation}

The following properties of the commutator $[b, \mathcal{R}_i \mathcal{R}_j](f)$ will be useful for our analysis.

\begin{lema} \label{lem:commutator}(See \cite{R. Coifman,R. CoifmanY. Meyer})
    Let $b, f \in C^\infty(\mathbb{T}^2)$. Then for $p \in (1, \infty)$, there is $C(p)$ such that
    \begin{equation} \label{eq:commutator-Lp}
        \begin{aligned}
            \bigl\| [b, \mathcal{R}_i \mathcal{R}_j](f) \bigr\|_{L^p} \leq C(p) \|b\|_{\mathcal{BMO}} \|f\|_{L^p}.
        \end{aligned}
    \end{equation}
    Moreover, for $q_i \in (1, \infty)$ ($i = 1, 2, 3$) with $q_1^{-1} = q_2^{-1} + q_3^{-1}$, there is a positive constant $C$ depending only on $q_i$ ($i = 1, 2, 3$) such that
    \begin{equation} \label{eq:commutator-gradient}
        \begin{aligned}
            \bigl\| \nabla [b, \mathcal{R}_i \mathcal{R}_j](f) \bigr\|_{L^{q_1}} \leq C \|\nabla b\|_{L^{q_2}} \|f\|_{L^{q_3}}.
        \end{aligned}
    \end{equation}
\end{lema}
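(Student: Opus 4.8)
Since $b,f\in C^{\infty}(\mathbb{T}^{2})$, the plan is to work from the explicit singular‑integral representations of the commutator and to obtain bounds with constants independent of $b$ and $f$; the two stated inequalities then follow. All the Euclidean Calder\'on--Zygmund and Hardy‑space facts invoked below transfer to $\mathbb{T}^{2}$ because, after periodization, the kernel of $\mathcal{R}_i\mathcal{R}_j$ agrees near the diagonal with the second‑order Riesz kernel on $\mathbb{R}^{2}$ up to a smooth correction.

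For \eqref{eq:commutator-Lp} I would reproduce the Coifman--Rochberg--Weiss commutator argument for the Calder\'on--Zygmund operator $T=\mathcal{R}_i\mathcal{R}_j$ with $b\in\mathcal{BMO}$. Fixing an exponent $1<r<p$, the first step is the pointwise sharp‑maximal estimate
\[
\bigl([b,T]f\bigr)^{\#}(x)\ \le\ C\,\|b\|_{\mathcal{BMO}}\Bigl(\bigl(M(|Tf|^{r})(x)\bigr)^{1/r}+\bigl(M(|f|^{r})(x)\bigr)^{1/r}\Bigr),
\]
where $g^{\#}$ is the Fefferman--Stein sharp maximal function and $M$ the Hardy--Littlewood maximal operator; this is obtained by fixing a cube $Q\ni x$, subtracting the constant $T\bigl((b-b_{Q})f\mathbf{1}_{(3Q)^{c}}\bigr)(x)$ inside the oscillation average, splitting $b-b_{Q}$ into its parts on $3Q$ and on $(3Q)^{c}$, and controlling the resulting pieces by H\"older's inequality, the John--Nirenberg inequality, the $L^{r}$‑boundedness of $T$, and the standard kernel size/smoothness estimates. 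The second step combines the Fefferman--Stein inequality $\|g\|_{L^{p}}\le C\|g^{\#}\|_{L^{p}}$ (legitimate since $[b,T]f\in L^{p}$ for smooth data), the $L^{p/r}$‑boundedness of $M$, and the $L^{p}$‑boundedness of $T$. An alternative route I would keep in reserve is the analytic‑family device: $T_{z}f=e^{zb}\,T\bigl(e^{-zb}f\bigr)$ is bounded on $L^{p}$ uniformly for $|\mathrm{Re}\,z|\le c/\|b\|_{\mathcal{BMO}}$ (by John--Nirenberg, $e^{\pm p\,(\mathrm{Re}\,z)\,b}$ is then a Muckenhoupt $A_{p}$ weight), and since $[b,T]f=\frac{d}{dz}\big|_{z=0}T_{z}f$, Cauchy's integral formula on the circle $|z|=c/\|b\|_{\mathcal{BMO}}$ produces exactly the factor $\|b\|_{\mathcal{BMO}}$.

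For \eqref{eq:commutator-gradient} the plan is to differentiate the kernel identity
\[
[b,\mathcal{R}_i\mathcal{R}_j](f)(x)=\int K_{ij}(x-y)\bigl(b(x)-b(y)\bigr)f(y)\,dy
\]
(an absolutely convergent integral for smooth $b$, since the factor $b(x)-b(y)$ cancels one order of the singularity) in the variable $x$, which yields
\[
\partial_{k}[b,\mathcal{R}_i\mathcal{R}_j](f)=\partial_{k}b\;\mathcal{R}_i\mathcal{R}_j f+T_{1}f,\qquad T_{1}f(x):=\int \partial_{k}K_{ij}(x-y)\bigl(b(x)-b(y)\bigr)f(y)\,dy .
\]
The first term is immediate: by H\"older's inequality with $q_{1}^{-1}=q_{2}^{-1}+q_{3}^{-1}$ and the $L^{q_{3}}$‑boundedness of $\mathcal{R}_i\mathcal{R}_j$ it is $\lesssim\|\nabla b\|_{L^{q_{2}}}\|f\|_{L^{q_{3}}}$. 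For $T_{1}$, where $\partial_{k}K_{ij}$ is homogeneous of degree $-3$, I would insert $b(x)-b(y)=\sum_{l}(x_{l}-y_{l})\int_{0}^{1}\partial_{l}b\bigl(y+s(x-y)\bigr)\,ds$, so that
\[
T_{1}f(x)=\sum_{l}\int_{0}^{1}\!\!\int L_{l}(x-y)\,\partial_{l}b\bigl(y+s(x-y)\bigr)\,f(y)\,dy\,ds,\qquad L_{l}(z):=\partial_{k}K_{ij}(z)\,z_{l},
\]
with $L_{l}$ homogeneous of degree $-2$ (hence a candidate Calder\'on--Zygmund kernel). After verifying that $L_{l}$ has vanishing mean on the unit circle, one obtains the pointwise bound $|T_{1}f(x)|\le C\,M(\nabla b)(x)\,\bigl(M(|f|^{\sigma})(x)\bigr)^{1/\sigma}$ for any $\sigma>1$, uniformly in $s\in[0,1]$, via the usual splitting of the $y$‑integral into $|x-y|$ small and large; choosing $\sigma<q_{3}$ and applying H\"older together with the $L^{q_{2}}$‑ and $L^{q_{3}/\sigma}$‑boundedness of $M$ gives $\|T_{1}f\|_{L^{q_{1}}}\lesssim\|\nabla b\|_{L^{q_{2}}}\|f\|_{L^{q_{3}}}$. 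Equivalently, one may recognize $(\nabla b,f)\mapsto\partial_{k}[b,\mathcal{R}_i\mathcal{R}_j](f)$ as a bilinear Calder\'on--Zygmund operator and invoke the Coifman--Meyer multilinear theory directly.

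The step I expect to cost the most care is the analysis of $T_{1}$: one has to check that the contracted kernel $L_{l}(z)=\partial_{k}K_{ij}(z)z_{l}$ genuinely satisfies the Calder\'on--Zygmund size, smoothness, and cancellation conditions (the cancellation being precisely what upgrades it from weak‑type to strong $L^{p}$ bounds), and that the maximal‑function domination above is uniform in the mean‑value parameter $s$. By contrast, \eqref{eq:commutator-Lp} is standard once one is willing to quote the Fefferman--Stein and John--Nirenberg inequalities, and the transfer from $\mathbb{R}^{2}$ to $\mathbb{T}^{2}$ (periodization of the kernels, density of $C^{\infty}$) is routine.
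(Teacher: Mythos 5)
The paper does not prove this lemma; it simply cites Coifman--Rochberg--Weiss and Coifman--Meyer, so your proposal is an attempt to reconstruct the standard proofs rather than to match an argument given in the text. Your sketch of \eqref{eq:commutator-Lp} via the Fefferman--Stein sharp maximal function (or, alternatively, the Coifman--Rochberg--Weiss analytic‑family trick) is the standard argument and is fine. Your kernel decomposition
\[
\partial_k [b,\mathcal R_i\mathcal R_j](f)=\partial_k b\,\mathcal R_i\mathcal R_j f + T_1 f,
\qquad T_1 f(x)=\int\partial_k K_{ij}(x-y)\bigl(b(x)-b(y)\bigr)f(y)\,dy,
\]
for \eqref{eq:commutator-gradient} is also correct, and so is the observation that $L_l(z)=\partial_k K_{ij}(z)\,z_l$ is a degree $-2$ kernel with vanishing spherical mean.

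The genuine gap is the claimed pointwise bound
\[
|T_{1}f(x)|\ \le\ C\,M(\nabla b)(x)\bigl(M(|f|^{\sigma})(x)\bigr)^{1/\sigma},\quad\text{uniformly in }s\in[0,1].
\]
This is false. For $s=1$ the inner integral is $\partial_l b(x)\,\mathrm{p.v.}\!\int L_l(x-y)f(y)\,dy$, i.e.\ $\partial_l b(x)\cdot T_L f(x)$ with $T_L$ a Calder\'on--Zygmund operator; a singular integral operator is \emph{not} pointwise dominated by any maximal function of its argument (if it were, Calder\'on--Zygmund theory would collapse to the trivial estimate $\|Tf\|_{L^p}\le C\|Mf\|_{L^p}$ with no cancellation needed). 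For $s=0$ the inner integral is $T_L(\partial_l b\cdot f)(x)$, again a singular integral that admits no such pointwise bound. The ``usual splitting of the $y$-integral'' controls the tail of the kernel but cannot tame the near-diagonal singular part without orthogonality, so your intermediate step would fail already at the endpoints $s\in\{0,1\}$ (and a fortiori for $s$ near them). The two routes you offer for $T_1$ are therefore not ``equivalent'': only the second one, recognizing $(\nabla b,f)\mapsto T_1 f$ as a bilinear Coifman--Meyer operator, works. Concretely, after the change of variables $z=x-y$ the inner operator has bilinear Fourier multiplier $\widehat{L_l}\bigl((1-s)\eta+\zeta\bigr)$, which is homogeneous of degree $0$ and smooth off the origin, hence satisfies the Coifman--Meyer derivative conditions $|\partial_\eta^\alpha\partial_\zeta^\beta m(\eta,\zeta)|\le C_{\alpha\beta}(|\eta|+|\zeta|)^{-|\alpha|-|\beta|}$ uniformly in $s\in[0,1]$; this gives $\|T_1 f\|_{L^{q_1}}\le C\|\nabla b\|_{L^{q_2}}\|f\|_{L^{q_3}}$ directly. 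You should drop the pointwise-domination route and present the bilinear multiplier argument as the actual proof, not as a side remark.
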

Finally, the following Zlotnik inequality will be used to get the time-independent upper bound of the density $\rho$.
\begin{lema} \label{lem:Zlotnik}
    (See \cite{A.A. Zlotnik}.) Let the function $y \in W^{1,1}(0, T)$ satisfy
    \begin{equation}
        \begin{aligned}
            y'(t) = g(y) + h'(t) \quad \text{on } [0, T], \quad y(0) = y^0
        \end{aligned}
    \end{equation}
    with $g \in C(\mathbb{R})$ and $h \in W^{1,1}(0, T)$. If $g(\infty) = -\infty$ and
    \begin{equation} \label{eq:Zlotnik-condition}
        \begin{aligned}
            h(t_2) - h(t_1) \leq N_0 + N_1 (t_2 - t_1)
        \end{aligned}
    \end{equation}
    for all $0 \leq t_1 < t_2 \leq T$ with some $N_0 \geq 0$ and $N_1 \geq 0$, then
    \begin{equation}
        \begin{aligned}
            y(t) \leq \max \{ y^0, \bar{\zeta} \} + N_0 < \infty \quad \text{on } [0, T],
        \end{aligned}
    \end{equation}
    where $\bar{\zeta}$ is a constant such that
    \begin{equation}
        \begin{aligned}
            g(\zeta) \leq -N_1 \quad \text{for} \quad \zeta \geq \bar{\zeta}.
        \end{aligned}
    \end{equation}
\end{lema}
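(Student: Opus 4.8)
The plan is to establish this by an elementary comparison argument that isolates the dissipative mechanism $g(\infty)=-\infty$ and plays it off against the at-most-linear growth of $h$. First I would record that, since $y\in W^{1,1}(0,T)$ and $h\in W^{1,1}(0,T)$, both functions are absolutely continuous on $[0,T]$, so the identity $y'=g(y)+h'$ holds a.e.\ and may be integrated, and in particular $y$ is continuous. Then, using $g(\infty)=-\infty$, I would fix once and for all a threshold $\bar{\zeta}$ with $g(\zeta)\le -N_1$ for all $\zeta\ge\bar{\zeta}$, and set $M:=\max\{y^0,\bar{\zeta}\}+N_0$; the goal then becomes to show $y(t)\le M$ for all $t\in[0,T]$.

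The core of the argument is a last-crossing-time dichotomy. Fix $t\in(0,T]$: if $y(s)\le\bar{\zeta}$ throughout $[0,t]$ there is nothing to prove, so assume $y(t)>\bar{\zeta}$. Let $t_1$ be the supremum of the set $\{s\in[0,t]:y(s)\le\bar{\zeta}\}$, with the convention $t_1:=0$ when this set is empty. By continuity of $y$ one has $y(t_1)\le\max\{y^0,\bar{\zeta}\}$, while for every $s\in(t_1,t]$ we have $y(s)>\bar{\zeta}$ and hence $g(y(s))\le -N_1$, so
\begin{equation*}
y'(s)=g(y(s))+h'(s)\le -N_1+h'(s)\qquad\text{for a.e. }s\in(t_1,t).
\end{equation*}
Integrating over $(t_1,t)$ and invoking the hypothesis $h(t)-h(t_1)\le N_0+N_1(t-t_1)$ gives
\begin{equation*}
y(t)-y(t_1)\le -N_1(t-t_1)+\bigl(h(t)-h(t_1)\bigr)\le N_0,
\end{equation*}
so that $y(t)\le y(t_1)+N_0\le\max\{y^0,\bar{\zeta}\}+N_0=M$. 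Since $t\in(0,T]$ was arbitrary and the bound is trivial at $t=0$, this yields $y(t)\le M<\infty$ on $[0,T]$, which is precisely the assertion.

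As for difficulty, this lemma is a classical ODE comparison result and its proof presents no genuine obstacle; it does not hinge on any delicate estimate. The only points deserving minor care are (i) the passage from the pointwise inequality to its integrated form, which rests on the absolute continuity of $y$ and $h$ granted by the $W^{1,1}$ hypotheses, and (ii) the bookkeeping around the last time $t_1$ at which $y$ attains the level $\bar{\zeta}$ — in particular the degenerate situation $y^0>\bar{\zeta}$ in which $y$ never returns below $\bar{\zeta}$, which is exactly why the definition of $t_1$ carries the convention $t_1=0$.
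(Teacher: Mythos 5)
The paper does not prove this lemma; it imports it verbatim from Zlotnik's paper \cite{A.A. Zlotnik} and uses it as a black box, so there is no in-paper proof to compare against. Your argument is correct and is the standard proof of this inequality: absolute continuity from the $W^{1,1}$ hypotheses justifies the integration, the last-crossing-time $t_1 := \sup\{s\le t : y(s)\le\bar\zeta\}$ (with the convention $t_1=0$ when the set is empty) gives $y(t_1)\le\max\{y^0,\bar\zeta\}$ and $y(s)>\bar\zeta$ on $(t_1,t]$, and on that interval $y'(s)\le -N_1+h'(s)$ integrates against the sublinear-growth hypothesis on $h$ to give $y(t)-y(t_1)\le N_0$. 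One small point of logical hygiene: you open the case split with ``if $y(s)\le\bar\zeta$ throughout $[0,t]$ there is nothing to prove, so assume $y(t)>\bar\zeta$'' — the negation of the first clause is that $y$ exceeds $\bar\zeta$ \emph{somewhere} in $[0,t]$, not necessarily at $t$. What you actually want (and what the rest of the argument uses) is the cleaner dichotomy: either $y(t)\le\bar\zeta\le M$ and you are done, or $y(t)>\bar\zeta$ and the crossing-time argument applies. This does not affect correctness, since the only case requiring work is $y(t)>\bar\zeta$, but stating the split that way would make the logic airtight.
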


\section{Proof of Theorem \ref{th1} on the Periodic Domain}
\label{sec-4}

\subsection{ Equivalence between $Z$ and $\rho$}
Throughout this section, we work under the following framework. The initial data $(\rho_0, Z_0 \mathbf{m}_0)$ satisfies \eqref{eq:initial-regularity}, and $(\rho, \mathbf{u}, Z)$ denotes the unique strong solution to \eqref{trans1} on $\mathbb{T}^2\times(0, T]$ given by Lemma \ref{lem:local-existence}.
Under the conditions of Lemma \ref{lem:local-existence}, system  is equivalent to the following system 
\begin{equation}\label{trans2}
\begin{aligned}
&\begin{cases}
\partial_t \rho + \operatorname{div} (\rho \mathbf{u}) = 0, \\[2mm]
\partial_t (\rho \mathbf{u}) + \operatorname{div} (\rho \mathbf{u} \otimes \mathbf{u}) 
- \mu \Delta \mathbf{u} - (\lambda + \mu) \nabla \operatorname{div} \mathbf{u} 
+ \nabla P(\rho, s) = 0, \\[2mm]
\partial_t (s) + \mathbf{u}\cdot{\nabla s} = 0, \\[2mm]
(\rho, \mathbf{u}, s)|_{t=0} = (\rho_0, \mathbf{u}_0, s_0).
\end{cases}
\end{aligned}
\end{equation}

In this case, the entropy equation is decoupled from the continuity equation. By relation \eqref{thetas}, it follows that the variable $\theta$ satisfies the equation 
\begin{equation}\label{temperature}
\begin{aligned}
\theta_t + \mathbf{u} \cdot \nabla \theta = 0.
\end{aligned}
\end{equation}
Under assumption \eqref{ia}, we have 
\begin{equation}\label{initial temperature}
\begin{aligned}
c_{*}\leq\theta_0\leq c^{*},
\end{aligned}
\end{equation}
By \eqref{initial temperature} and equation \eqref{temperature}, we have
\begin{equation}
\begin{aligned}
c_{*} \leq \theta(t,x) \leq c^{*} \quad \text{for all } (t,x) \in \mathbb{T}^2\times (0,\infty).
\end{aligned}
\end{equation}
From the above conditions, it follows that $Z$ is bounded:
\begin{equation}
\begin{aligned}\label{equzrho}
c_{*} \rho(t,x) \leq Z(t,x) \leq c^{*} \rho(t,x) \quad \text{for all } (t,x)\in\mathbb{T}^2\times (0,\infty).
\end{aligned}
\end{equation}

\begin{equation}\label{3.12}
\begin{aligned}
G\triangleq(2\mu+\lambda(\rho))\text{div}\mathbf{u}-(P(Z)-\overline{P(Z)}),\,\,w\triangleq\nabla^{\perp}\cdot{\mathbf{u}}=\partial_2u_1-\partial_1u_2.
\end{aligned}
\end{equation}
\begin{equation}\label{3.2}
\begin{aligned}
A_1^2(t) \triangleq  \int_{\mathbb{T}^2} \left( \omega(t)^2 + \frac{G(t)^2}{2\mu + \lambda(\rho(t))} \right) dx,
\end{aligned}
\end{equation}

\begin{equation}\label{3.3}
\begin{aligned}
A_2^2(t)\triangleq \int_{\mathbb{T}^2} \rho(t) |\dot{\mathbf{u}}(t)|^2 dx,
\end{aligned}
\end{equation}

\begin{equation}\label{3.4}
\begin{aligned}
A_3^2(t) \triangleq \int_{\mathbb{T}^2} \left( (2\mu + \lambda(\rho(t))) (\mathrm{div} \mathbf{u}(t))^2 + \mu \omega(t)^2 \right) dx,
\end{aligned}
\end{equation}
and
\begin{equation}\label{3.5}
\begin{aligned}
R_T\triangleq \sup_{0\leq t\leq T}
\|\rho(\cdot,t)\|_{L^\infty}\end{aligned}
\end{equation}
\subsection{A priori estimates (I): upper bound of density}
\subsubsection{Time-independent upper bound of the density}

The analysis begins with the derivation of the basic energy estimate.

\begin{lema}\label{lem:energy}
There exists a constant $C>0$, depending only on $c_{*},\gamma$, $\|Z_0\|_{L^\gamma}$, and $\|\sqrt{\rho_0}\mathbf{u}_0\|_{L^2}$, such that satisfies
\begin{equation}
\begin{aligned}\label{basicestimate}
\sup_{0\le t\le T}\int_{\Omega}\big(\rho|\mathbf{u}|^2+\frac{Z^\gamma}{\gamma-1}\big)\,dx
+\int_0^T\!\!\int_{\mathbb{T}^2}\Bigl((2\mu+\lambda(\rho))(\operatorname{div}\mathbf{u})^2+\mu\omega^2\Bigr)\,dx\,dt\le C.
\end{aligned}
\end{equation}

\end{lema}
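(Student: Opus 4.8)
The plan is to prove the basic energy identity for the system \eqref{trans1} (equivalently \eqref{trans2}) in the standard way, treating the pressure as a function of $Z$ alone via the decoupled transport structure. First I would multiply the momentum equation $\eqref{trans1}_2$ by $\mathbf{u}$ and integrate over $\mathbb{T}^2$. The convective terms $\partial_t(\rho\mathbf{u})\cdot\mathbf{u}+\operatorname{div}(\rho\mathbf{u}\otimes\mathbf{u})\cdot\mathbf{u}$ combine, after using the continuity equation $\eqref{trans1}_1$, into $\frac12\frac{d}{dt}\int_{\mathbb{T}^2}\rho|\mathbf{u}|^2\,dx$; this is the usual Reynolds-transport computation and needs $\eqref{trans1}_1$ in the form $\partial_t\rho+\operatorname{div}(\rho\mathbf{u})=0$. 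The viscous terms $-\mu\Delta\mathbf{u}-\nabla((\mu+\lambda)\operatorname{div}\mathbf{u})$, after integration by parts on the torus (no boundary terms), yield $\int_{\mathbb{T}^2}\bigl(\mu|\nabla\mathbf{u}|^2+(\mu+\lambda)(\operatorname{div}\mathbf{u})^2\bigr)\,dx$, which I would rewrite using the identity $|\nabla\mathbf{u}|^2=(\operatorname{div}\mathbf{u})^2+\omega^2$ valid on $\mathbb{T}^2$ (after integration) to get exactly $\int_{\mathbb{T}^2}\bigl((2\mu+\lambda(\rho))(\operatorname{div}\mathbf{u})^2+\mu\omega^2\bigr)\,dx$, the dissipation in \eqref{basicestimate}.

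Next I would handle the pressure work term $\int_{\mathbb{T}^2}\nabla P(Z)\cdot\mathbf{u}\,dx=-\int_{\mathbb{T}^2}P(Z)\operatorname{div}\mathbf{u}\,dx$. Using $Z_t+\operatorname{div}(Z\mathbf{u})=0$ from $\eqref{trans1}_3$ and $P(Z)=Z^\gamma$, one has the standard identity $\frac{d}{dt}\int_{\mathbb{T}^2}\frac{Z^\gamma}{\gamma-1}\,dx=-\int_{\mathbb{T}^2}Z^\gamma\operatorname{div}\mathbf{u}\,dx$: indeed $\partial_t\bigl(\tfrac{Z^\gamma}{\gamma-1}\bigr)=\tfrac{\gamma}{\gamma-1}Z^{\gamma-1}Z_t=-\tfrac{\gamma}{\gamma-1}Z^{\gamma-1}\operatorname{div}(Z\mathbf{u})$, and integrating by parts converts this into $-\int Z^\gamma\operatorname{div}\mathbf{u}\,dx$ after noting $\tfrac{\gamma}{\gamma-1}Z^{\gamma-1}\nabla Z\cdot\mathbf{u}=\tfrac{1}{\gamma-1}\nabla(Z^\gamma)\cdot\mathbf{u}$ and combining. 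Adding the kinetic and internal-energy identities, the two $\int Z^\gamma\operatorname{div}\mathbf{u}$ contributions cancel exactly, producing the differential identity
\begin{equation}
\frac{d}{dt}\int_{\mathbb{T}^2}\Bigl(\tfrac12\rho|\mathbf{u}|^2+\tfrac{Z^\gamma}{\gamma-1}\Bigr)\,dx+\int_{\mathbb{T}^2}\Bigl((2\mu+\lambda(\rho))(\operatorname{div}\mathbf{u})^2+\mu\omega^2\Bigr)\,dx=0.
\end{equation}
Integrating in time from $0$ to $t$ and using the initial bounds — $\|\sqrt{\rho_0}\mathbf{u}_0\|_{L^2}$ for the kinetic part, and $\int_{\mathbb{T}^2}Z_0^\gamma\,dx=\|Z_0\|_{L^\gamma}^\gamma$ for the internal part — gives \eqref{basicestimate} with $C$ depending only on these quantities and $\gamma$ (the dependence on $c_*$ enters only if one wishes to phrase the bound in terms of $\rho$ via \eqref{equzrho}, which is harmless since $\lambda(\rho)\ge0$ makes the dissipation term nonnegative regardless).

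I do not expect any genuine obstacle here: this is the classical energy estimate, and the transport structure of $Z$ makes the pressure term behave exactly as in the isentropic case with $P=Z^\gamma$. The only points requiring a little care are (i) justifying the integrations by parts and the use of the continuity/transport equations at the regularity level of Lemma \ref{lem:local-existence} — which is immediate since that lemma provides $\rho,Z\in C([0,T];H^2)$, $\mathbf{u}\in L^2(0,T;H^3)$, more than enough — and (ii) recalling that on $\mathbb{T}^2$ the identity $\int|\nabla\mathbf{u}|^2=\int\bigl((\operatorname{div}\mathbf{u})^2+\omega^2\bigr)$ holds without boundary corrections, so the dissipation assembles into precisely the stated form. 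Since $\lambda(\rho)=\rho^\beta\ge0$, the dissipation term is nonnegative and can be dropped from the left side when bounding the energy, so the supremum bound and the time-integral bound both follow from the single integrated identity.
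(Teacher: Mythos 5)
Your proof is correct and follows essentially the same route as the paper's: multiply the momentum equation $\eqref{trans1}_2$ by $\mathbf{u}$, integrate, use the torus identity $\int|\nabla\mathbf{u}|^2\,dx=\int\bigl((\operatorname{div}\mathbf{u})^2+\omega^2\bigr)\,dx$ to assemble the dissipation, derive $\frac{d}{dt}\int\frac{Z^\gamma}{\gamma-1}\,dx=-\int Z^\gamma\operatorname{div}\mathbf{u}\,dx$ from the $Z$-transport equation, and add so the pressure-work terms cancel. The only cosmetic difference is that the paper first writes the "pressure equation" $P_t+\operatorname{div}(P\mathbf{u})+(\gamma-1)P\operatorname{div}\mathbf{u}=0$ and then integrates it, whereas you differentiate $\int Z^\gamma/(\gamma-1)$ directly; these are the same computation.
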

\begin{proof}
From the $\eqref{trans1}_3$ we obtain the pressure equation
\begin{equation}\label{eq:pressure}
P_t+\operatorname{div}(P\mathbf{u})+(\gamma-1)P\operatorname{div}\mathbf{u}=0.
\end{equation}
Integrating \eqref{eq:pressure} over $\mathbb{T}^2$ gives
\begin{equation}\label{eq:pressure_int}
\frac{d}{dt}\int_{\mathbb{T}^2}\frac{Z^\gamma}{\gamma-1}\,dx+\int_{\mathbb{T}^2}P\operatorname{div}u\,dx=0.
\end{equation}
Multiplying $\eqref{trans1}_1$ by $\mathbf{u}$, integrating over $\mathbb{T}^2$, and combining with \eqref{eq:pressure_int} yields,
\begin{equation}\label{eq:energy_id}
\begin{aligned}
\frac{d}{dt}&\int_{\Omega}\Bigl(\frac12\rho|\mathbf{u}|^2+\frac{Z^\gamma}{\gamma-1}\Bigr)\,dx \\
&+\int_{\Omega}\Bigl((2\mu+\lambda(\rho))(\operatorname{div}\mathbf{u})^2+\mu\omega^2\Bigr)\,dx = 0.
\end{aligned}
\end{equation}
Integrating \eqref{eq:energy_id} over $(0,T)$ yields the desired estimate and completes the proof.
\end{proof}
First, from the continuity equation \eqref{trans1}$_1$ we have
\begin{equation}\nonumber
\begin{aligned}
\frac{d}{dt}\int_{\mathbb{T}^2} \rho(x,t)\,dx = 0,
\end{aligned}
\end{equation}
so the total mass is conserved:
\begin{equation}\nonumber
\begin{aligned}
\int_{\mathbb{T}^2} \rho(x,t)\,dx = \int_{\mathbb{T}^2} \rho_0(x)\,dx = 1.
\end{aligned}
\end{equation}
Hence
\begin{equation}\nonumber
\begin{aligned}
\|\rho(\cdot,t)\|_{L^\infty} \geq \frac{1}{|\mathbb{T}^2|} \int_{\mathbb{T}^2} \rho(x,t)\,dx \geq 1 > 0,
\end{aligned}
\end{equation}
and by the definition of $R_T$ in \eqref{3.5},
\[
R_T \geq \|\rho(\cdot,t)\|_{L^\infty} \geq 1. \tag{3.11}
\]
From the definition of $A_3(t)$ in \eqref{3.4} and the non-negativity of $\rho$, we have
\begin{equation}\label{a3u}
\begin{aligned}
\mu\|\nabla \mathbf{u}\|_{L^{2}}^{2}\leq A_{3}^{2}(t)\leq\left(2\mu+R_{T}^{\beta}\right)\|\nabla u\|_{L^{2}}^{2}, 
\end{aligned}
\end{equation}
where $R_T$ is given by \eqref{3.5}.

\begin{lema}
\label{lemma:velocity_T2}
Let $\Omega = \mathbb{T}^2$.  For any $p \ge 1$, there exists a constant $C = C(p,\gamma) > 0$ such that
\begin{equation}\label{ulp}
\begin{aligned}
\| \mathbf{u} \|_{L^p} 
\le C  \,
  \| \nabla \mathbf{u} \|_{L^2}.
\end{aligned}
\end{equation}
\end{lema}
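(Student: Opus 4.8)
The plan is to prove the $L^p$ bound for $\mathbf{u}$ on $\mathbb{T}^2$ by combining the Poincaré–Sobolev inequality from Lemma~\ref{lem:poincare-sobolev} with a control of the mean $\bar{\mathbf{u}}$ by $\|\nabla\mathbf{u}\|_{L^2}$. The first observation is that, unlike the case of a function with zero mean, here the mean $\bar{\mathbf{u}} = \int_{\mathbb{T}^2}\mathbf{u}\,dx$ need not vanish, so the second inequality in \eqref{eq:poincare-sobolev} (or a direct Sobolev embedding) must be supplemented by an estimate of $|\bar{\mathbf{u}}|$. The key device for this is Lemma~\ref{lem:poincare-type}: since the density satisfies $\int_{\mathbb{T}^2}\rho\,dx = 1$ (mass conservation, established just above the statement) and $\int_{\mathbb{T}^2}\rho^\gamma\,dx \le C\int_{\mathbb{T}^2} Z^\gamma\,dx \le C$ by the equivalence \eqref{equzrho} and the basic energy estimate \eqref{basicestimate}, the hypotheses of Lemma~\ref{lem:poincare-type} hold with $M_1, M_2$ depending only on the data. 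Hence
\begin{equation}\nonumber
\|\mathbf{u}\|_{L^2(\mathbb{T}^2)} \le C\int_{\mathbb{T}^2}\rho|\mathbf{u}|^2\,dx + C\|\nabla\mathbf{u}\|_{L^2}.
\end{equation}

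Next I would invoke the basic energy estimate again: $\int_{\mathbb{T}^2}\rho|\mathbf{u}|^2\,dx \le C$ uniformly in $t$, so the above gives $\|\mathbf{u}\|_{L^2} \le C(1 + \|\nabla\mathbf{u}\|_{L^2})$. To upgrade this to the homogeneous bound $\|\mathbf{u}\|_{L^2}\le C\|\nabla\mathbf{u}\|_{L^2}$ claimed in \eqref{ulp}, one notes that $\|\nabla\mathbf{u}\|_{L^2}$ cannot be arbitrarily small relative to $\|\mathbf{u}\|_{L^2}$ unless $\mathbf{u}$ is nearly constant; more carefully, writing $\mathbf{u} = \bar{\mathbf{u}} + (\mathbf{u} - \bar{\mathbf{u}})$, the Poincaré inequality controls $\|\mathbf{u}-\bar{\mathbf{u}}\|_{L^2}\le C\|\nabla\mathbf{u}\|_{L^2}$, and Lemma~\ref{lem:poincare-type} applied in the form above, together with $\int\rho|\mathbf{u}|^2\le C\int\rho|\mathbf{u}-\bar{\mathbf{u}}|^2 + C|\bar{\mathbf{u}}|^2$, lets one absorb: since $\int_{\mathbb{T}^2}\rho\,dx = 1$, one has $|\bar{\mathbf{u}}|^2 \le 2\int\rho|\mathbf{u}|^2 + 2\int\rho|\mathbf{u}-\bar{\mathbf{u}}|^2 \le C\int\rho|\mathbf{u}|^2 + C\|\nabla\mathbf{u}\|_{L^2}^2$ after using that $\rho^\gamma$ is bounded in $L^1$. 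Combining these and absorbing, one obtains $\|\mathbf{u}\|_{L^2} \le C\|\nabla\mathbf{u}\|_{L^2}$ (this is really the statement that the $\rho$-weighted norm plus $\|\nabla\mathbf{u}\|_{L^2}$ is an equivalent norm on $H^1$ modulo the observation that the constant function has nonzero $\rho$-weight, so there is no kernel). Then the general-$p$ bound follows from the second line of \eqref{eq:poincare-sobolev}, $\|\mathbf{u}\|_{L^p}\le Cp^{1/2}\|\mathbf{u}\|_{L^2}^{2/p}\|\mathbf{u}\|_{H^1}^{1-2/p}$, together with $\|\mathbf{u}\|_{H^1} = (\|\mathbf{u}\|_{L^2}^2 + \|\nabla\mathbf{u}\|_{L^2}^2)^{1/2}\le C\|\nabla\mathbf{u}\|_{L^2}$ from the $L^2$ estimate just proved.

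The main obstacle, such as it is, lies in the step controlling the mean $\bar{\mathbf{u}}$: one must genuinely use that the density has unit mass and bounded $L^\gamma$ norm (hence satisfies Lemma~\ref{lem:poincare-type}), since otherwise the bound would fail for $\mathbf{u}$ a nonzero constant. I would be careful to phrase this via the Poincaré-type inequality so that the constant $C$ depends only on $\gamma$ and the fixed energy/mass bounds — which by Lemma~\ref{lem:energy} depend only on $c_*,\gamma,\|Z_0\|_{L^\gamma},\|\sqrt{\rho_0}\mathbf{u}_0\|_{L^2}$ — and therefore ultimately only on $p$ and $\gamma$ as claimed (the data bounds being fixed throughout). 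Everything else is a routine chain of Hölder, Poincaré, and Sobolev estimates with no hidden difficulty.
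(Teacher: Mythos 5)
Your proposal does not close the estimate. The chain $\|\mathbf{u}\|_{L^2}\le C\int_{\mathbb{T}^2}\rho|\mathbf{u}|^2\,dx+C\|\nabla\mathbf{u}\|_{L^2}$ from Lemma~\ref{lem:poincare-type}, combined with the energy bound $\int\rho|\mathbf{u}|^2\,dx\le C$, yields only the inhomogeneous inequality $\|\mathbf{u}\|_{L^2}\le C(1+\|\nabla\mathbf{u}\|_{L^2})$, and the same is true of your estimate for $|\bar{\mathbf{u}}|$: you get $|\bar{\mathbf{u}}|^2\le C\int\rho|\mathbf{u}|^2\,dx+C\|\nabla\mathbf{u}\|_{L^2}^2\le C+C\|\nabla\mathbf{u}\|_{L^2}^2$. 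The step where you claim to ``absorb'' and deduce the homogeneous bound $\|\mathbf{u}\|_{L^2}\le C\|\nabla\mathbf{u}\|_{L^2}$ has no justification: there is no small parameter, and the additive constant cannot be dropped. The remark about the $\rho$-weighted norm being an equivalent norm on $H^1$ is true but beside the point --- norm equivalence does not let you replace $\sqrt{\int\rho|\mathbf{u}|^2}+\|\nabla\mathbf{u}\|_{L^2}$ by $\|\nabla\mathbf{u}\|_{L^2}$ alone. Indeed, for a generic $H^1$ vector field the asserted inequality is simply false (consider a nonzero constant), so the proof must exploit a structural property of the \emph{solution}, not just general functional inequalities.

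The ingredient your proposal misses is conservation of momentum. Integrating the momentum equation over $\mathbb{T}^2$ gives $\frac{d}{dt}\int_{\mathbb{T}^2}\rho\mathbf{u}\,dx=0$, so $\overline{\rho\mathbf{u}}$ is a fixed constant (implicitly normalized to zero in the paper). This is precisely what lets the paper control the mean of $\mathbf{u}$ homogeneously: writing
\begin{equation}\nonumber
\bar{\mathbf{u}}-\overline{\rho\mathbf{u}}
=\frac{1}{|\mathbb{T}^2|}\int_{\mathbb{T}^2}(\bar{\rho}-\rho)(\mathbf{u}-\bar{\mathbf{u}})\,dx,
\end{equation}
and estimating by H\"older together with the $L^\gamma$ bound on $\rho$ and Poincar\'e for $\mathbf{u}-\bar{\mathbf{u}}$, one obtains $|\bar{\mathbf{u}}-\overline{\rho\mathbf{u}}|\le C\|\rho\|_{L^\gamma}\|\nabla\mathbf{u}\|_{L^2}$, with no additive constant. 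That is the crucial mechanism, and it has no analogue in your argument; Lemma~\ref{lem:poincare-type} and the energy bound alone cannot produce it.
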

\begin{proof}
 Integrating the momentum equation $\eqref{trans1}_2$ over $\mathbb{T}^2$, we obtain
    \begin{equation}\label{monu}
\begin{aligned}
        \frac{d}{dt} \int_{\mathbb{T}^2} \rho \mathbf{u} \, dx = 0,
   \end{aligned}
\end{equation}

     To obtain uniform-in-time $L^p$ estimates for the density, we need to control the velocity solely by $\|\nabla \mathbf{u}\|_{L^2}$ without introducing time-dependent constants. Since the standard Poincar\'e inequality in the periodic setting only gives
    \begin{equation}\nonumber
\begin{aligned}
        \|\mathbf{u} - \overline{\mathbf{u}}\|_{L^p} \le C(p) \|\nabla \mathbf{u}\|_{L^2},
    \end{aligned}
\end{equation}
we replace the usual mean $\overline{\mathbf{u}}$ by the conserved momentum average $\overline{\rho \mathbf{u}}$, which is constant in time. Using the triangle inequality,
    \begin{equation}\nonumber
\begin{aligned}
        \| \mathbf{u} \|_{L^p}
        \le \| \mathbf{u} - \overline{\mathbf{u}} \|_{L^p}
          + \| \overline{\mathbf{u}} - \overline{\rho \mathbf{u}} \|_{L^p}.
    \end{aligned}
\end{equation}
    The first term is bounded by $C(p) \|\nabla \mathbf{u}\|_{L^2}$. For the second term, note that
    \begin{equation}\nonumber
\begin{aligned}
        \overline{\mathbf{u}} - \overline{\rho \mathbf{u}}
        = \frac{1}{|\mathbb{T}^2|} \int_{\mathbb{T}^2} (\rho - \overline{\rho}) (\mathbf{u} - \overline{\mathbf{u}}) \, dx,
    \end{aligned}
\end{equation}
    because $\int_{\mathbb{T}^2} (\rho - \overline{\rho}) \overline{\mathbf{u}} \, dx = 0$. Hence,
    \begin{equation}\nonumber
\begin{aligned}
        |\overline{\mathbf{u}} - \overline{\rho \mathbf{u}}|
        \le C\| \rho - \overline{\rho} \|_{L^\gamma}
           \| \mathbf{u} - \overline{\mathbf{u}} \|_{L^{\gamma'}},
    \end{aligned}
\end{equation}
    where $\gamma'$ is the conjugate exponent of $\gamma$. Applying the Poincar\'e inequality again and the energy estimate $\|\mathbf{u} - \overline{\mathbf{u}}\|_{L^{\gamma'}} \le C \|\nabla \mathbf{u}\|_{L^2}$, we obtain
     \begin{equation}\nonumber
\begin{aligned}
        |\overline{\mathbf{u}} - \overline{\rho \mathbf{u}}|
        \le C \|\rho\|_{L^\gamma} \|\nabla \mathbf{u}\|_{L^2}.
    \end{aligned}
\end{equation}
    Combining these estimates yields
    \begin{equation}\nonumber
\begin{aligned}
        \| \mathbf{u} \|_{L^p}
        \le C \big( 1 + \|\rho\|_{L^\gamma} \big) \|\nabla \mathbf{u}\|_{L^2}.
    \end{aligned}
\end{equation}

\end{proof}

Building upon the time-dependent $L^p$ integrability estimate for the density obtained by Vaigant–Kazhikhov \cite{VaigantKazhikhov}, we adopt the strategy of \cite{fanliwang} and fully exploit the damping structure embedded in the equations. This enables us to establish a uniform-in-time higher integrability bound for the density. This uniform-in-time estimate serves as a critical step toward further relaxing the restriction on the parameter $\beta$ and provides a key analytical tool for studying the long-time dynamics of solutions.
\begin{lema}\label{lem:Phi-bound}
Let $g_+(x) \triangleq  \max\{g,0\}$. For any $\alpha \in [3,\infty)$, there exist constants $M, C > 0$ depending only on $c_{*}, c^{*},\mu, \alpha, \beta, \gamma, \rho_0, \mathbf{u}_0$, and the domain $\mathbb{T}^2$, but independent of $T$, such that
\begin{equation}\label{eq:lpdensity}
\sup_{0 \le t \le T} \int_{\mathbb{T}^2} \rho f^{\alpha} \, dx \le C,
\end{equation}
where the function $f$ is defined as
\begin{equation}\label{eq:f-def}
f \triangleq \bigl( \theta(\rho) +\psi - M \bigr)_+.
\end{equation}
\end{lema}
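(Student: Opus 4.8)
The plan is to derive a differential inequality for $\frac{d}{dt}\int_{\mathbb{T}^2}\rho f^\alpha\,dx$ and show that the pressure-induced damping term dominates the error terms once the truncation parameter $M$ is chosen large. I would first introduce the auxiliary potential $\psi$ solving $\Delta\psi = \rho - \overline{\rho}$ (or the analogous effective-flux potential, with $\theta(\rho)$ playing the role of $\log\rho$ or $\rho^\beta/\beta$ depending on the regime), so that $f = (\theta(\rho)+\psi-M)_+$ satisfies, along particle trajectories, a transport-type equation of the form $\dot f = -(2\mu+\lambda(\rho))^{-1}\bigl(P(Z)-\overline{P(Z)}\bigr) + (\text{lower-order terms involving } \dot\psi)$ on the set $\{f>0\}$. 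The key structural point, inherited from the continuity equation and the definition of the effective viscous flux $G$ in \eqref{3.12}, is that the leading contribution to $\dot f$ is the \emph{negative} damping $-c\,\rho^{\gamma/\beta}$-type term after using the equivalence $Z \sim \rho$ from \eqref{equzrho} and the power law $\lambda(\rho)=\rho^\beta$; this is exactly the mechanism flagged in the introduction, $(f+M)^{\beta/\gamma}\lesssim \rho^\gamma + (\text{regular})$.

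Concretely, I would compute
\begin{equation}\nonumber
\frac{d}{dt}\int_{\mathbb{T}^2}\rho f^\alpha\,dx = \alpha\int_{\mathbb{T}^2}\rho f^{\alpha-1}\dot f\,dx,
\end{equation}
using $\partial_t(\rho f^\alpha)+\operatorname{div}(\rho f^\alpha \mathbf{u}) = \alpha\rho f^{\alpha-1}\dot f$ from the continuity equation. Substituting the expression for $\dot f$ and isolating the damping term yields
\begin{equation}\nonumber
\frac{d}{dt}\int_{\mathbb{T}^2}\rho f^\alpha\,dx + c\int_{\mathbb{T}^2}\rho (f+M)^{\gamma/\beta} f^{\alpha-1}\,dx \le \alpha\int_{\mathbb{T}^2}\rho f^{\alpha-1}\,|\dot\psi + (\text{other})|\,dx.
\end{equation}
The right-hand side must be absorbed: for the $\dot\psi$ term one uses that $\dot\psi$ solves an elliptic problem whose data is controlled by $\operatorname{div}\mathbf{u}$ and $\rho\mathbf{u}$, so by Calder\'on--Zygmund and the energy estimate \eqref{basicestimate} together with Lemma \ref{lemma:velocity_T2} one gets bounds in terms of $\|\nabla\mathbf{u}\|_{L^2}$ and $\|\rho\|_{L^\gamma}$ — both time-integrable or uniformly bounded. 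The truncation $M$ is then chosen so that on $\{f>0\}$ one has $\theta(\rho)\ge M - \|\psi\|_{L^\infty} \gg 1$, which forces $\rho$ to be large there, hence $(f+M)^{\gamma/\beta}$ genuinely controls $|\log\rho|$-type growth (small-$\rho$ contributions are handled by the regular terms, large-$\rho$ contributions by $\rho^\beta$), giving the pointwise domination $f^{\text{(power)}}\lesssim \rho^\gamma + C$. Feeding this back, a Young's inequality splits off a small multiple of the damping integral plus a constant, and one closes with Gronwall (or directly, since the damping is coercive) to get the uniform-in-time bound \eqref{eq:lpdensity}.

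The main obstacle I anticipate is the careful bookkeeping of the error terms generated by $\dot\psi$ and by the commutator between the truncation $(\,\cdot\,)_+$ and the material derivative: one must verify that every such term carries either a time-integrable factor (so that $\int_0^T$ stays bounded independently of $T$) or a factor that can be absorbed by an $\varepsilon$-fraction of the coercive damping integral $\int\rho(f+M)^{\gamma/\beta}f^{\alpha-1}\,dx$. The constraint $\alpha\ge 3$ enters precisely here, ensuring enough powers of $f$ to run Young's inequality against the $\gamma/\beta$-homogeneous damping while keeping the exponent on $\|\nabla\mathbf{u}\|_{L^2}$ below $2$; tracking the exact exponents — and confirming that the required inequality among $\alpha$, $\beta$, $\gamma$ is implied by $\beta > 4/3$ (indeed by $\beta>3/2$ in the sharper parts) — is the delicate computational heart of the argument. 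The use of the equivalence \eqref{equzrho} throughout, to replace $P(Z)$ by a power of $\rho$, is what makes the isentropic-type damping machinery of \cite{fanliwang} applicable in this non-isentropic setting.
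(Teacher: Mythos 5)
Your overall shape of argument—truncate, exhibit pressure damping, absorb errors into an $\varepsilon$-fraction of the coercive term, close via the time-integrability of the dissipation—is the right skeleton, but there are two concrete gaps that would make the proof as proposed fall apart.

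First, the auxiliary function $\psi$ appearing in the definition of $f$ is not a potential for the density. In the paper it is $\psi = \Delta^{-1}\operatorname{div}(\rho\mathbf{u})$, the mean-zero potential of the \emph{momentum}. Your proposal $\Delta\psi=\rho-\overline{\rho}$ (even with the hedging parenthetical) does not produce the central identity that drives the whole estimate: starting from $\rho\dot{\mathbf{u}}=\nabla G+\mu\nabla^\perp\omega$ and applying $\Delta^{-1}\operatorname{div}$, one gets
\begin{equation*}
G-\overline{G}=\frac{D}{Dt}\psi-[u_i,\mathcal{R}_{ij}](\rho u_j),
\end{equation*}
and it is precisely because $\psi$ is built from $\rho\mathbf{u}$ (whose time-derivative is given by the momentum equation) that the material derivative $\frac{D}{Dt}\psi$ closes up with $G$ modulo a Coifman--Rochberg--Weiss commutator. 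With $\theta(\rho)=2\mu\log\rho+\tfrac1\beta\rho^\beta$, the continuity equation gives $\frac{D}{Dt}\theta(\rho)=-(2\mu+\lambda)\operatorname{div}\mathbf{u}=-G-(P-\overline{P})$, and adding the two identities yields the clean transport equation
\begin{equation*}
\frac{D}{Dt}(\theta(\rho)+\psi)+P=[u_i,\mathcal{R}_{ij}](\rho u_j)+\overline{P}-\overline{G}.
\end{equation*}
Your version replaces this by $\dot f\approx -(2\mu+\lambda)^{-1}(P-\overline{P})$, which is the formula for $\operatorname{div}\mathbf{u}$ minus the flux contribution, not the damping that actually survives; the damping after cancellation is the full pressure $-P$, and getting this wrong means you cannot run the case analysis $(f+M)^{\gamma/\beta}\lesssim |\psi|^{\gamma/\beta}+P(Z)$ (note the exponent is $\gamma/\beta$, not $\beta/\gamma$).

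Second, once the correct identity is written down, there are three error terms to control after multiplying by $\rho f^{\alpha-1}$ and integrating: $\rho|\psi|^{\gamma/\beta}f^{\alpha-1}$ coming from the Step~1 pointwise bound, $\rho|[u_i,\mathcal{R}_{ij}](\rho u_j)|f^{\alpha-1}$, and $\rho|\overline{P}-\overline{G}|f^{\alpha-1}$. You entirely omit the commutator term, which is not a lower-order nuisance but one of the main players: it has to be estimated via the Coifman--Meyer $W^{1,p}$-commutator bound (Lemma~\ref{lem:commutator}) in tandem with the $L^{\alpha\beta+1}$-integrability of $\rho$ (Step~2), and the splitting $\rho\mathbf{u}=\rho\overline{\rho\mathbf{u}}+\rho(\mathbf{u}-\overline{\rho\mathbf{u}})$ is needed to exploit the conserved momentum average. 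Without this term you cannot see where $\alpha\ge3$ or the exponent bookkeeping actually enter. The final absorption does indeed use $\int_0^T A_3^2\,dt\le C$ from the energy estimate together with Gronwall, so that part of your outline is correct—but it sits on top of the two missing structural ingredients above.
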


\begin{proof}
Accordingly, we reformulate the momentum equations in the following form:
\begin{equation}\label{momequ1}
\rho\dot{\mathbf{u}}=\nabla G+\mu\nabla^{\perp}\omega.
\end{equation}
Since $\Delta^{-1}$ is well-defined on $\mathbb{T}^2$ for functions with zero average, 
applying $\Delta^{-1}\operatorname{div}$ to \eqref{momequ1} yields
\begin{equation}
\begin{aligned}\label{gchange}
G - \overline{G} = \frac{D}{Dt}\psi - \bigl[u_{i},\mathcal{R}_{ij}\bigr](\rho u_{j}),
\end{aligned}
\end{equation}
where 
$\psi \triangleq\Delta^{-1}\operatorname{div}(\rho \bm{u})$, and the commutator is defined by
\begin{equation}
\begin{aligned}\label{commutator}
\bigl[u_{i},\mathcal{R}_{ij}\bigr](\rho u_{j}) \triangleq
\bm{u} \cdot \nabla \Delta^{-1}\operatorname{div}(\rho \bm{u})-\Delta^{-1}\operatorname{div}\operatorname{div}(\rho \bm{u}\otimes \bm{u}).
\end{aligned}
\end{equation}
Combining $\eqref{trans1}_1$ and \eqref{gchange}, we obtain
\begin{equation}
\begin{aligned}\label{changeequ}
\frac{D}{Dt}\bigl(\theta(\rho)+\psi\bigr)+P
      =\bigl[u_{i},\mathcal{R}_{ij}\bigr](\rho u_{j})+\overline{P}-\overline{G}.
\end{aligned}
\end{equation}
Multiplying \eqref{changeequ} by $\rho f^{\alpha-1}$ and integrating over $\mathbb{T}^2$ leads to
\begin{equation}
\begin{aligned}\label{equ4.1}
\frac{d}{dt}\int_{\mathbb{T}^2}\rho f^{\alpha}\,dx
   +\int_{\mathbb{T}^2}\rho P(Z)f^{\alpha-1}\,dx
   =& \alpha\int_{\mathbb{T}^2}\rho\bigl(\bigl[u_{i},\mathcal{R}_{ij}](\rho u_{j})+\overline{P}-\overline{G}\bigl)\,f^{\alpha-1}\,dx,\\
\end{aligned}
\end{equation}

We now proceed to handle equation \eqref{equ4.1} in a detailed manner. The main steps are outlined below.

\noindent\textbf{Step 1: Dealing with the damping term}

In order to fully utilize the damping term of the equations, the second term on the right-hand side of \eqref{equ4.1} must be processed.
Let 
\begin{equation}
\begin{aligned}\nonumber
        f+M=\theta(\rho)+\psi.
\end{aligned}\end{equation}
We aim to prove:
\begin{equation}
\begin{aligned}\label{final}
       \Big(f+M\Big)^{\frac{\gamma}{\beta}}\leq C\Big(|\psi|^{\frac{\gamma}{\beta}}+P(Z)\Big).
\end{aligned}\end{equation}
\textbf{Case 1: $\rho\leq 1.$} 

Since $\rho\leq 1,$ we have $\log\rho\leq 0$ and 
\begin{equation}
\begin{aligned}\label{fgeq0}
2\mu\log\rho+\frac{1}{\beta}\rho^{\beta}+\psi-M\geq 0.\end{aligned}\end{equation}
Choose $M>10\beta.$ Then under the condition \eqref{fgeq0}, we get
\begin{equation}
\begin{aligned}\label{f1geq0}
\psi\geq (M-\frac{1}{\beta}\rho^{\beta})-2\mu\log\rho\geq -2\mu\log\rho.
\end{aligned}\end{equation}
We can then write
\begin{equation}
\begin{aligned}
f+M= \psi+\theta(\rho)\leq \psi+\frac{1}{\beta}\rho^{\beta}+2\mu|\log\rho|.
\end{aligned}\end{equation}
Using \eqref{f1geq0}, we obtain $$|\log\rho|\leq C\psi.$$
and thus
\begin{equation}
\begin{aligned}\label{rhosmall}
\Big(f+M\Big)^{\frac{\gamma}{\beta}}\leq C\Big(|\psi|^{\frac{\gamma}{\beta}}+\rho^{\gamma}\Big)\leq C\Big(|\psi|^{\frac{\gamma}{\beta}}+P(Z)\Big),
\end{aligned}\end{equation}
where, by \eqref{equzrho}, the constant $C$ depends on $c_{*}.$

\noindent\textbf{Case 2: $\rho> 1$.}

From the positivity condition $\log\rho>0,$ we obtain 
\begin{equation}
\begin{aligned}
\theta(\rho)\geq\frac{1}{\beta}\rho^{\beta}>0.
\end{aligned}\end{equation}
When $\rho > 1$ and $\beta > 1$, the elementary inequality $\log\rho \leq \rho^{\beta}$ holds, which allows us to absorb the logarithmic term into the power term. Consequently, there exists a constant $C > 0$  such that
\begin{equation}
\begin{aligned}\label{rhobig}
\Big(f+M\Big)^{\frac{\gamma}{\beta}}\leq C\Big(|\psi|^{\frac{\gamma}{\beta}}+(\theta(\rho))^{\frac{\gamma}{\beta}}\Big)\leq C\Big(|\psi|^{\frac{\gamma}{\beta}}+\rho^{\gamma}\Big)\leq C\Big(|\psi|^{\frac{\gamma}{\beta}}+P(Z)\Big).
\end{aligned}\end{equation}
Combining estimates \eqref{rhosmall} and \eqref{rhobig} yields \eqref{final}.

\noindent\textbf{Step 2: On the integrability Estimates for the density } 

We first perform a splitting of the domain, then obtain
\begin{equation}
\begin{aligned}
\int_{\mathbb{T}^2} \rho^{\alpha\beta+1} \, dx 
&= \int_{\{\rho > 1\}} \rho^{\alpha\beta+1} \, dx + \int_{\{\rho \leq 1\}} \rho^{\alpha\beta+1} \, dx \\
&\leq \int_{\{\rho > 1\}} \rho \, \big( \theta(\rho) \big)^{\alpha} \, dx + 1,\notag
\end{aligned}
\end{equation}
where we use the fact that $\theta(\rho) \sim \rho^{\beta}$ for $\rho > 1.$ 
Next, recalling the definition $f = (\theta(\rho) + \psi - M)_+$, we can bound $\theta(\rho)$ from above by $f + |\psi| + M$, leading to:
\begin{equation}
\begin{aligned}
\int_{\{\rho > 1\}} \rho \, \big( \theta(\rho) \big)^{\alpha} \, dx 
&\leq \int_{\{\rho > 1\}} \rho \, \big( f + |\psi| + M \big)^{\alpha} \, dx \\
&\leq C \left( \int_{\mathbb{T}^2} \rho f^{\alpha} \, dx + \int_{\mathbb{T}^2} \rho |\psi|^{\alpha} \, dx + M^{\alpha} \int_{\mathbb{T}^2} \rho \, dx \right).\notag
\end{aligned}
\end{equation}
Now we estimate the term $\int_{\mathbb{T}^2} \rho |\psi|^{\alpha} \, dx$. 
Applying the Sobolev embedding theorem together with elliptic estimates yields
\begin{equation}
\|\psi\|_{L^p} \leq C\|\nabla\psi\|_{L^{\frac{2p}{p+2}}}\leq\|\rho u\|_{L^{\frac{2p}{p+2}}}\\
\leq C\|\rho\|^{\frac{1}{2}}_{L^p}\|\sqrt{\rho}u\|_{L^2}\leq C\|\rho\|^{\frac{1}{2}}_{L^p}.
\end{equation} so we obtain
\begin{equation}
\int_{\mathbb{T}^2} \rho |\psi|^{\alpha} \, dx \leq C\|\rho\|_{L^{\alpha\beta+1}} \, \|\psi\|_{L^{(\alpha\beta+1)/\beta}}^{\alpha} \leq C \|\rho\|_{L^{\alpha\beta+1}}^{\frac{\alpha}{2} + 1}.
\end{equation}
Combining the estimates above, we arrive at:
\begin{equation}
\begin{aligned}
\int_{\mathbb{T}^2} \rho^{\alpha\beta+1} \, dx 
&\leq C \left( \int_{\mathbb{T}^2} \rho f^{\alpha} \, dx + \|\rho\|_{L^{\alpha\beta+1}}^{\frac{\alpha}{2} + 1} + M^{\alpha} \right).
\end{aligned}
\end{equation}
Finally, since $\frac{\alpha}{2} + 1 < \alpha\beta + 1$ for $\beta > 1$, we can apply Young’s inequality to absorb the term $\|\rho\|_{L^{\alpha\beta+1}}^{\frac{\alpha}{2} + 1}$ into the left-hand side, yielding
\begin{equation}\label{rhof}
\int_{\mathbb{T}^2} \rho^{\alpha\beta+1} \, dx \leq C \left( \int_{\mathbb{T}^2} \rho f^{\alpha} \, dx + M^{\alpha} \right).
\end{equation}
By inserting equation \eqref{final} into equation \eqref{equ4.1}, we find that
\begin{equation}
\begin{aligned}\label{final1}
\frac{d}{dt}\int_{\mathbb{T}^2}\rho f^{\alpha}\,dx
   +\int_{\mathbb{T}^2}\rho(f+M)^{\frac{\gamma}{\beta}}f^{\alpha-1}\,dx
   \le& C\int_{\mathbb{T}^2}\rho\Big(|\psi|^{\frac{\gamma}{\beta}}+\bigl|\bigl[u_{i},\mathcal{R}_{ij}\bigr](\rho u_{j})\bigr|+|\overline{P}-\overline{G}|\Big)\,f^{\alpha-1}\,dx,\\
   \triangleq& \sum_{i=1}^{3}I_i.
\end{aligned}
\end{equation}
In the following, we estimate the right-hand side term by term.

\noindent\textbf{Step 3: Estimate of $I_{1}$} 

Let us analyze the term
\begin{equation}\begin{aligned}
I_{1} =& \int_{\mathbb{T}^2} \rho \, |\psi|^{\frac{\gamma}{\beta}} \, f^{\alpha-1} \, dx \\
=&\int_{\mathbb{T}^2} \rho \, \Big|\Delta^{-1}\text{div}(\rho(\bm{u}-\overline{\rho \bm{u}}))\Big|^{\frac{\beta}{\gamma}} \, f^{\alpha-1} \, dx+\int_{\mathbb{T}^2} \rho \, \Big|\Delta^{-1}\text{div}(\rho(\overline{\rho \bm{u}}))\Big|^{\frac{\beta}{\gamma}} \, f^{\alpha-1} \, dx\\
\triangleq& I_{11}+I_{12}.
\end{aligned}\end{equation}

We shall estimate \(I_{11}\) by splitting into two terms according to the relation between the pressure exponent \(\gamma\) and the bulk‑viscosity exponent \(2\beta\).

\subsection*{Case 1: \(\gamma \leq 2\beta\)}

Since \(\beta>4/3\) and \(\alpha\ge3\), the quantity \(\frac{\gamma}{\beta}\) is not larger than 2.
We therefore first obtain an \(L^{\infty}\)-bound for \(\psi\) with the aid of elliptic regularity and Sobolev embedding inequality.

Fix a small number \(\varepsilon>0\) satisfying
\begin{equation}
\varepsilon<\begin{cases}
\gamma-1, & \text{if }1<\gamma\le 2,\\[2mm]
\gamma-2, & \text{if }\gamma>2 .\notag
\end{cases}
\end{equation}
The Sobolev embedding inequality give
\begin{equation}
\|\psi\|_{L^\infty}^2\le C(\varepsilon)\,\|\rho (\bm{u}-\overline{\rho \bm{u}})\|_{L^{2+\varepsilon}}^2
\le C(\varepsilon)\,\|\nabla \bm{u}\|_{L^2}^2\,\|\rho\|_{L^{2+2\varepsilon}}^2 .
\end{equation}
If \(\gamma\le2\), choose \(\theta\in(0,1)\) by
\begin{equation}
\frac{1}{2+2\varepsilon}= \frac{1-\theta}{\gamma}+\frac{\theta}{\alpha\beta+1}.\notag
\end{equation}
Because $\varepsilon<\gamma-1$ one has $\frac{2\theta}{\alpha\beta+1}<\frac1\alpha$.
Applying \eqref{rhof} we obtain
\begin{equation}
\|\rho\|_{L^{2+2\varepsilon}}^2\le C\|\rho\|_{L^{\alpha\beta+1}}^{2\theta}
\le C\Bigl(\int_{\mathbb{T}^2}\rho f^{\alpha}\,dx+M^{\alpha}\Bigr)^{\frac1\alpha}.
\end{equation}
If \(\gamma>2\), the choice \(\varepsilon<\gamma-2\) together with the energy estimate \eqref{basicestimate} and \eqref{equzrho} directly yields
\begin{equation}
\|\rho\|_{L^{2+2\varepsilon}}^2\le C\|\rho\|_{L^\gamma}^2\le C .\notag
\end{equation}
Consequently, it follows that
\begin{equation}\label{psi1}
\|\psi\|_{L^\infty}^2\le C\,A_3^2\Bigl(\int_{\mathbb{T}^2}\rho f^{\alpha}\,dx+1\Bigr)^{\frac1\alpha}.
\end{equation}
Insert \eqref{psi1} into $I_{11}$:
\begin{equation}
\begin{aligned}
I_{11}
&\le C\|\psi\|_{L^\infty}^2\int_{\mathbb{T}^2}\rho f^{\alpha-1}\,dx
+C\int_{\mathbb{T}^2}\rho f^{\alpha-1}\,dx \\[1mm]
&\le C\,A_3^2\Bigl(\int_{\mathbb{T}^2}\rho f^{\alpha}\,dx+1\Bigr)
      +C\int_{\mathbb{T}^2}\rho f^{\alpha-1}\,dx .
\end{aligned}
\end{equation}
The last integral can be absorbed by the damping term
\(\displaystyle\int_{\mathbb{T}^2}\rho(f+M)^{\frac{\gamma}{\beta}}f^{\alpha-1}dx\) provided \(M\) is chosen large enough, precisely, taking
\begin{equation}
M>\bigl(C\varepsilon^{-1}\bigr)^{\beta/\gamma},\notag
\end{equation}
we obtain
\begin{equation}\label{3.16}
I_{11}\le C\,A_3^2\Bigl(\int_{\mathbb{T}^2}\rho f^{\alpha}\,dx+1\Bigr)
      +\varepsilon\int_{\mathbb{T}^2}\rho(f+M)^{\frac{\gamma}{\beta}}f^{\alpha-1}\,dx .
\end{equation}

\subsection*{Case 2 \(\gamma>2\beta\)}

When \(\frac{\gamma}{\beta}>2\), we can utilize the integrability of the momentum \(\rho u\) to lower the power of \(\psi\) in exchange for control by the energy norm.

From the basic energy estimate \eqref{basicestimate} we know
\begin{equation}
\sup_{0\le t\le T}\|\rho (\bm{u}-\overline{\rho \bm{u}})\|_{L^{2\gamma/(\gamma+1)}}\le C \sup_{0\le t\le T}\|\rho^{\frac{1}{2}}\|_{L^{2\gamma}}\|\rho^{\frac{1}{2}}(\bm{u}-\overline{\rho \bm{u}})\|_{L^2}\leq C.
\end{equation}
Fix a number $\varepsilon>0$ satisfying
\begin{equation}\nonumber
\frac{\gamma+1}{2\gamma}\Bigl(1-\frac{2\beta}{\gamma}\Bigr)
+\frac{1}{\gamma-\varepsilon}\,\frac{2\beta}{\gamma}<\frac12 .
\end{equation}
Such an $\varepsilon$ exists because for $\gamma>2\beta>2$,
\begin{equation}\nonumber
\frac{\gamma+1}{2\gamma}\Bigl(1-\frac{2\beta}{\gamma}\Bigr)
+\frac{1}{\gamma}\,\frac{2\beta}{\gamma}
= \frac{1}{\gamma}+\frac{1}{2\beta}<1 .
\end{equation}
Applying Sobolev embedding inequality yields
\begin{equation}\label{3.28}
\begin{aligned}
\|\psi\|_{L^\infty}^{\frac{\gamma}{\beta}}
&\le C(\varepsilon)\,
    \|\rho (\bm{u}-\overline{\rho\bm{u}})\|_{L^{2\gamma/(\gamma+1)}}^{\frac{\gamma}{\beta}-2}
    \|\rho(\bm{u}-\overline{\rho\bm{u}})\|_{L^{\gamma-\varepsilon}}^{2} \\
&\le C\|\nabla \mathbf{u}\|_{L^2}^{2}
\le C\,A_3^{2}.
\end{aligned}
\end{equation}
Using \eqref{3.28},
\begin{equation}
\begin{aligned}\label{4.16}
I_{11}
&\le \|\psi\|_{L^\infty}^{\frac{\gamma}{\beta}}
     \int_{\mathbb{T}^2}\rho f^{\alpha-1}\,dx \\[1mm]
&\le C\,A_3^{2}\Bigl(\int_{\mathbb{T}^2}\rho f^{\alpha}\,dx+1\Bigr).
\end{aligned}
\end{equation}
No absorption by the damping term is needed in this case.

Combining \eqref{3.16} and \eqref{4.16} we conclude:
For any \(\varepsilon>0\) there exist constants
$M>\bigl(C\varepsilon^{-1}\bigr)^{\beta/\gamma}$ such that
\begin{equation}\label{3.30}
I_{11}\le C\,A_1^{2}\Bigl(\int_{\mathbb{T}^2}\rho f^{\alpha}\,dx+1\Bigr)
      +\varepsilon\int_{\mathbb{T}^2}\rho(f+M)^{\frac{\gamma}{\beta}}f^{\alpha-1}\,dx .
\end{equation}
For the second part $I_{12}$, we choose $p = \beta\bigl(\alpha-1+\frac{\gamma}{\beta^{2}}\bigr) > 2$. Applying the uniform $L^{p}$ estimate () of the density, we obtain:
\begin{equation}
\begin{aligned}
I_{1,2} &\le \overline{\rho \mathbf{u}}\,\bigl\|\Delta^{-1}\nabla\rho\bigr\|_{L^{\infty}}^{\frac{\gamma}{\beta}}
          \int_{\mathbb{T}^2}\rho\,f^{\alpha-1}\,dx \\
        &\le C\Bigl(\int_{\mathbb{T}^2}\rho^{p\beta+1}\,dx\Bigr)^{\frac{\gamma}{\beta(p\beta+1)}}
          \int_{\mathbb{T}^2}\rho\,f^{\alpha-1}\,dx \\
          &\le C\Bigl(\int_{\mathbb{T}^2}\rho^{p\beta+1}\,dx\Bigr)^{\frac{\gamma}{p\beta^2}}
          \int_{\mathbb{T}^2}\rho\,f^{\alpha-1}\,dx \\
        &\le C\Bigl(\int_{\mathbb{T}^2}\rho\,f^{p}\,dx+M^{p}\Bigr)^{\frac{\gamma}{p\beta^{2}}}
          \int_{\mathbb{T}^2}\rho\,f^{\alpha-1}\,dx .
\end{aligned}
\end{equation}
By choosing the threshold $M$ sufficiently large, specifically
\begin{equation}
\begin{aligned}
M > \Bigl(\frac{C}{\varepsilon}\Bigr)^{\frac{\beta^{2}}{\gamma\beta-\gamma}},\notag
\end{aligned}
\end{equation}
we finally obtain the desired bound:
\begin{equation}
\begin{aligned}\label{3.31}
I_{1,2} \le \varepsilon\int_{\mathbb{T}^2}\rho\,(f+M)^{\frac{\gamma}{\beta}}\,f^{\alpha-1}\,dx .
\end{aligned}
\end{equation}
From a combination of equations \eqref{3.30} and \eqref{3.31}, it follows that
\begin{equation}
\begin{aligned}\label{iall}
I_{1}\le C\,A_3^{2}\Bigl(\int_{\mathbb{T}^2}\rho f^{\alpha}\,dx+1\Bigr)
      +\varepsilon\int_{\mathbb{T}^2}\rho(f+M)^{\frac{\gamma}{\beta}}f^{\alpha-1}\,dx .
\end{aligned}
\end{equation}
\textbf{Step 4: Estimate of $I_{2}$}

We first state a commutator estimate in \eqref{eq:commutator-gradient} that will be used repeatedly.

 To handle $I_2$, we split it into two parts according to the decomposition $\rho \bm{u} = \rho\overline{\rho \bm{u}} + \rho(\bm{u}-\overline{\rho \bm{u}})$:
\begin{equation}
\begin{aligned}
I_2 & = \int_{\mathbb{T}^2}\rho\big|[u_i,\mathcal{R}_{ij}](\rho\,\overline{\rho u})\big|f^{\alpha-1}\,dx \\
    & \quad + \int_{\mathbb{T}^2}\rho\big|[u_i,\mathcal{R}_{ij}]\big(\rho(\bm{u}-\overline{\rho u})\big)\big|f^{\alpha-1}\,dx.
\end{aligned}
\label{eq:J2_split}
\end{equation}

We estimate these two terms separately. For the first term, we take $q = \beta(\alpha-1+1/\beta) > 2$ and apply H\"older's inequality, the commutator estimate \eqref{eq:commutator-gradient}, and the density estimate \eqref{rhof}:

\begin{equation}
\begin{aligned}\label{com1}
& \int_{\mathbb{T}^2}\rho\big|[u_i,\mathcal{R}_{ij}](\rho\,\overline{\rho \bm{u}})\big|f^{\alpha-1}\,dx \\
& \quad\le C\Big(\int_{\mathbb{T}^2}\rho\big|[u_i,\mathcal{R}_{ij}](\rho\,\overline{\rho \bm{u}})\big|^{q}dx\Big)^{1/q}
        \Big(\int_{\mathbb{T}^2}\rho  
        f^{\alpha-1+1/\beta}dx\Big)^{1-1/q} \\
& \quad\le C\Big(\int_{\mathbb{T}^2}\rho^{q+1}dx\Big)^{1/(q^{2}+q)}
        \Big(\int_{\mathbb{T}^2}\big|[u_i,\mathcal{R}_{ij}](\rho\,\overline{\rho \bm{u}})\big|^{q+1}dx\Big)^{1/(q+1)}
        \Big(\int_{\mathbb{T}^2}\rho f^{\alpha-1+1/\beta}dx\Big)^{1-1/q} \\
& \quad\le C\|\nabla \bm{u}\|_{L^{2}}
        \Big(\int_{\mathbb{T}^2}\rho^{q+1}dx\Big)^{1/q}
        \Big(\int_{\mathbb{T}^2}\rho f^{\alpha-1+1/\beta}dx\Big)^{1-1/q}\\
        &\quad\leq C\|\nabla \mathbf{u}\|_{L^{2}}\bigg(\int_{\Omega}\rho f^{\alpha-1+1/\beta}dx+C\bigg)^{1/q}\bigg(\int_{\Omega}\rho f^{\alpha-1+1/\beta}dx\bigg)^{1-1/q}  \\
&\quad\leq C\|\nabla \mathbf{u}\|_{L^{2}}\bigg(\int_{\mathbb{T}^2}\rho f^{\alpha-1+1/\beta}dx+C\bigg)^{1/2}\bigg(\int_{\mathbb{T}^2}\rho f^{\alpha-1+1/\beta}dx\bigg)^{1/2} \\
&\quad\leq C\|\nabla \mathbf{u}\|_{L^{2}}^{2}\bigg(\int_{\mathbb{T}^2}\rho f^{\alpha}dx+1\bigg)+\varepsilon\int_{\mathbb{T}^2}\rho(f+M)^{\frac{\gamma}{\beta}}f^{\alpha-1}dx,
\end{aligned}
\end{equation}
provied $M>(\frac{C}{\epsilon})^{\frac{\beta}{\gamma-1}}.$

For the second term in \eqref{eq:J2_split}, we apply similar techniques. By H\"older's inequality and the commutator estimate again,

\begin{equation}
\begin{aligned}
& \int_{\mathbb{T}^2}\rho\big|[u_i,\mathcal{R}_{ij}]\big(\rho(\bm{u}-\overline{\rho \bm{u}})\big)\big|f^{\alpha-1}\,dx \\
& \quad\le C\Big(\int_{\mathbb{T}^2}\rho\big|[u_i,\mathcal{R}_{ij}]\big(\rho(\bm{u}-\overline{\rho \bm{u}})\big)\big|^{q}dx\Big)^{1/q}
        \Big(\int_{\mathbb{T}^2}\rho f^{\alpha-1+1/\beta}dx\Big)^{1-1/q} \\
& \quad\le C\Big(\int_{\mathbb{T}^2}\rho^{q+1}dx\Big)^{1/(q^{2}+q)}
        \Big(\|\nabla \bm{u}\|_{L^{2}}\|\rho(\bm{u}-\overline{\rho \bm{u}})\|_{L^{q+1}}\Big)
        \Big(\int_{\mathbb{T}^2}\rho f^{\alpha-1+1/\beta}dx\Big)^{1-1/q}.
\end{aligned}
\end{equation}
Using the fact that $ \alpha-1+1/\beta < \alpha$, we obtain

\begin{equation}
\int_{\mathbb{T}^2}\rho\big|[u_i,\mathcal{R}_{ij}]\big(\rho(\bm{u}-\overline{\rho \bm{u}})\big)\big|f^{\alpha-1}\,dx
\le C\|\nabla \bm{u}\|_{L^{2}}^{2}
\Big(\int_{\mathbb{T}^2}\rho f^{\alpha}dx+1\Big).
\label{eq:second_term_final}
\end{equation}
Combining \eqref{com1} and \eqref{eq:second_term_final}, we conclude that,
$$
I_2 \le C A_1^2 \Big(\int_{\mathbb{T}^2}\rho f^{\alpha}dx+1\Big) + \varepsilon\int_{\mathbb{T}^2}\rho(f+M)^{\frac{\gamma}{\beta}}f^{\alpha-1}dx.
$$

\noindent\textbf{Step 5: Estimate of $I_3$}
Applying H\"older inequality  yields
\begin{equation}
\begin{aligned}
|\overline{P}-\overline{G}| &\leq C \left( \int_{\mathbb{T}^2} \rho^{\beta} (\operatorname{div} \bm{u})^2 dx \right)^{\frac{1}{2}} \left( \int_{\mathbb{T}^2} \rho^{\beta} dx \right)^{\frac{1}{2}} + C \\
& \leq C A_3^2\left( \int_{\mathbb{T}^2} \rho f^{\alpha} dx + M^{\alpha} \right)^{\frac{\beta}{\alpha\beta+1}}+C.
\end{aligned}\end{equation}
Now we estimate $I_3$:
\begin{equation}
\begin{aligned}
I_3 &= \int_{\mathbb{T}^2} \rho |\overline{P}-\overline{G}| f^{\alpha-1} dx \\
&\leq C A_3^2 \left( \int_{\mathbb{T}^2} \rho f^{\alpha} dx + 1 \right)^{\frac{\beta}{\alpha\beta+1}} \int_{\mathbb{T}^2} \rho f^{\alpha-1} dx + C \int_{\mathbb{T}^2} \rho f^{\alpha-1} dx\\
&\leq C A_3^2 \left( \int_{\mathbb{T}^2} \rho f^{\alpha} dx + 1 \right)^{\frac{\beta}{\alpha\beta+1} + \frac{\alpha-1}{\alpha}} + C \int_{\mathbb{T}^2} \rho f^{\alpha} dx\\
&\leq C A_3^2 \left( \int_{\mathbb{T}^2} \rho f^{\alpha} dx + 1 \right) + \varepsilon \int_{\mathbb{T}^2} \rho (f + M)^{\frac{\gamma}{\beta}} f^{\alpha-1} dx.
\end{aligned}
\end{equation}
where $M>(C\epsilon)^{\frac{\beta}{\gamma}}.$
This estimate shows that the contribution from $\overline{P}-\overline{G}$ is controlled by the energy dissipation $A_3^2$ and can be absorbed by the damping term, which is crucial for establishing uniform bounds on the density $\rho$.
\begin{equation}
\begin{aligned}
|I_1|+|I_2|+|I_3|\leq C A_3^2 \left( \int_{\mathbb{T}^2} \rho f^\alpha dx + 1 \right) + \varepsilon \int_{\mathbb{T}^2} \rho (f + M)^{\frac{\gamma}{\beta}} f^{\alpha-1} dx.
\end{aligned}
\end{equation}
Substituting this inequality into \eqref{final1} and integrating over $(0,T)$ implies \eqref{eq:lpdensity}.
\end{proof}
Hence we immediately obtain the following lemma.

\begin{lema}\label{cor:rho-Lp-estimate}
For any $c_{*}, c^{*},\alpha \ge 3$, there exist positive constants $C$ independent of $T$ depending only on $\mu,\alpha,\beta,\gamma,\rho_0,\mathbf{u}_0, Z_0$, and $\mathbb{T}^2$ such that
\begin{equation}
\begin{aligned}\label{rhohigh1}
\sup_{0\le t\le T} \|\rho(\cdot,t)\|_{L^{\alpha\beta+1}} \le\; C .
\end{aligned}
\end{equation}
\end{lema}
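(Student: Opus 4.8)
The plan is to read the conclusion off directly from two facts already established, namely the uniform bound \eqref{eq:lpdensity} on $\int_{\mathbb{T}^2}\rho f^{\alpha}\,dx$ from Lemma \ref{lem:Phi-bound}, and the algebraic inequality \eqref{rhof}, which controls $\int_{\mathbb{T}^2}\rho^{\alpha\beta+1}\,dx$ by $\int_{\mathbb{T}^2}\rho f^{\alpha}\,dx + M^{\alpha}$. First I would fix $\alpha\ge 3$ and apply Lemma \ref{lem:Phi-bound} to obtain the threshold $M$ and the constant $C$, both depending only on $c_*,c^*,\mu,\alpha,\beta,\gamma,\rho_0,\mathbf{u}_0$ and $\mathbb{T}^2$ (and, crucially, \emph{not} on $T$), for which \eqref{eq:lpdensity} holds. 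One should note that this $M$ is precisely the truncation level entering the definition \eqref{eq:f-def} of $f$, so that \eqref{rhof} applies with exactly the same $M$.

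Next, inserting \eqref{eq:lpdensity} into the right-hand side of \eqref{rhof} and using that $M^{\alpha}$ is a fixed constant of the admissible data, I would conclude
\[
\sup_{0\le t\le T}\int_{\mathbb{T}^2}\rho^{\alpha\beta+1}\,dx
\le C\Bigl(\sup_{0\le t\le T}\int_{\mathbb{T}^2}\rho f^{\alpha}\,dx + M^{\alpha}\Bigr)\le C,
\]
with $C$ still independent of $T$. Taking the $(\alpha\beta+1)$-th root yields \eqref{rhohigh1}. Since $\beta>1$ and $\alpha\ge 3$ force $\alpha\beta+1>2$, this provides in particular a uniform-in-time bound in an $L^{p}$ space with $p>2$, which is exactly the input required for the subsequent elliptic and commutator estimates.

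There is essentially no obstacle here: the statement is a bookkeeping corollary of Lemma \ref{lem:Phi-bound}. The only point needing a word of care is the consistency of the truncation constant $M$ across \eqref{eq:f-def}, \eqref{rhof} and \eqref{eq:lpdensity} — all three must refer to the single $M$ chosen sufficiently large in the proof of Lemma \ref{lem:Phi-bound} — together with the observation that $M$ depends only on the listed data, so that raising it to the power $\alpha$ introduces no $T$-dependence into the final constant.
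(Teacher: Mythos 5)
Your proposal is correct and follows exactly the same route as the paper: combine \eqref{rhof} with the uniform bound \eqref{eq:lpdensity} from Lemma~\ref{lem:Phi-bound} and take the supremum in time. The paper's own proof is a one-line version of this; your added remarks about the consistency of the truncation constant $M$ and the observation that $\alpha\beta+1>2$ are harmless elaborations.
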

\begin{proof}
From \eqref{rhof} together with Lemma~\ref{lem:Phi-bound} we obtain
\begin{equation}\label{eq:sup-rho-alpha-beta}
\sup_{0\le t\le T}\int_{\mathbb{T}^2}\rho^{\alpha\beta+1}dx 
\le C\sup_{0\le t\le T}\Bigl(\int_{\mathbb{T}^2}\rho f^{\alpha}dx\Bigr)+C\le C .\notag
\end{equation}
\end{proof}

\begin{lema}\label{basiclogestiamte}
For any $\alpha\in(0,1)$, there is a positive constant $C(\alpha)$ depending only on $c_{*}, c^{*},\alpha,\mu,\beta,\gamma,\|\rho_{0}\|_{L^{\infty}}$, and $\|u_{0}\|_{H^{1}}$ such that
\begin{equation}
\begin{aligned}\label{logestimate}
\sup_{0\leq t\leq T}\log(e+A_{1}^{2}(t)+A_{3}^{2}(t))+\int\limits_{0}^{T}\frac{A_{2}^{2}(t)}{e+A_{1}^{2}(t)}dt\leq C(\alpha)R_{T}^{1+\alpha\beta}.
\end{aligned}
\end{equation}

\end{lema}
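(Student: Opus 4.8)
Set $E(t):=e+A_1^2(t)+A_3^2(t)$. Two structural facts turn this into a Gronwall argument. First, since the uniform density bounds of Lemma~\ref{cor:rho-Lp-estimate} hold for every finite exponent, we have $\|P(Z)\|_{L^r}\le C$ for every finite $r$, uniformly in $T$ (using $Z\le c^{*}\rho$); in particular $\|P(Z)-\overline{P(Z)}\|_{L^2}\le C$. Second, from the definitions \eqref{3.2}--\eqref{3.4} and $G=(2\mu+\lambda)\operatorname{div}\mathbf{u}-(P(Z)-\overline{P(Z)})$ one deduces the two-sided comparison $c\,E(t)\le e+A_1^2(t)\le C\,E(t)$, with $c,C$ depending only on $\mu$ and the above pressure bound. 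Hence it suffices to establish a differential inequality of the form
\begin{equation}\label{plan-di}
\frac{d}{dt}E(t)+c_{0}A_2^2(t)\ \le\ C\,R_T^{1+\alpha\beta}\,E(t)\,A_3^2(t),\qquad c_0>0,
\end{equation}
possibly after replacing $E$ by the comparable quantity $\tfrac12 A_3^2-\int P\operatorname{div}\mathbf{u}+e$ plus a fixed constant, which is the object the computation below produces directly. Dividing \eqref{plan-di} by $E(t)$, integrating over $(0,T)$, using $\int_0^T A_3^2\,dt\le C$ from Lemma~\ref{lem:energy}, and applying Gronwall yields $\sup_{[0,T]}\log E(t)+\int_0^T A_2^2/E\,dt\le \log E(0)+C R_T^{1+\alpha\beta}$. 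The comparison converts $\int_0^T A_2^2/E\,dt$ into the desired $\int_0^T A_2^2/(e+A_1^2)\,dt$, while $E(0)$ is controlled by $\mu,\beta,\gamma,c^{*},\|\rho_0\|_{L^\infty},\|\mathbf{u}_0\|_{H^1}$ alone (using $\|Z_0\|_{L^\infty}\le c^{*}\|\rho_0\|_{L^\infty}$), which gives \eqref{logestimate}.

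\textbf{Deriving \eqref{plan-di}.} I would test the momentum equation against the material derivative $\dot{\mathbf{u}}=\mathbf{u}_t+\mathbf{u}\cdot\nabla\mathbf{u}$ (the Hoff multiplier) and integrate over $\mathbb{T}^2$. Integrating by parts and using the transport identity $\lambda(\rho)_t=-\lambda'(\rho)\operatorname{div}(\rho\mathbf{u})$ together with the pressure equation \eqref{eq:pressure}, the $\lambda_t$-contributions cancel and the left-hand side collapses to $\tfrac12\frac{d}{dt}\big(A_3^2-2\int P\operatorname{div}\mathbf{u}\big)+\int_{\mathbb{T}^2}\rho|\dot{\mathbf{u}}|^2\,dx$, while the right-hand side reduces to trilinear terms of the type $\int(2\mu+\lambda)|\nabla\mathbf{u}|^3$ together with bilinear terms $\int|P|\,|\nabla\mathbf{u}|^2$. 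The $-\int P\operatorname{div}\mathbf{u}$ piece is comparable to $A_3^2$ up to an additive constant (again by $\|P\|_{L^2}\le C$) and is absorbed into $E$. For the elliptic control, applying $\operatorname{div}$ and $\operatorname{rot}$ to the effective-flux form \eqref{momequ1}, $\rho\dot{\mathbf{u}}=\nabla G+\mu\nabla^{\perp}\omega$, gives the $\nabla\rho$-free identities $\Delta G=\operatorname{div}(\rho\dot{\mathbf{u}})$, $\mu\Delta\omega=\operatorname{rot}(\rho\dot{\mathbf{u}})$, hence $\|\nabla G\|_{L^r}+\mu\|\nabla\omega\|_{L^r}\le C(r)\|\rho\dot{\mathbf{u}}\|_{L^r}$ and in particular $\|\nabla G\|_{L^2}+\|\nabla\omega\|_{L^2}\le C R_T^{1/2}A_2$. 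Together with $\operatorname{div}\mathbf{u}=(G+P-\overline P)/(2\mu+\lambda)$, $\operatorname{rot}\mathbf{u}=\omega$, Gagliardo--Nirenberg, Lemma~\ref{lemma:velocity_T2} for factors of $\mathbf{u}$, and the uniform density norms, these control $\|\nabla\mathbf{u}\|_{L^p}$ in terms of $A_1$, $A_2$ and $R_T$.

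\textbf{The core estimate.} Controlling the trilinear right-hand side so that the emerging power of $R_T$ is \emph{exactly} $1+\alpha\beta$ is the heart of the proof. The bilinear pressure terms are harmless because $\|P(Z)\|_{L^r}\le C$ uniformly, so the only source of $R_T$ is the coefficient $2\mu+\lambda(\rho)\le 2\mu+R_T^{\beta}$ appearing in $\int(2\mu+\lambda)|\nabla\mathbf{u}|^3$. The essential points are: (i) never replace the weighted quantity $\int G^2/(2\mu+\lambda)\le A_1^2$ by the unweighted $\|G\|_{L^2}^2$, which would cost a spurious $R_T^{\beta}$; (ii) interpolate $|\nabla\mathbf{u}|$ between $L^2$ (bounded by $E$) and higher $L^p$ (bounded through $G,\omega$ and the $L^p$ density norms), then use Young's inequality to split off $\varepsilon A_2^2$, \emph{choosing the density integrability exponent in Lemma~\ref{cor:rho-Lp-estimate} in terms of the free parameter $\alpha\in(0,1)$} so that the residual $R_T$-exponent drops to $1+\alpha\beta$; this is precisely where the hypotheses $\beta>3/2$, $\gamma>1$ are consumed. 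Absorbing $\varepsilon A_2^2$ on the left and the remaining $A_3^2$-factor against $E(t)$ produces \eqref{plan-di}.

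\textbf{Main obstacle.} The genuine difficulty is the core estimate: the sharp bookkeeping of powers of $R_T=\sup_{[0,T]}\|\rho\|_{L^\infty}$ in $\int(2\mu+\lambda)|\nabla\mathbf{u}|^3$ (and in any residual term pairing $\lambda(\rho)$ with a derivative of $\mathbf{u}$) — a crude treatment produces a power such as $R_T^{2\beta+1}$, far above $R_T^{1+\alpha\beta}$, and one must exploit both the weighted structure of $A_1^2$ and the full strength of the uniform $L^p$ density bounds to recover the correct exponent. Everything else — the Hoff computation with the $\lambda_t$-cancellations, the elliptic bounds for $G,\omega$, dividing by $E$, Gronwall, the two-sided comparison, and the data bound for $E(0)$ — is routine given the lemmas already established.
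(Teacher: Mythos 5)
Your high-level skeleton matches the paper: you test the effective-flux form $\rho\dot{\mathbf{u}}=\nabla G+\mu\nabla^{\perp}\omega$ against $\dot{\mathbf{u}}$, use the $\nabla\rho$-free elliptic identities $\Delta G=\operatorname{div}(\rho\dot{\mathbf{u}})$, $\mu\Delta\omega=\nabla^{\perp}\cdot(\rho\dot{\mathbf{u}})$ to get $\|\nabla G\|_{L^p}+\|\nabla\omega\|_{L^p}\le C\|\rho\dot{\mathbf{u}}\|_{L^p}$, insist on preserving the weighted structure of $A_1^2$, divide by a comparable quantity, integrate, and invoke Gronwall. That is the route the paper takes, and the comparison $e+A_1^2\sim e+A_1^2+A_3^2$ you want is exactly \eqref{a1a3}.

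However, in the step you flag as the ``main obstacle'' --- the mechanism producing the exponent $1+\alpha\beta$ --- your account does not match and would not deliver the claimed bound. You attribute the free parameter $\alpha\in(0,1)$ to ``choosing the density integrability exponent in Lemma~\ref{cor:rho-Lp-estimate}.'' But that lemma's parameter (there $\alpha\ge3$) is an unrelated dummy variable: the uniform $L^p$ density bound is used only for harmless pressure factors such as $\|(P+1)/(2\mu+\lambda)\|_{L^{2+\beta/\gamma}}$ and contributes no power of $R_T$. Likewise, interpolating $|\nabla\mathbf{u}|$ between $L^2$ and higher $L^p$ inside $\int(2\mu+\lambda)|\nabla\mathbf{u}|^3$ does not by itself dislodge the $\lambda(\rho)\le R_T^\beta$ coefficient. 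The actual source of $R_T^{\alpha\beta/2}$ (which becomes $R_T^{\alpha\beta}$ after Young/Cauchy) is an interpolation performed directly on the effective flux $G$, trading the weighted norm $\|G/\sqrt{2\mu+\lambda}\|_{L^2}\le A_1$ against the unweighted $\|G\|_{L^2}\le R_T^{\beta/2}A_1$:
\begin{equation}
\Big\|\frac{G^{2}}{\sqrt{2\mu+\lambda}}\Big\|_{L^{2}}
\le\Big\|\frac{G}{\sqrt{2\mu+\lambda}}\Big\|_{L^{2}}^{1-\alpha}\|G\|_{L^{2(1+\alpha)/\alpha}}^{1+\alpha}
\le C(\alpha)A_{1}^{1-\alpha}\|G\|_{L^{2}}^{\alpha}\|G\|_{H^{1}}
\le C(\alpha)A_{1}R_{T}^{\alpha\beta/2}\|G\|_{H^{1}},
\end{equation}
via H\"older with exponents $2/(1-\alpha)$, $2/\alpha$ and Gagliardo--Nirenberg; the remaining $R_T$ comes from $\|G\|_{H^{1}}+\|\omega\|_{H^{1}}\lesssim R_T^{1/2}A_2+A_3$ combined with Young's inequality. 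You also assert that $\beta>3/2$ is ``consumed'' here; it is not --- this lemma holds with only $\beta>1$ and merely produces an $R_T^{1+\alpha\beta}$ right-hand side, and $\beta>3/2$ enters only afterwards, in Lemma~\ref{rhodependent}, where Zlotnik's lemma closes the bootstrap. So as written your plan for the core estimate leaves a genuine gap; the missing ingredient is the weighted/unweighted interpolation on $G$ above.
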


\begin{proof}
Direct calculation gives,
\begin{equation}
\nabla^{\perp}\cdot\dot{\mathbf{u}}=\frac{D}{Dt}\omega-(\partial_{1}u\cdot\nabla)u_{2}+(\partial_{2}u\cdot\nabla)u_{1}
=\frac{D}{Dt}\omega+\omega\text{div}\mathbf{u}, 
\end{equation}
and that
\begin{equation}
\begin{aligned}
\text{div}\dot{\mathbf{u}}
&=\frac{D}{Dt}\text{div}\mathbf{u}+(\partial_{1}u\cdot\nabla)u_{1}+(\partial_{2}u\cdot\nabla)u_{2}\\
&=\frac{D}{Dt}\left(\frac{G}{2\mu+\lambda}\right)+\frac{D}{Dt}\left(\frac{P-\overline{P}}{2\mu+\lambda}\right)-2\nabla u_{1}\cdot\nabla^{\perp}u_{2}+(\text{div}u)^{2}.
\end{aligned}
\end{equation}
The momentum equations can therefore be rewritten as
\begin{equation}\label{momequ}
\rho\dot{\mathbf{u}}=\nabla G+\mu\nabla^{\perp}\omega.
\end{equation}
Multiplying \eqref{momequ} by $2\dot{\mathbf{u}}$ and integrating over $\mathbb{T}^2$ leads to
\begin{equation}
\begin{aligned}\label{second}
\frac{d}{dt}A_{1}^{2}+2A_{2}^{2}
&=-\mu\int\omega^{2}\text{div}\mathbf{u}\,dx+4\int G\,\nabla u_{1}\cdot\nabla^{\perp}u_{2}\,dx\\
&\quad-2\int G(\text{div}\mathbf{u})^{2}dx-\int\frac{(\beta-1)\lambda-2\mu}{(2\mu+\lambda)^{2}}G^{2}\text{div}\mathbf{u}\,dx\\
&\quad-2\beta\int\frac{\lambda(P-\overline{P})}{(2\mu+\lambda)^{2}}G\,\text{div}\mathbf{u}\,dx+2\gamma\int\frac{P}{2\mu+\lambda}G\,\text{div}\mathbf{u}\,dx\\
&\quad+2(\gamma-1)\int P\,\text{div}\mathbf{u}\,dx\int\frac{G}{2\mu+\lambda}dx\triangleq\sum_{i=1}^{7}J_{i}.
\end{aligned}
\end{equation}
Define $\varphi_{\alpha}$ 
\begin{equation}\label{varphi}
\varphi_{\alpha}(t)\triangleq 1+A_{1}R_{T}^{\alpha\beta/2}.
\end{equation}
We now estimate each term $J_i$. First, by \eqref{eq:poincare-sobolev},
\begin{equation}\label{w4}
\|\omega\|_{L^{4}}\leq C\|\omega\|_{L^{2}}^{1/2}\|\nabla\omega\|_{L^{2}}^{1/2}\leq C\varphi_{\alpha}^{1/2}\|\omega\|_{H^{1}}^{1/2},
\end{equation}
with $\varphi_{\alpha}$ defined in \eqref{varphi}. Together with \eqref{a3u}  and H\"older's inequality this yields
\begin{equation}
|J_{1}|\leq C\|\omega\|_{L^{4}}^{2}\|\text{div}\mathbf{u}\|_{L^{2}}\leq CA_{3}\|\omega\|_{H^{1}}\varphi_{\alpha}.
\end{equation}
For $I_{2}$ we use an argument from \cite{Desjardins}. Since $\text{rot}\nabla u_{1}=0$ and $\text{div}\nabla^{\perp}u_{2}=0$, \cite{Coifman} gives
\begin{equation}
\|\nabla u_{1}\cdot\nabla^{\perp}u_{2}\|_{{\mathcal H}^{1}}\leq C\|\nabla u\|_{L^{2}}^{2}.\notag
\end{equation}
As $\mathcal{BMO}$ is the dual of ${\mathcal H}^{1}$ (see \cite{C. Fefferman}), we obtain
\begin{equation}
|I_{2}|\leq C\|G\|_{\mathcal{BMO}}\|\nabla u_{1}\cdot\nabla^{\perp}u_{2}\|_{{\mathcal H}^{1}}
\leq C\|\nabla G\|_{L^{2}}\|\nabla \mathbf{u}\|_{L^{2}}^{2}\leq CA_{3}\|G\|_{H^{1}}\varphi_{\alpha},
\end{equation}
where we also used \eqref{a3u} and the bound
\begin{equation}
\begin{aligned}
\|\nabla u\|_{L^{2}}\leq& C\|\omega\|_{L^{2}}+C\|{\rm div}\mathbf{u}\|_{L^{2}}\\
\leq& C\|\omega\|_{L^{2}}+C\left\| \frac{G}{2\mu+\lambda} \right\|_{L^2}+C\left\| \frac{P(Z)-\overline{P(Z)}}{2\mu+\lambda} \right\|_{L^2}\\
\leq& C\varphi_{\alpha}.
\end{aligned}\end{equation}
For $0<\alpha<1$, the terms $J_{3}$--$J_{6}$ can be bounded by H\"older's inequality as
\begin{equation}\begin{aligned}
\sum_{i=3}^{6}|J_{i}|
\leq& C\int\frac{G^{2}|{\rm div}\mathbf{u}|}{2\mu+\lambda}dx+C\int\frac{P+\overline{P}}{2\mu+\lambda}|G||{\rm div}\mathbf{u}|dx\\
\leq& C(\alpha)\left\| \frac{G^2}{2\mu+\lambda} \right\|_{L^2}\varphi_{\alpha}+CA_3\|G\|_{L^{2+\frac{4\gamma}{\beta}}}\left\|\frac{P+1}{2\mu+\lambda} \right\|_{L^{2+\frac{\beta}{\gamma}}},\\
\leq& C(\alpha)A_{3}\|G\|_{H^{1}}\varphi_{\alpha},
\end{aligned}\end{equation}
because, using \eqref{eq:poincare-sobolev} and $\|G\|_{L^{2}}\leq CR_{T}^{\beta/2}A_{1}$,
\begin{equation}\begin{aligned}\label{g4}
\left\|\frac{G^{2}}{\sqrt{2\mu+\lambda}}\right\|_{L^{2}}&\leq \|\frac{G}{\sqrt{2\mu+\lambda}}\|^{1-\alpha}_{L^2}\|G\|^{1+\alpha}_{L^{\frac{2(1+\alpha)}{\alpha}}}\\
&\leq C(\alpha)A_{1}^{1-\alpha}\|G\|_{L^{2}}^{\alpha}\|G\|_{H^{1}}\\
&\leq C(\alpha)A_{1}R_T^{\frac{\alpha\beta}{2}}\|G\|_{H^{1}}\\
&\leq C(\alpha)\|G\|_{H^{1}}\varphi_{\alpha}.
\end{aligned}\end{equation}
Finally, from \eqref{3.12}, \eqref{basicestimate}, and H\"older's inequality,
\begin{equation}
|J_{7}|\leq CA_{3}\|G\|_{L^{2}}+CA_{3}^{2}.
\end{equation}
Inserting these estimates into \eqref{second} gives, for any $\alpha\in(0,1)$,
\begin{equation}\label{step1}
\frac{{\rm d}}{{\rm d}t}A_{1}^{2}(t)+2A_{2}^{2}(t)\leq C(\alpha)A_{3}\left(\|G\|_{H^{1}}+\|\omega\|_{H^{1}}\right)\varphi_{\alpha}+CA_{3}^{2}.
\end{equation}
Notice that from \eqref{momequ},
\begin{equation}\label{gwexpression}
\Delta G={\rm div}(\rho\dot{\mathbf{u}}),\qquad \mu\Delta\omega=\nabla^{\perp}\cdot(\rho\dot{\mathbf{u}}),
\end{equation}
so standard $L^{p}$-elliptic estimates yield for $p\in(1,\infty)$,
\begin{equation}\label{gw}
\|\nabla G\|_{L^{p}}+\|\nabla\omega\|_{L^{p}}\leq C(p,\mu)\|\rho\dot{\mathbf{u}}\|_{L^{p}}.
\end{equation}
Combined with Poincar\'{e}--Sobolev inequality and the bound $|\overline{G}|\leq CA_{3}$, this implies
\begin{equation}\label{wg}
\|\omega\|_{H^{1}}+\|G\|_{H^{1}}\leq CR_{T}^{1/2}A_{2}+CA_{3}.
\end{equation}
Substituting this into \eqref{step1} and applying Cauchy's inequality we finally obtain
\begin{equation}\label{a1b}
\frac{d}{dt} A_1^2(t) + A_2^2(t) \leq 
C(\alpha) R_T \left( 1 + R_T^{\alpha \beta} A_1^2 \right) A_3^2
\leq C(\alpha)R_T^{1+\alpha \beta}A_1^2A_3^2+C(\alpha)R_TA_3^2,
\end{equation}
and dividing the above inequality by $e+A_1^2(t)$ and using \eqref{basicestimate}, we obtain that
\begin{equation}
\sup_{0 \leq t \leq T} \log\left(e + A_1^2(t) \right) + \int_0^T \frac{A_2^2(t)}{e + A_1^2(t)} \, dt \leq C(\alpha) R_T^{1  + \alpha \beta}.
\end{equation}
Combining this with the following inequality, we finally complete the proof of the lemma.
\begin{equation}\label{a1a3}
CA_3^2(t) - C  \leq A_1^2(t) \leq CA_3^2(t) + C.
\end{equation}
\end{proof}
In the subsequent analysis for deriving the upper bound of the density, the $L^\infty$ estimate of the commutator plays an essential role.  This estimate requires a careful and delicate treatment, as it involves the interplay between the singular integral operators, the velocity field, and the density.  The precise control of $\|F\|_{L^\infty}$ in terms of the key quantities $A_1$, $A_2$, $A_3$ and the maximal density $R_T$ is crucial for closing the energy estimates and eventually obtaining a time‑independent bound for the density under the condition $\beta > \frac{3}{2}.$
\begin{lema}\label{commutator estimate}
Let $F$ be defined by
\begin{equation}
\begin{aligned}\label{fdenie}
F\triangleq\sum_{i,j=1}^2\bigl[u_{i},\mathcal{R}_{ij}\bigr](\rho u_{j})
\end{aligned}
\end{equation}
Then for any $\varepsilon > 0$, there exists a constant $C(\varepsilon) > 0$ depending only on $c_{*}, c^{*},\varepsilon$, $\mu$, $\beta$, $\gamma$, $\|\rho_0\|_{L^\infty}$, and $\|u_0\|_{H^1}$ such that
\begin{equation}
\begin{aligned}
\|F\|_{L^\infty} \leq \frac{C(\varepsilon) R_T^{-1} A_2^2}{e+A_1^2}
+ C(\varepsilon) A_3^2 R_T^{3/2+\varepsilon}
+ C(\varepsilon) R_T^{1+\varepsilon}.
\end{aligned}
\end{equation}
\end{lema}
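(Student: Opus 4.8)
I would obtain the bound from the Brezis--Wainger logarithmic inequality \eqref{eq:brezis-wainger} applied to $v=F$, which reduces everything to estimating $\|F\|_{L^2}$, $\|\nabla F\|_{L^2}$ and (only inside a logarithm, hence up to a log-loss) $\|\nabla F\|_{L^q}$ for one fixed $q>2$. The zeroth-order term is disposed of at once: by the $L^p$-commutator bound \eqref{eq:commutator-Lp}, the two-dimensional embedding $W^{1,2}\hookrightarrow\mathcal{BMO}$, Lemma~\ref{lemma:velocity_T2}, and the energy estimate \eqref{basicestimate},
\[
\|F\|_{L^2}\le C\|u\|_{\mathcal{BMO}}\|\rho u\|_{L^2}\le C\|\nabla u\|_{L^2}\,\|\sqrt\rho\|_{L^\infty}\|\sqrt\rho u\|_{L^2}\le CA_3R_T^{1/2},
\]
which is already $\le\frac12A_3^2+CR_T\le CA_3^2R_T^{3/2+\varepsilon}+CR_T^{1+\varepsilon}$ since $R_T\ge1$.

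\textbf{The first-order pieces.} The essential tool is the Coifman--Meyer commutator gradient estimate \eqref{eq:commutator-gradient}, which is crucial precisely because it places the derivative on $u$ (controlled) rather than on $\rho$ (whose gradient is not available here): for $1/r=1/r_2+1/r_3$ one has $\|\nabla F\|_{L^r}\le C\|\nabla u\|_{L^{r_2}}\|\rho u\|_{L^{r_3}}$. Both factors have a clean $L^2$-version carrying no harmful growth, namely $\|\nabla u\|_{L^2}\le CA_3$ and, by \eqref{basicestimate}, $\|\rho u\|_{L^2}\le\|\sqrt\rho\|_{L^\infty}\|\sqrt\rho u\|_{L^2}\le CR_T^{1/2}$; while for a \emph{high} exponent $m$, writing $\nabla u$ through $\omega$ and $\operatorname{div}u=(G+P-\overline P)/(2\mu+\lambda)$ (see \eqref{3.12}) and using the elliptic identities \eqref{gwexpression}--\eqref{gw} (so $\|\nabla G\|_{L^2}+\|\nabla\omega\|_{L^2}\le C\|\rho\dot u\|_{L^2}\le CR_T^{1/2}A_2$), the bound $\|G\|_{L^2}^2\le(2\mu+R_T^\beta)\int\frac{G^2}{2\mu+\lambda}\le CR_T^\beta A_1^2$, and $\|P\|_{L^p}\le C$ (uniform, since $P=Z^\gamma$ with $Z\le c^*\rho$ by \eqref{equzrho} and $\|\rho\|_{L^p}\le C(p)$ is uniform by Lemma~\ref{cor:rho-Lp-estimate}), one gets $\|\nabla u\|_{L^m}\le CR_T^{\kappa}(1+A_1+A_2+A_3)$ and $\|\rho u\|_{L^m}\le C(m)A_3$, for a fixed $\kappa=\kappa(\beta)$ and with $A_1,A_2,A_3$ occurring to power at most $1$. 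The key device is then to take $r_2,r_3$ slightly above $2$ and interpolate \emph{both} factors between their $L^2$-norms and these high-exponent norms, with interpolation weights as small as desired: this confines the $R_T^{\beta/2}$-loss, the $R_T^{1/2}A_2$-factor, and the full power of $A_3$ to that small exponent. The upshot is $\|\nabla F\|_{L^2}\le CR_T^{\varepsilon}\Phi$, where every summand of $\Phi$ has total $(A_2,A_3)$-degree strictly below $2$ and $A_2$-degree as small as one wishes. For the logarithm I would bound $\|\nabla F\|_{L^q}$ ($q>2$ fixed) crudely by a polynomial in $A_1,A_2,A_3,R_T$ with bounded exponents, and then invoke Lemma~\ref{basiclogestiamte}: since \eqref{logestimate} gives $\log(e+A_1^2+A_3^2)\le CR_T^{1+\alpha\beta}$ and $\log(e+R_T)\le R_T$, this yields $\ln^{1/2}(e+\|\nabla F\|_{L^q})\le CR_T^{(1+\alpha\beta)/2}+C\ln^{1/2}(e+A_2)$.

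\textbf{Assembly.} Plugging into \eqref{eq:brezis-wainger}, $\|F\|_{L^\infty}\le C\|\nabla F\|_{L^2}\ln^{1/2}(e+\|\nabla F\|_{L^q})+C\|F\|_{L^2}+C$, so it remains to run Young's inequality: choosing first $\alpha$ and then the interpolation weights small, all the $R_T$-powers combine to at most $R_T^{1+\varepsilon}\le R_T^{3/2+\varepsilon}$ (the stated power $3/2$ is thus a comfortable over-estimate); the summands of $\Phi$ not involving $A_2$ have $A_3$-degree below $2$, hence are $\le A_3^2+C$ and land in $CA_3^2R_T^{3/2+\varepsilon}+CR_T^{1+\varepsilon}$; and the $A_2$-carrying summands, together with the extra $A_2^{\tau}$ produced by $\ln^{1/2}(e+A_2)\le C_\tau(1+A_2^{\tau})$, are absorbed by Young's inequality into the dissipation term $\varepsilon R_T^{-1}A_2^2/(e+A_1^2)$, using the equivalence $e+A_1^2\sim e+A_3^2$ from \eqref{a1a3}.

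\textbf{Main obstacle.} The delicate point is precisely this last absorption and the accompanying degree-accounting for $A_3$. Inequality \eqref{logestimate} only gives $\sup_tA_3^2(t)\le\exp(CR_T^{1+\alpha\beta})$, so the whole scheme collapses if $A_3$ ever appears to a power strictly larger than $2$ in the final bound; hence $\|\nabla F\|_{L^2}$ must be arranged, via the double interpolation above, so that whenever a positive power of $A_2$ (or a $\ln^{1/2}(e+A_2)$-factor, bounded by a small power of $A_2$) multiplies $A_3$, the accompanying $A_3$-power stays \emph{strictly} below $2$ by a margin larger than the loss incurred when Young's inequality replaces $A_2^{\,s}$ by $A_2^2$ at the cost of a factor $(e+A_1^2)^{s/(2-s)}\sim A_3^{\,2s/(2-s)}$; verifying that this margin is respected throughout, and that no stray standalone $A_3^2\ln^{1/2}(e+A_2)$ term survives, is the crux of the proof.
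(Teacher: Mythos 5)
Your high-level strategy diverges from the paper's in a way that creates a genuine gap. The paper does \emph{not} apply Brezis--Wainger to $F$; it uses a Gagliardo--Nirenberg interpolation of the form
\begin{equation}
\|F\|_{L^\infty}\le C(q)\|F\|_{L^q}+C(q)\|F\|_{L^q}^{1-4/q}\|\nabla F\|_{L^{4q/(q+4)}}^{4/q},
\end{equation}
whose whole point is that $\nabla F$ enters at the \emph{small} power $4/q$. This is what keeps the large $R_T^{\hat C}$-factor hiding inside $\|\nabla u\|_{L^4}$ (see \eqref{u4}) harmless, and what lets the $A_2^2/(e+A_1^2)$-unit pair off against $A_3^{2-2/q}$ by Young's inequality with conjugate exponents $q$ and $q/(q-1)$, producing exactly $A_3^2$ on the residue side with no overshoot. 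Your Brezis--Wainger route places $\|\nabla F\|_{L^2}$ at \emph{full} power, and here the degree-accounting you sketch cannot close. Concretely: the Coifman--Meyer estimate for $\|\nabla F\|_{L^2}$ requires $\tfrac12=\tfrac1{r_2}+\tfrac1{r_3}$, so you cannot take ``$r_2,r_3$ slightly above $2$'' — if $r_2=2+\epsilon$ then $r_3\approx 4/\epsilon$, and the second factor cannot be interpolated with a small weight. The claim that ``every summand of $\Phi$ has total $(A_2,A_3)$-degree strictly below $2$'' is therefore not achievable by the double interpolation you describe.

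More precisely, whichever split you choose for $(r_2,r_3)$ you hit one of two walls. If $r_2\to 2$ and $r_3\to\infty$, then $\|\rho u\|_{L^{r_3}}\lesssim R_T A_3$ (or worse) contributes a full power of $A_3$, and $\|\nabla u\|_{L^{r_2}}\lesssim A_3^{1-\theta}(R_T^\kappa(1+A_1+A_2+A_3))^\theta$; the product has $(A_2,A_3)$-degree exactly $2$, with $A_2^\theta$ attached. Absorbing $A_2^\theta$ into $\varepsilon R_T^{-1}A_2^2/(e+A_1^2)$ via Young forces the compensating factor $(e+A_1^2)^{\theta/(2-\theta)}\sim A_3^{2\theta/(2-\theta)}$ onto the residue, giving an $A_3$-degree strictly above $2$ — exactly the collapse you flag as the main obstacle, but your scheme does not resolve it, it produces it. If instead $r_2=r_3=4$ so that both factors are estimated by \eqref{u4} and \eqref{rhouq}, the $A_3$-degree is fine ($\le 3/2$), but the $R_T^{\hat C}$ in \eqref{u4} now appears at full power, yielding an overall $R_T$-power $\ge\hat C+3/2>5/2$, which overshoots the allowed $R_T^{3/2+\varepsilon}$. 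The missing idea in your proposal is precisely the Gagliardo--Nirenberg structure with $\|\nabla F\|$ raised to the small exponent $4/q$, which simultaneously cures both overshoots; Brezis--Wainger cannot substitute for it here.
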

\begin{proof}
We start from the $L^q$-estimate of $F$. By Lemma \ref{lem:commutator} and \eqref{a3u}, we have 
\begin{equation}
\begin{aligned}\nonumber
\|F\|_{L^q} \le C(q) \|\nabla \mathbf{u}\|_{L^2} \|\rho \mathbf{u}\|_{L^q}
\le C(q) R_T (A_3^2+1).
\end{aligned}
\end{equation}
Using the Gagliardo--Nirenberg inequality and Lemma \ref{lem:commutator} again, we obtain for $q\in(8,\infty)$,
\begin{equation}
\begin{aligned}\label{ffina}
\|F\|_{L^\infty}
&\le C(q)\|F\|_{L^q} + C(q)\|F\|_{L^q}^{1-4/q}\|\nabla F\|_{L^{4q/(q+4)}}^{4/q} \\
&\le C(q)R_T(A_3^2+1) + C(q)\|\nabla \mathbf{u}\|_{L^2}^{1-4/q}\|\rho \mathbf{u}\|_{L^q}^{1-4/q}\|\nabla \mathbf{u}\|_{L^4}^{4/q}\|\rho \mathbf{u}\|_{L^q}^{4/q} \\
&\le C(q)R_T(A_3^2+1) + C(q)A_3^{1-4/q}\|\nabla \mathbf{u}\|_{L^4}^{4/q}\|\rho \mathbf{u}\|_{L^q}.
\end{aligned}
\end{equation}
Next we estimate $\|\nabla \mathbf{u}\|_{L^4}$ and $\|\rho \mathbf{u}\|_{L^q}$. 

From \eqref{w4}, \eqref{g4}, \eqref{a1a3} we have
\begin{equation}
\begin{aligned}\label{u4}
\|\nabla \mathbf{u}\|_{L^{4}} &\leq C\left(\|{\rm div}\mathbf{u}\|_{L^{4}}+\|{\omega}\|_{L^{4}}\right) \\
&\leq C\left\| \frac{G}{2\mu+\lambda} \right\|_{L^{4}} 
+ C\left\| \frac{P-\bar{P}}{2\mu+\lambda} \right\|_{L^{4}} 
+ C\|{\omega}\|_{L^{4}} \\
&\leq C\left\| \frac{G^{2}}{2\mu+\lambda} \right\|_{L^{2}}^{1/2} 
+ C\varphi_{\alpha}^{1/2}\|{\omega}\|_{H^{1}}^{1/2} +C\\
&\leq C(\alpha)\left(\|G\|_{H^{1}}+\|{\omega}\|_{H^{1}}\right)^{1/2}\varphi_{\alpha}^{1/2}+C \\
&\leq C R_{T}^{\hat{C}}(e+A_{3})\left( \frac{R_{T}^{-4}A_{2}^{2}}{e+A_{1}^{2}} \right)^{1/4} 
+ C R_{T}^{\hat{C}}(e+A_{3}).
\end{aligned}
\end{equation}
for some $\hat{C}>1$ depending only on $\beta,\gamma$.

\noindent Using \eqref{eq:brezis-wainger} and \eqref{a3u}, we get for $\alpha\in(0,1)$,
\begin{equation}
\begin{aligned}\label{infty}
\|\mathbf{u}\|_{L^\infty}
\le& C\|\nabla \mathbf{u}\|_{L^2}\log^{1/2}\bigl(e+\|\nabla \mathbf{u}\|_{L^4}\bigr)
+ C\|\nabla \mathbf{u}\|_{L^2}+C\\
\le& C(\alpha)A_3R_T^{(1+\alpha\beta)/2}
+ C\Bigl( \frac{R_T^{-4}A_2^2}{(e+A_1^2)(e+A_3)^4} \Bigr)^{1/2}+C.
\end{aligned}
\end{equation}
Hence, for $q>8$ and $\alpha\in(0,1)$,
\begin{equation}
\begin{aligned}\label{rhouq}
\|\rho \mathbf{u}\|_{L^q}
&\le C R_T^{1-1/q}\|\rho^{1/2}\mathbf{u}\|_{L^2}^{2/q}\|\mathbf{u}\|_{L^\infty}^{1-2/q} \\
&\le C(\alpha) A_3^{1-2/q}R_T^{(3+\alpha\beta)/2}
+ C\Bigl( \frac{R_T^{-1}A_2^2}{(e+A_1^2)(e+A_3)^4} \Bigr)^{1/2-1/q}
+ C R_T.
\end{aligned}
\end{equation}
We proceed to estimate the following term. To this end, we invoke \eqref{rhouq}
\begin{equation}
\begin{aligned}
A_3^{1-4/q}\|\nabla \mathbf{u}\|_{L^4}^{4/q}\|\rho \mathbf{u}\|_{L^q}\leq& C(q,\alpha)\|\nabla \mathbf{u}\|_{L^{4}}^{4/q}A_{3}^{(2q-6)/q}R_{T}^{(3+\alpha\beta)/2} \\
&\quad +C(q,\alpha)A_{3}^{(q-4)/q}\|\nabla \mathbf{u}\|_{L^{4}}^{4/q}\left(\frac{R_{T}^{-1}A_{3}^{2}}{(e+A_{1}^{2})(e+A_{3})^{4}}\right)^{1/2-1/q} \\
&\quad +C(q)R_{T}A_{3}^{1-4/q}\|\nabla \mathbf{u}\|_{L^{4}}^{4/q} \\
 \triangleq& \sum_{i=1}^{3}M_{i}.
 \end{aligned}
\end{equation}
An application of the H\"{o}lder inequality gives
\begin{equation}
\begin{aligned}\label{m1}
|M_1| &\leq C(q,\alpha)\left(R_{T}^{3+\alpha\beta)/2+4\tilde{C}/q}A_{3}^{2-6/q}\right)^{q/(q-3)} \\
&\quad +C(q,\alpha)(e+A_{3}^{2})+\frac{R_{T}^{-1}A_{3}^{2}}{e+A_{1}^{2}} \\
&\leq C(q,\alpha)+C(q,\alpha)A_{3}^{2}R_{T}^{\tilde{\kappa}(\alpha,q)}+C(q,\alpha)\frac{R_{T}^{-1}A_{3}^{2}}{e+A_{1}^{2}},
\end{aligned}
\end{equation}
with
$$\tilde{\kappa}(\alpha,q)\triangleq\Big(\frac{3+\alpha\beta}{2}+\frac{4\tilde{C}}{q}\Big)\frac{1}{1-\frac{3}{q}}$$
By applying \eqref{u4}, we have
\begin{equation}
\begin{aligned}\label{m2m3}
|M_2|+|M_3|\leq& C(q,\alpha)\|\nabla \mathbf{u}\|^{\frac{4}{q}}_{L^4}\left(\frac{R_{T}^{-1}A_{3}^{2}}{e+A_{1}^{2}}\right)^{1/2-1/q}+C(q,\alpha)R_{T}(A_{3}+1)\|\nabla \mathbf{u}\|_{L^{4}}^{4/q}\\
\leq& C(q,\alpha)R_{T}\|\nabla \mathbf{u}\|_{L^{4}}^{8/q}+C(q,\alpha)\frac{R_{T}^{-1-\kappa}A_{3}^{2}}{e+A_{1}^{2}}+C(q,\alpha)R_{T}+C(q,\alpha)R_{T}A_{3}^{2} \\
\leq& C(q,\alpha)R_{T}^{1+16\tilde{C}/q}+C(q,\alpha)R_{T}^{1+16\tilde{C}/q}A_{3}^{2}+C(q,\alpha)\frac{R_{T}^{-1-\kappa}A_{3}^{2}}{e+A_{1}^{2}},
\end{aligned}
\end{equation}
Therefore, combining equations \eqref{ffina}, \eqref{m1}, and \eqref{m2m3}, and choosing $q$ sufficiently large and $\alpha$ sufficiently small, we complete the proof of the Lemma \ref{commutator estimate}.
\end{proof}
\begin{lema}\label{rhodependent}
Assume that \eqref{beta23}, there exists a positive constant $C$ depending only on $c_{*}, c^{*},\mu, \beta, \gamma, \|\rho_0\|_{L^\infty}$, and $\|\mathbf{u}_0\|_{H^1}$ such that
\begin{equation}
\begin{aligned}\label{independestimate}
\sup_{0 \leq t \leq T}\left(\|\rho\|_{L^\infty} + \|\mathbf{u}\|_{H^1}\right) 
+ \int_0^T \left( \|\omega\|_{H^1}^2 + \|G\|_{H^1}^2 + A_2^2(t) \right) dt \leq C.
\end{aligned}
\end{equation}
\end{lema}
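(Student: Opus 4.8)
The plan is to transport $\theta(\rho)+\psi$ along particle trajectories, to cast the transport identity \eqref{changeequ} into the form required by Zlotnik's inequality (Lemma~\ref{lem:Zlotnik}), and then to feed the resulting uniform‑in‑$T$ bound for $R_T$ back into Lemmas~\ref{lem:energy} and~\ref{basiclogestiamte}; here $\theta(\rho)=2\mu\log\rho+\tfrac1\beta\rho^\beta$ and $\psi=\Delta^{-1}\operatorname{div}(\rho\mathbf{u})$ are exactly as in \eqref{changeequ} and in the proof of Lemma~\ref{lem:Phi-bound}. Fix $x_0\in\mathbb{T}^2$, let $X(\cdot)$ solve $\dot X=\mathbf{u}(X,t)$ with $X(0)=x_0$, and set $z(t)\triangleq(\theta(\rho)+\psi)(X(t),t)$. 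Evaluating \eqref{changeequ} along this curve gives
\begin{equation}\label{plan:zode}
z'(t)=-P(X(t),t)+F(X(t),t)+\overline{P}(t)-\overline{G}(t),\qquad z(0)=(\theta(\rho_0)+\psi_0)(x_0),
\end{equation}
where $F$ is the commutator of Lemma~\ref{commutator estimate}; the pressure term $-P$ will be the coercive function and everything else will be absorbed into the Zlotnik drift.

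The first step is to verify the four structural facts that turn \eqref{plan:zode} into a Zlotnik problem. \emph{(i) Coercivity.} Since $\theta$ is increasing and $\theta(\rho)\sim\rho^\beta$ as $\rho\to\infty$, one has $\rho^\gamma\ge c_1(\theta(\rho))_+^{\gamma/\beta}-C$; combining this with $P\ge c_*^\gamma\rho^\gamma$ (from \eqref{equzrho}), with $\theta(\rho)=z-\psi$, and with the elementary inequality $(a-b)_+^{\gamma/\beta}\ge\tfrac12 a_+^{\gamma/\beta}-C|b|^{\gamma/\beta}$, yields, at the point $(X(t),t)$,
\begin{equation*}
-P\le-c_2\,z_+^{\gamma/\beta}+C\,|\psi|^{\gamma/\beta}+C,\qquad c_2>0 .
\end{equation*}
\emph{(ii) $\overline G$ is an exact time derivative.} From $\eqref{trans1}_1$, $\rho^\beta$ satisfies $\partial_t\rho^\beta+\operatorname{div}(\rho^\beta\mathbf{u})+(\beta-1)\rho^\beta\operatorname{div}\mathbf{u}=0$; integrating over $\mathbb{T}^2$ and using $\int_{\mathbb{T}^2}(P-\overline P)\,dx=0$ gives $\overline G=-\tfrac{1}{(\beta-1)|\mathbb{T}^2|}\tfrac{d}{dt}\!\int_{\mathbb{T}^2}\rho^\beta\,dx$, so $\bigl|\int_{t_1}^{t_2}\overline G\,ds\bigr|\le C$ for all $t_1<t_2$ by Lemma~\ref{cor:rho-Lp-estimate}. \emph{(iii) Integrable sources.} By Lemma~\ref{lem:energy}, $\overline P\le C$; integrating Lemma~\ref{commutator estimate} in time and using $\int_0^T A_3^2\,ds\le C$ (Lemma~\ref{lem:energy}) together with $\int_0^T\tfrac{A_2^2}{e+A_1^2}\,ds\le C R_T^{1+\alpha_0\beta}$ for a small $\alpha_0\in(0,1)$ (Lemma~\ref{basiclogestiamte}) gives
\begin{equation*}
\int_{t_1}^{t_2}\|F(\cdot,s)\|_{L^\infty}\,ds\le C\,R_T^{\alpha_0\beta}+C\,R_T^{3/2+\varepsilon}+C\,R_T^{1+\varepsilon}(t_2-t_1);
\end{equation*}
and, by the $L^\infty$ bounds for $\psi$ derived in the proof of Lemma~\ref{lem:Phi-bound} (namely \eqref{psi1} and \eqref{3.28}) together with $\int_0^T A_3^2\,ds\le C$, $\int_{t_1}^{t_2}\|\psi(s)\|_{L^\infty}^{\gamma/\beta}\,ds\le C+C(t_2-t_1)$. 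Putting these together, \eqref{plan:zode} reads $z'\le g(z)+\tilde h'$ with $g(z)=-c_2 z_+^{\gamma/\beta}\in C(\mathbb{R})$, $g(+\infty)=-\infty$, and $\tilde h(t_2)-\tilde h(t_1)\le N_0+N_1(t_2-t_1)$, where $N_0=C\bigl(1+R_T^{\alpha_0\beta}+R_T^{3/2+\varepsilon}\bigr)$ and $N_1=C\bigl(1+R_T^{1+\varepsilon}\bigr)$.

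Next I would apply Lemma~\ref{lem:Zlotnik}: along every trajectory, $z(t)\le\max\{z(0),\bar\zeta\}+N_0=:K_1$, where $g(\bar\zeta)\le-N_1$ forces $\bar\zeta\le C N_1^{\beta/\gamma}\le C R_T^{(1+\varepsilon)\beta/\gamma}+C$, while $z(0)\le C$ depends only on $\|\rho_0\|_{L^\infty}$ and $\|\mathbf{u}_0\|_{H^1}$ (via $\|\psi_0\|_{L^\infty}\le C\|\rho_0\mathbf{u}_0\|_{L^{2+\varepsilon}}$ and the embeddings $W^{1,2+\varepsilon}\hookrightarrow L^\infty$, $H^1\hookrightarrow L^m$ in two dimensions). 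Since $x_0$ is arbitrary, $\theta(\rho)+\psi\le K_1$ on $\mathbb{T}^2\times(0,T]$, hence $f=(\theta(\rho)+\psi-M)_+\le K_1$ pointwise; since $\int_{\mathbb{T}^2}\rho\,dx=1$, inserting $\int_{\mathbb{T}^2}\rho f^\alpha\,dx\le K_1^\alpha$ into \eqref{rhof} gives $\int_{\mathbb{T}^2}\rho^{\alpha\beta+1}\,dx\le C(\alpha)K_1^\alpha$ for every $\alpha\ge3$ (assuming $K_1\ge M$), and letting $\alpha\to\infty$ in $\|\rho\|_{L^{\alpha\beta+1}}$ yields $R_T\le C\bigl(K_1^{1/\beta}+1\bigr)$. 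Therefore
\begin{equation}\label{plan:feed}
R_T^\beta\le C\Bigl(R_T^{(1+\varepsilon)\beta/\gamma}+R_T^{3/2+\varepsilon}+R_T^{\alpha_0\beta}+1\Bigr).
\end{equation}
Because $\gamma>1$ and $\beta>\tfrac32$, one can fix $\varepsilon>0$ and $\alpha_0\in(0,1)$ so small that every exponent on the right of \eqref{plan:feed} is strictly smaller than $\beta$; then \eqref{plan:feed} forces $R_T\le C$ with $C$ independent of $T$ (equivalently, one runs the preceding estimates on the maximal interval on which $R_T$ does not exceed a large fixed constant, and reaches a contradiction with \eqref{plan:feed}).

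Finally, once $R_T\le C$ is available, Lemma~\ref{basiclogestiamte} gives $\sup_{0\le t\le T}(A_1^2+A_3^2)\le C$ and, $A_1^2$ being now uniformly bounded, $\int_0^T A_2^2\,dt\le C$; then $\|\nabla\mathbf{u}\|_{L^2}^2\le\mu^{-1}A_3^2\le C$, which together with $\int_{\mathbb{T}^2}\rho|\mathbf{u}|^2\,dx\le C$ and Lemma~\ref{lem:poincare-type} gives $\|\mathbf{u}\|_{H^1}\le C$, and \eqref{wg} gives $\int_0^T\bigl(\|\omega\|_{H^1}^2+\|G\|_{H^1}^2\bigr)\,dt\le C$, which is precisely \eqref{independestimate}. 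I expect the main obstacle to be the bookkeeping in the preceding paragraph: one has to be certain that the $L^\infty$ commutator bound of Lemma~\ref{commutator estimate} enters with no worse than the power $R_T^{3/2+\varepsilon}$, that the $\psi$-term is absorbed through the time-integrated bound on $\|\psi\|_{L^\infty}^{\gamma/\beta}$ rather than a pointwise one, and that the $L^p\to L^\infty$ passage built on $f\le K_1$ leaves exactly the exponents in \eqref{plan:feed}; it is precisely the requirement $3/2+\varepsilon<\beta$ that makes this time-independent estimate need $\beta>3/2$ rather than the weaker $\beta>4/3$ of the time-dependent theory.
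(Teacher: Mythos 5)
Your proposal is correct in broad outline and follows the same Zlotnik-based strategy as the paper, but two design choices differ from the paper's Lemma~\ref{rhodependent}, and one of them leaves a real gap.

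\emph{Where you differ.} The paper applies Lemma~\ref{lem:Zlotnik} to $y=\theta(\rho)$ with the \emph{exact} drift $g(y)=-(\theta^{-1}(y))^{\gamma}\theta_0^{\gamma}$ and puts $-\psi$ into $h(t)$, which is controlled pointwise via the Brezis--Wainger bound \eqref{psi}, $\|\psi\|_{L^\infty}\le CR_T^{(2+\alpha\beta)/2}$. You instead apply Zlotnik to $z=\theta(\rho)+\psi$, using a coercivity inequality $-P\le -c_2 z_+^{\gamma/\beta}+C|\psi|^{\gamma/\beta}+C$ to manufacture a drift of the form $g(z)=-c_2z_+^{\gamma/\beta}$, and you absorb $\psi$ only through the time-integrated bound $\int\|\psi\|_{L^\infty}^{\gamma/\beta}\,ds\lesssim 1+(t_2-t_1)$. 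Both produce $N_0\sim R_T^{3/2+\varepsilon}$, $N_1\sim R_T^{1+\varepsilon}$, and the same closing inequality $R_T^{\beta}\lesssim R_T^{\max\{3/2+\varepsilon,\,(1+\varepsilon)\beta/\gamma\}}$, so as far as the Zlotnik step goes your route is a legitimate variant; your observation that $\overline G$ is an exact time derivative of $-\tfrac{1}{(\beta-1)|\mathbb{T}^2|}\int\rho^\beta$ is also correct and is a cleaner way to handle $\overline{P}-\overline{G}$ than the paper's Cauchy--Schwarz estimate. Note also that you are invoking Lemma~\ref{lem:Zlotnik} with an inequality $z'\le g(z)+\tilde h'$ rather than the stated equality; this is fine here because $g$ is continuous and nonincreasing, so a standard comparison argument applies, but it should be said.

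\emph{Where the gap is.} Having concluded $\theta(\rho)+\psi\le K_1$ pointwise, you pass to $R_T$ by inserting $\int_{\mathbb{T}^2}\rho f^{\alpha}\,dx\le K_1^{\alpha}$ into \eqref{rhof} and letting $\alpha\to\infty$ in $\|\rho\|_{L^{\alpha\beta+1}}$. This step is not justified as written: the constant $C=C(\alpha)$ in \eqref{rhof}, and more importantly the truncation threshold $M=M(\alpha,\varepsilon)$ in the definition of $f$, are produced inside the proof of Lemma~\ref{lem:Phi-bound} by Young absorptions and Sobolev embeddings whose constants have not been tracked in $\alpha$. If $M(\alpha)$ grows with $\alpha$, then $M^{\alpha}$ can dominate and the limit $C(\alpha)^{1/(\alpha\beta+1)}$ need not be finite. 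Moreover, even on a good day this detour is not needed: once you have $\theta(\rho)+\psi\le K_1$, the same pointwise $\psi$-estimate that the paper uses, $\|\psi\|_{L^\infty}\le CR_T^{(2+\alpha_0\beta)/2}\le CR_T^{1+\varepsilon}$ from \eqref{psi}, immediately gives $\theta(\rho)\le K_1+CR_T^{1+\varepsilon}$ pointwise, and since $\theta(\rho)\gtrsim\rho^{\beta}$ for $\rho$ large this yields $R_T^{\beta}\le CK_1+CR_T^{1+\varepsilon}$ without any limit in $\alpha$. So you should either replace the $\alpha\to\infty$ argument by this one-line pointwise deduction (which reconstitutes the paper's argument, just with your modified Zlotnik variable), or, if you insist on the $\alpha\to\infty$ route, supply a uniform-in-$\alpha$ bound for the constant and the threshold $M$ in \eqref{rhof}. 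With that fix the rest of your write-up --- the exponent bookkeeping, the conclusion $R_T\le C$, and the passage to $\|\mathbf{u}\|_{H^1}$, $\int A_2^2$, and $\int(\|\omega\|_{H^1}^2+\|G\|_{H^1}^2)$ via \eqref{a1b}, \eqref{basicestimate}, \eqref{wg} --- matches the paper.
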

\begin{proof}
From equation \eqref{changeequ}, we can define
\begin{equation}
\begin{aligned}
y = \theta(\rho), \qquad g(y) = -(\theta^{-1}(y))^{\gamma}\theta_0^{\gamma}, \qquad 
h(t) = -\psi(t) + \int_0^t (\overline{P} -\overline{G} -F) \, ds.
\end{aligned}
\end{equation}
Then \eqref{changeequ} can be rewritten as the ordinary differential equation along the particle trajectory:
\begin{equation}
\begin{aligned}
\frac{D}{Dt} y = g(y) + \frac{D}{Dt} h.
\end{aligned}
\end{equation}
First, from \eqref{a3u}, \eqref{logestimate} and Lemma \ref{lem:poincare-sobolev} we have
\begin{equation}
\begin{aligned}\label{psi}
\|\psi\|_{L^{\infty}} 
    \leq& C\|\nabla\psi\|_{L^{2}}\log^{1/2}(e+\|\nabla\psi\|_{L^{3}})+C\|\psi\|_{L^{2}}+C \\
    \leq& C\|\rho \mathbf{u}\|_{L^{2}}\log^{1/2}(e+\|\rho \mathbf{u}\|_{L^{3}})+C\|\rho \mathbf{u}\|_{L^{2\gamma/(\gamma+2)}}+C \\
    \leq& CR_{T}^{1/2}\log^{1/2}(e+R_{T}(1+\|\nabla \mathbf{u}\|_{L^{2}}))+C \\
    \leq& CR_{T}^{1/2}\log^{1/2}(e+A_{3}^{2})+CR_{T}\\
    \leq&CR_{T}^{\frac{2+\alpha\beta}{2}}
\end{aligned}
\end{equation}
Second, using \eqref{basicestimate} gives
\begin{equation}
\begin{aligned}\label{pg}
|\overline{P} - \overline{G}| \leq C + C A_3^2.
\end{aligned}
\end{equation}
Third, Lemma \ref{commutator estimate} together with \eqref{logestimate} and \eqref{basicestimate}   implies that for any \(\varepsilon>0\),
\begin{equation}
\begin{aligned}\nonumber
\int_{t_1}^{t_2} \|F\|_{L^\infty} \, ds 
\leq C(\varepsilon) R_T^{3/2 + \varepsilon} 
+ C(\varepsilon) R_T^{1+\varepsilon} (t_2 - t_1).
\end{aligned}
\end{equation}
Combining \eqref{pg}, \eqref{psi} and the above estimate, we obtain for any $0 \leq t_1 < t_2 \leq T$
\begin{equation}
\begin{aligned}\nonumber
|h(t_2) - h(t_1)| 
&\leq \|\psi(t_2) - \psi(t_1)\|_{L^\infty} 
+ \int_{t_1}^{t_2} |\overline{P} - \overline{G}| \, ds 
+ \int_{t_1}^{t_2} \|F\|_{L^\infty} \, ds \\
&\leq C(\varepsilon) R_T^{3/2}
+ C(\varepsilon) R_T^{1+\varepsilon} (t_2 - t_1).
\end{aligned}
\end{equation}
Hence, in the notation of Lemma \ref{lem:Zlotnik} we may take
\begin{equation}
\begin{aligned}\nonumber
N_0 = C(\varepsilon) R_T^{\frac{3}{2}}, \qquad
N_1 = C(\varepsilon) R_T^{1+\varepsilon}.
\end{aligned}
\end{equation}
For the function , we have
\begin{equation}
\begin{aligned}\nonumber
g(\zeta) \leq - N_1 \quad \text{for all} \quad 
\zeta \geq \bar{\zeta} := C(\varepsilon,c_{*}) R_T^{\beta(1+\varepsilon)/\gamma}.
\end{aligned}
\end{equation}
Applying Lemma \ref{lem:Zlotnik} we deduce
\begin{equation}
\begin{aligned}\label{3.55}
R_T^\beta\leq C(\varepsilon,c_{*}) R_T^{\max\{3/2, \;  (1+\varepsilon)\beta/\gamma\}}.
\end{aligned}
\end{equation}
 Under the assumption \eqref{beta23}, a direct computation shows that for sufficiently small $\varepsilon > 0$ the exponent on the right-hand side of \eqref{3.55} is strictly smaller than $\beta$. Consequently, we obtain the uniform bound
\begin{equation}
\begin{aligned}\label{3.56}
\sup_{0 \leq t \leq T} \|\rho(\cdot, t)\|_{L^\infty} \leq C.
\end{aligned}
\end{equation}
With \eqref{3.56} in hand, we can improve the previous estimates. Combining equations \eqref{3.56}, \eqref{a1b}, we have
\begin{equation}
\begin{aligned}\label{orderestimate}
\sup_{0 \leq t \leq T} A_1^2(t) + \int_0^T A_2^2(t) \, dt \leq C.
\end{aligned}
\end{equation}
From \eqref{wg} and \eqref{orderestimate} we also obtain
\begin{equation}
\begin{aligned}
\int_0^T \left( \|\omega\|_{H^1}^2 + \|G\|_{H^1}^2 \right) dt \leq C.
\end{aligned}
\end{equation}
Finally, the bound on $\|\mathbf{u}\|_{H^1}$ follows from \eqref{a1b} and \eqref{basicestimate}.  Collecting \eqref{wg}, \eqref{independestimate}  we arrive at \eqref{independestimate}, which completes the proof of Lemma \ref{rhodependent}.
\end{proof}
\subsubsection{Time-independent upper bound of the density}
\begin{lema}
 Let $\nu\triangleq R_T^{-\frac{\beta}{2}}\nu_0$ with $\nu_0\in(0,1)$ sufficiently small depending only on $\mu$ and $\mathbb{T}^2$. Then there exists a constant $C$ depending on $\mathbb{T}^2$, $T,\,c_{*}, c^{*},\mu$, $\beta$, $\gamma$, $\|\rho_0\|_{L^\infty}$, $\|Z_0\|_{L^\infty}$ and $\|u_0\|_{H^1}$ such that
\begin{equation}
\begin{aligned}\label{rhou2}
\sup_{0\leq t\leq T}\int_{\mathbb{T}^2}\rho|\mathbf{u}|^{2+\nu}dx\leq C(T).
\end{aligned}
\end{equation}
\end{lema}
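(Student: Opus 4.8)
The plan is to run a Hoff-type weighted energy estimate for the moment $\int_{\mathbb{T}^2}\rho|\mathbf{u}|^{2+\nu}\,dx$, the only genuinely new ingredient being the treatment of the density-dependent bulk viscosity $\lambda(\rho)=\rho^{\beta}$. First I would test the momentum equation $\eqref{trans1}_2$ against $(2+\nu)|\mathbf{u}|^{\nu}\mathbf{u}$ (legitimate after replacing $|\mathbf{u}|$ by $(|\mathbf{u}|^{2}+\varepsilon)^{1/2}$ and letting $\varepsilon\to0$). Using $\eqref{trans1}_1$ and the identity $\mathbf{u}\cdot\dot{\mathbf{u}}=\tfrac{1}{2+\nu}\tfrac{D}{Dt}|\mathbf{u}|^{2+\nu}$, the time and convective contributions collapse to $\tfrac{d}{dt}\int_{\mathbb{T}^2}\rho|\mathbf{u}|^{2+\nu}\,dx$. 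Integrating the two viscous terms by parts yields the good dissipation $(2+\nu)\int_{\mathbb{T}^2}\big(\mu|\mathbf{u}|^{\nu}|\nabla\mathbf{u}|^{2}+(\mu+\lambda)|\mathbf{u}|^{\nu}(\operatorname{div}\mathbf{u})^{2}\big)\,dx$ plus a nonnegative term $\tfrac{(2+\nu)\mu\nu}{4}\int_{\mathbb{T}^2}|\mathbf{u}|^{\nu-2}\big|\nabla|\mathbf{u}|^{2}\big|^{2}\,dx$, while differentiating $|\mathbf{u}|^{\nu}$ against the $\lambda$-term and against the pressure leaves remainders $R_{2}$ and $R_{1}$; hence
\[
\frac{d}{dt}\int_{\mathbb{T}^2}\rho|\mathbf{u}|^{2+\nu}\,dx+(2+\nu)\int_{\mathbb{T}^2}\big(\mu|\mathbf{u}|^{\nu}|\nabla\mathbf{u}|^{2}+(\mu+\lambda)|\mathbf{u}|^{\nu}(\operatorname{div}\mathbf{u})^{2}\big)\,dx\le |R_{1}|+|R_{2}|.
\]

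\textbf{The crux (remainder $R_2$).} Since $|\nabla|\mathbf{u}|^{\nu}\cdot\mathbf{u}|\le\nu|\mathbf{u}|^{\nu}|\nabla\mathbf{u}|$, one has $|R_{2}|\le C\nu\int_{\mathbb{T}^2}(\mu+\rho^{\beta})|\mathbf{u}|^{\nu}|\operatorname{div}\mathbf{u}|\,|\nabla\mathbf{u}|\,dx$. The $\mu$-part is $\le C\nu(2+\nu)\mu\int|\mathbf{u}|^{\nu}|\nabla\mathbf{u}|^{2}$, absorbed once $\nu_{0}$ is small. For the $\rho^{\beta}$-part, Cauchy--Schwarz together with $\rho^{\beta}\le R_{T}^{\beta}$ gives the bound
\[
C\nu(2+\nu)R_{T}^{\beta/2}\Big(\int_{\mathbb{T}^2}(\mu+\lambda)|\mathbf{u}|^{\nu}(\operatorname{div}\mathbf{u})^{2}\,dx\Big)^{1/2}\Big(\int_{\mathbb{T}^2}|\mathbf{u}|^{\nu}|\nabla\mathbf{u}|^{2}\,dx\Big)^{1/2},
\]
and, because the lemma takes $\nu R_{T}^{\beta/2}=\nu_{0}$, choosing $\nu_{0}$ small (depending only on $\mu$ and $\mathbb{T}^2$) lets Young's inequality absorb this into the two dissipative terms on the left. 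This balance is exactly what forces the scaling $\nu=R_{T}^{-\beta/2}\nu_{0}$, and it is the main obstacle: a fixed $\nu$ would not suffice, yet the $R_T$ it introduces must be eliminated entirely, which is the content of the next step.

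\textbf{Pressure term $R_1$.} Writing $R_{1}=(2+\nu)\int_{\mathbb{T}^2}P(Z)\big(|\mathbf{u}|^{\nu}\operatorname{div}\mathbf{u}+\nabla|\mathbf{u}|^{\nu}\cdot\mathbf{u}\big)\,dx$, one gets $|R_{1}|\le C\int_{\mathbb{T}^2}P(Z)|\mathbf{u}|^{\nu}|\nabla\mathbf{u}|\,dx\le\tfrac18(2+\nu)\mu\int|\mathbf{u}|^{\nu}|\nabla\mathbf{u}|^{2}+C\int_{\mathbb{T}^2}P(Z)^{2}|\mathbf{u}|^{\nu}\,dx$. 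The essential point is to control $\int P(Z)^{2}|\mathbf{u}|^{\nu}$ with no power of $R_{T}$: using the equivalence $\eqref{equzrho}$, the uniform-in-time bound $\|\rho\|_{L^{4\gamma}}\le C$ from Lemma~\ref{cor:rho-Lp-estimate}, Hölder's inequality (conjugate exponents $2,2$, noting $2\nu<2$ and $|\mathbb{T}^2|=1$), the Poincaré-type estimate of Lemma~\ref{lem:poincare-type} and the energy bound $\eqref{basicestimate}$, I would estimate
\[
\int_{\mathbb{T}^2}P(Z)^{2}|\mathbf{u}|^{\nu}\,dx\le C\|\rho\|_{L^{4\gamma}}^{2\gamma}\|\mathbf{u}\|_{L^{2}}^{\nu}\le C\big(1+\|\nabla\mathbf{u}\|_{L^{2}}^{2}\big),
\]
with $C$ independent of $R_{T}$.

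\textbf{Conclusion.} Collecting the previous steps gives $\tfrac{d}{dt}\int_{\mathbb{T}^2}\rho|\mathbf{u}|^{2+\nu}\,dx\le C\big(1+\|\nabla\mathbf{u}\|_{L^{2}}^{2}\big)$. Integrating over $(0,t)$, using $\int_{0}^{T}\|\nabla\mathbf{u}\|_{L^{2}}^{2}\,dt\le C$ (which follows from $\eqref{basicestimate}$ and the identity $\|\nabla\mathbf{u}\|_{L^{2}}^{2}=\|\operatorname{div}\mathbf{u}\|_{L^{2}}^{2}+\|\omega\|_{L^{2}}^{2}$ on $\mathbb{T}^2$) and $\int_{\mathbb{T}^2}\rho_{0}|\mathbf{u}_{0}|^{2+\nu}\,dx\le\|\rho_{0}\|_{L^{\infty}}\|\mathbf{u}_{0}\|_{H^{1}}^{2+\nu}\le C$, I obtain $\sup_{0\le t\le T}\int_{\mathbb{T}^2}\rho|\mathbf{u}|^{2+\nu}\,dx\le C(T)$. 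The subtlety running through the argument is that $R_{T}$ enters only via $\nu$ and is removed in the $R_2$-step, so the final constant genuinely depends on $T$ and the data only; retaining enough dissipation to absorb the density-dependent bulk-viscosity correction while keeping this $R_{T}$-independence is precisely what dictates $\nu\sim R_{T}^{-\beta/2}$.
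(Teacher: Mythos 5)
Your proposal is correct and follows essentially the same route as the paper: test $\eqref{trans1}_2$ against $(2+\nu)|\mathbf{u}|^{\nu}\mathbf{u}$, integrate by parts, and absorb the density-dependent bulk-viscosity correction through Young's inequality, which is exactly where the rescaling $\nu R_T^{\beta/2}=\nu_0$ is used — this is the crux in both arguments. The only divergence is minor and cosmetic: for the pressure remainder the paper applies Young with exponents $\tfrac{2+\nu}{\nu},\tfrac{2+\nu}{2}$ to produce a $\rho|\mathbf{u}|^{2+\nu}$ term that is closed by Gronwall together with the $L^p$ density bound \eqref{rhohigh1}, whereas you bound $\int P(Z)^2|\mathbf{u}|^\nu\,dx$ directly via H\"older, Lemma~\ref{lem:poincare-type} and \eqref{basicestimate}, sidestepping the Gronwall step; both reach the same $C(T)$.
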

\begin{proof}
Multiplying the equation $\eqref{trans1}_2$ by $(2+\nu)|\mathbf{u}|^{\nu}\mathbf{u}$, and integrating over $\mathbb{T}^2$, we obtain
\begin{align*}
&\frac{\mathrm{d}}{\mathrm{d}t}\int_{\mathbb{T}^2}\rho|\mathbf{u}|^{2+\nu}dx +(2+\nu)\int_{\mathbb{T}^2}|\mathbf{u}|^{\nu}\left(\mu|\nabla \mathbf{u}|^{2}+(\mu+\rho^{\beta})(\mathrm{div}\mathbf{u})^{2}+\mu\nu\nabla|\mathbf{u}||^2\right)dx\\
&\leq(2+\nu)\nu\int(\mu+\rho^{\beta})|\mathrm{div}\mathbf{u}||\mathbf{u}|^{\nu}|\nabla \mathbf{u}|dx+C\int_{\mathbb{T}^2}\rho^{\gamma}|\mathbf{u}|^{\nu}|\nabla \mathbf{u}|dx\\
&\leq\frac{2+\nu}{2}\int(\mu+\rho^{\beta})(\mathrm{div}\mathbf{u})^{2}|\mathbf{u}|^{\nu}dx+\frac{2+\nu}{2}\nu_{0}^{2}(\mu+1)\int|\mathbf{u}|^{\nu}|\nabla u|^{2}dx\\
&\quad+\mu\int_{\mathbb{T}^2}|\mathbf{u}|^{\nu}|\nabla \mathbf{u}|^{2}dx+C\int_{\mathbb{T}^2}\rho|\mathbf{u}|^{2+\nu}dx+C\int_{\mathbb{T}^2}\rho^{(2+\nu)\gamma-\nu/2}dx,
\end{align*}
provided $\nu_0(\mu)$ is taken sufficiently small, Gronwall's inequality together with \eqref{rhohigh1} gives \eqref{rhou2}.
\end{proof}

\begin{lema}
\label{lem:commutator_Linf}
Assume $\beta > 1$. For any $\varepsilon > 0$, there exists a positive constant $C(\varepsilon, T)$ depending only on $\varepsilon$, $T$, $\mu$, $\beta$, $\gamma$, $\|\rho_0\|_{L^\infty}$, $\|Z_0\|_{L^\infty}$ and $\|\mathbf{u}_0\|_{H^1}$ such that
\begin{equation}\label{commutatornew}
\|F\|_{L^\infty} \le \frac{C(\varepsilon, T) A_2^2}{e + A_1^2} + C(\varepsilon, T)(1 + A_3^2) R_T^{1 + \frac{\beta}{4} + \varepsilon}.
\end{equation}
\end{lema}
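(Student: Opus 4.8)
The strategy is to rerun the proof of Lemma~\ref{commutator estimate} essentially verbatim, changing only the input used to bound $\|\rho\mathbf{u}\|_{L^q}$: in the time-dependent setting the estimate \eqref{rhou2} is available, and it is cheaper in powers of $R_T$ than the $L^\infty$-bound on $\mathbf{u}$ that had to be used in the time-independent case.

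Concretely, I would begin exactly as in Lemma~\ref{commutator estimate}: by the Coifman--Meyer commutator estimates \eqref{eq:commutator-Lp} and \eqref{eq:commutator-gradient} together with \eqref{a3u}, for $q\in(8,\infty)$ one has
\[
\|F\|_{L^q}\le C(q)\|\nabla\mathbf{u}\|_{L^2}\|\rho\mathbf{u}\|_{L^q},\qquad
\|\nabla F\|_{L^{4q/(q+4)}}\le C(q)\|\nabla\mathbf{u}\|_{L^4}\|\rho\mathbf{u}\|_{L^q},
\]
and the Gagliardo--Nirenberg interpolation \eqref{ffina} reduces $\|F\|_{L^\infty}$ to the control of $\|\nabla\mathbf{u}\|_{L^4}$ and $\|\rho\mathbf{u}\|_{L^q}$. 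The factor $\|\nabla\mathbf{u}\|_{L^4}$ is handled by \eqref{u4}, exactly as before. For $\|\rho\mathbf{u}\|_{L^q}$, the crude bound $\|\rho\|_{L^{2q}}\|\mathbf{u}\|_{L^{2q}}$ is unhelpful here because the uniform density integrability \eqref{rhohigh1} comes with constants growing in the exponent and would blow up as $q\to\infty$; the route through $\|\mathbf{u}\|_{L^\infty}$ and \eqref{infty} is what produced the $R_T^{3/2}$ loss in \eqref{rhouq}. Instead I would use \eqref{rhou2}: with $\nu=R_T^{-\beta/2}\nu_0$, the bound $\sup_t\int_{\mathbb{T}^2}\rho|\mathbf{u}|^{2+\nu}\,dx\le C(T)$ holds with a constant independent of $q$, and writing $\rho\mathbf{u}=(\rho|\mathbf{u}|^{2+\nu})^{1/q}\rho^{1-1/q}|\mathbf{u}|^{1-(2+\nu)/q}$, applying H\"older, controlling the remaining $\mathbf{u}$-factor by $\|\nabla\mathbf{u}\|_{L^2}$ via \eqref{eq:poincare-sobolev}, and using \eqref{logestimate} for the logarithmic term, one obtains an estimate of the schematic form
\[
\|\rho\mathbf{u}\|_{L^q}\le C(q,T)\,R_T^{\beta/4+\varepsilon}\,(e+A_3)
+C\Bigl(\tfrac{R_T^{-1}A_2^2}{(e+A_1^2)(e+A_3)^{4}}\Bigr)^{\frac12-\frac1q},
\]
in which the power of $R_T$ on the main part is $\beta/4+\varepsilon$ instead of $3/2$; this is exactly the slack that lets the present lemma carry $R_T^{1+\beta/4+\varepsilon}$ and is ultimately why $\beta>4/3$ suffices while the time-independent statement needed $\beta>3/2$.

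Finally, I would substitute this bound and \eqref{u4} into \eqref{ffina}, organise the resulting terms as in \eqref{m1}--\eqref{m2m3}, using \eqref{a1a3} to trade $e+A_3$ for $(e+A_1^2)^{1/2}$, the $(e+A_3)^4$ in the denominator of the $A_2$-piece to neutralise the residual powers of $A_3$, and \eqref{logestimate} to absorb the logarithms; a Young inequality then peels off the single non-absorbable contribution $\frac{C(\varepsilon,T)A_2^2}{e+A_1^2}$, and choosing $q$ large and the auxiliary exponent $\alpha$ small leaves exactly $C(\varepsilon,T)(1+A_3^2)R_T^{1+\beta/4+\varepsilon}$, which is \eqref{commutatornew}. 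As in Lemma~\ref{commutator estimate}, the only delicate point is the bookkeeping of the $R_T$- and $A_3$-exponents through these manipulations: one must verify that the coefficient of $A_3^2$ stays linear and that none of the combinations of the fractional powers $\|\nabla\mathbf{u}\|_{L^4}^{4/q}$, $\|\rho\mathbf{u}\|_{L^q}$ and $\|F\|_{L^q}^{1-4/q}$ produces a power of $R_T$ exceeding $1+\beta/4+\varepsilon$.
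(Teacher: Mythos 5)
Your strategy is the one the paper follows: re-use the Gagliardo--Nirenberg reduction \eqref{ffina} of $\|F\|_{L^\infty}$ to $\|\nabla\mathbf{u}\|_{L^4}$ and $\|\rho\mathbf{u}\|_{L^q}$, keep the estimate \eqref{u4} for the former, and replace the $\|\mathbf{u}\|_{L^\infty}$-route of \eqref{rhouq} by the time-dependent bound \eqref{rhou2} so that the density factor costs only $R_T^{\beta/4}$ rather than $R_T^{1/2}$. That is exactly how the paper derives \eqref{rhouqnew} and closes the proof by choosing $q$ large. The conclusion and the reason it works ($\beta>4/3$ vs.\ $\beta>3/2$) are correctly identified.

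Two small points of bookkeeping where your write-up differs from the paper's and where the paper's route is cleaner. First, your three-factor split $\rho\mathbf{u}=(\rho|\mathbf{u}|^{2+\nu})^{1/q}\rho^{1-1/q}|\mathbf{u}|^{1-(2+\nu)/q}$ does not produce admissible H\"older exponents: if the first factor is placed in $L^q$ (to recover $\int\rho|\mathbf{u}|^{2+\nu}$) and the second in $L^\infty$, the third is forced into $L^\infty$, which is precisely the $\|\mathbf{u}\|_{L^\infty}$ bound you are trying to avoid. The paper instead interpolates $\|\rho(\mathbf{u}-\overline{\rho\mathbf{u}})\|_{L^q}$ between $L^{2+\nu}$ and $L^r$ with $r=(q-2)(2+\nu)/\nu$, pulls a factor $R_T$ out of the $L^r$ piece, and uses the Poincar\'e--Sobolev inequality $\|\mathbf{u}-\overline{\rho\mathbf{u}}\|_{L^r}\le Cr^{1/2}\|\mathbf{u}\|_{H^1}$ to convert $r^{1/2}\sim R_T^{\beta/4}$; this gives the clean one-term bound \eqref{rhouqnew} without any $A_2$-piece and without invoking \eqref{logestimate} at this stage. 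Second, the exponent of $R_T$ in the intermediate $\|\rho\mathbf{u}\|_{L^q}$ bound is $1+\beta(q-2)/(4q)$, not $\beta/4+\varepsilon$; the leading $1$ is supplied by the factor $R_T^{1-2/q}$ from pulling the density out of the $L^r$ norm, and it is needed to reproduce the stated power $1+\beta/4+\varepsilon$ in \eqref{commutatornew}. Neither issue affects the viability of the argument, but they should be corrected in a complete write-up.
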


\begin{proof}
Given that $q > 4$, define $r\triangleq (q - 2)(2 + \nu)/\nu>2$.   
Applying H\"older's inequality together with estimates \eqref{rhou2}, \eqref{basicestimate} and \eqref{eq:poincare-type}, we obtain
\begin{equation}
\begin{aligned}\label{rhouqnew}
\|\rho \mathbf{u}\|_{L^q} &\leq C\|\rho(\mathbf{u}-\overline{\rho \mathbf{u}})\|_{L^q}+\|\rho\|_{L^q}\\
    &\leq C \|\rho (\mathbf{u}-\overline{\rho \mathbf{u}})\|_{L^{2+\nu}}^{2/q} \|\rho (\mathbf{u}-\overline{\rho \mathbf{u}})\|_{L^r}^{1-2/q} +\|\rho\|_{L^q}\\
    &\leq CR_T \|(\mathbf{u}-\overline{\rho \mathbf{u}})\|_{L^r}^{1-2/q} +\|\rho\|_{L^q}\\
    &\leq CR_T \bigl( r^{1/2} \|\mathbf{u}\|_{H^1} \bigr)^{1-2/q} \\
    &\leq C(q) \, R_T^{1+\frac{\beta(q-2)}{4q}} \, \bigl( 1+\|\nabla \mathbf{u}\|_{L^2} \bigr)^{1-2/q}.
\end{aligned}
\end{equation}
We start from the pointwise estimate for the commutator derived in \eqref{ffina}, which holds for any $q > 8$:
\begin{equation}\label{ffanew}
\|F\|_{L^\infty} \le C(q) R_T (A_3^2 + 1) + C(q) A_3^{1 - \frac{4}{q}} \|\nabla \mathbf{u}\|_{L^4}^{\frac{4}{q}} \|\rho \mathbf{u}\|_{L^q}.
\end{equation}
Based on \eqref{rhouqnew}, we are required to estimate the commutator term. The key step in this estimation involves controlling the following quantity:
\begin{equation}
\begin{aligned}
& A_3^{\frac{q-4}{q}} \|\nabla \mathbf{u}\|_{L^4}^{\frac{4}{q}} \|\rho \mathbf{u}\|_{L^q} \\
&\quad \le C(q, T) R_T^{1 + \frac{\beta(q-2)}{4q}} \bigl( A_3^{2 - \frac{6}{q}} + 1 \bigr) \|\nabla \mathbf{u}\|_{L^4}^{\frac{4}{q}} \\
&\quad \le C(q, T) R_T^{1 + \frac{\beta(q-2)}{4q} + \frac{4\tilde{C}}{q}} \bigl( A_3^{2 - \frac{2}{q}} + 1 \bigr) \Bigl( \frac{A_2^2}{e + A_1^2} \Bigr)^{\frac{1}{q}} \\
&\qquad + C(q, T) R_T^{1 + \frac{\beta(q-2)}{4q} + \frac{4\tilde{C}}{q}} \bigl( A_3^2 + 1 \bigr) \\
&\quad \le C(q, T) R_T^{1 + \frac{\beta}{4} + \frac{8\tilde{C}}{q-1}} (1 + A_3^2) + \frac{A_2^2}{e + A_1^2},
\end{aligned}
\end{equation}
where $\tilde{C} > 1$ is a constant depending only on $\beta$ and $\gamma$.

Inserting this estimate into \eqref{ffanew} gives
\begin{equation}
\|F\|_{L^\infty} \le C(q, T) R_T^{1 + \frac{\beta}{4} + \frac{8\tilde{C}}{q-1}} (1 + A_3^2) + C(q, T) \frac{A_2^2}{e + A_1^2}.
\end{equation}

Finally, by choosing $q$ sufficiently large so that $\frac{8\tilde{C}}{q-1} < \varepsilon$, we obtain the desired bound \eqref{commutatornew}.
This completes the proof of Lemma \ref{lem:commutator_Linf}.
\end{proof}

\begin{prop}\label{propdependentt}
Assume \eqref{beta23} holds. Then there exists a constant $C(T) > 0$ such that
\begin{equation}\label{estimateoft}
\sup_{0 \le t \le T} \big( \|\rho(t)\|_{L^{\infty}} + \|\mathbf{u}(t)\|_{H^1} \big) 
+ \int_0^T \big( \|\omega(t)\|_{H^1}^2 + \|G(t)\|_{H^1}^2 + A_2^2(t) \big) \, dt \le C(T),
\end{equation}.
\end{prop}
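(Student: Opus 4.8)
The plan is to mirror the proof of Lemma~\ref{rhodependent} almost verbatim, the only substitution being that the time-dependent commutator bound of Lemma~\ref{lem:commutator_Linf} replaces Lemma~\ref{commutator estimate}. Concretely, I would rewrite the transport-type identity \eqref{changeequ} as a scalar ODE along particle trajectories: with $y=\theta(\rho)$, $g(y)=-\big(\theta^{-1}(y)\big)^{\gamma}\theta_0^{\gamma}=-P(Z)$ (here $\theta_0$ is the transported initial temperature, so $c_*\le\theta_0\le c^*$), and $h(t)=-\psi(t)+\int_0^t(\overline P-\overline G-F)\,ds$, identity \eqref{changeequ} reads $\frac{D}{Dt}y=g(y)+\frac{D}{Dt}h$ along trajectories, with $y(0)=\theta(\rho_0)\le C$. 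The key structural observation is that $g\equiv -P(Z)\le 0$ and $g(\zeta)\to-\infty$ as $\zeta\to\infty$, so Lemma~\ref{lem:Zlotnik} applies with $N_1=0$ and $\bar\zeta=0$; thus it suffices to bound the \emph{total} increment of $h$ by a single (possibly $R_T$- and $T$-dependent) constant $N_0$. This is exactly what avoids the otherwise-fatal auxiliary constraint on $\gamma$ discussed below.

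I would then establish the three ingredients entering $N_0$. First, for $\psi=\Delta^{-1}\mathrm{div}(\rho\mathbf{u})$, combining the Brezis--Wainger inequality \eqref{eq:brezis-wainger}, the elliptic bound $\|\nabla\psi\|_{L^p}\lesssim\|\rho\mathbf{u}\|_{L^p}$, the momentum estimate \eqref{rhouqnew}, and the logarithmic estimate \eqref{logestimate}, one obtains $\|\psi\|_{L^\infty}\le C(\varepsilon,T)R_T^{1+\varepsilon}$ for any $\varepsilon>0$ after taking the auxiliary exponent $\alpha\in(0,1)$ small. Second, $|\overline P-\overline G|\le C+CA_3^2$ together with \eqref{basicestimate} gives $\int_0^T|\overline P-\overline G|\,dt\le C(T)$. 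Third, Lemma~\ref{lem:commutator_Linf} yields $\|F\|_{L^\infty}\le C(\varepsilon,T)\frac{A_2^2}{e+A_1^2}+C(\varepsilon,T)(1+A_3^2)R_T^{1+\beta/4+\varepsilon}$; integrating in $t$ and using $\int_0^T\frac{A_2^2}{e+A_1^2}\,dt\le C(\alpha)R_T^{1+\alpha\beta}$ (from \eqref{logestimate}) and $\int_0^T A_3^2\,dt\le C$ (from \eqref{basicestimate}), with $\alpha$ small enough, gives $\int_0^T\|F\|_{L^\infty}\,dt\le C(\varepsilon,T)R_T^{1+\beta/4+\varepsilon}$. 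Summing the three, $h(t_2)-h(t_1)\le N_0:=C(\varepsilon,T)R_T^{1+\beta/4+\varepsilon}$ for all $0\le t_1<t_2\le T$.

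Then Lemma~\ref{lem:Zlotnik} gives $y(t)\le\max\{y(0),0\}+N_0$ along every trajectory; since $\theta$ is strictly increasing with $\theta(\rho)\ge\frac1\beta\rho^\beta$ for $\rho$ large, this reads $R_T^\beta\le C+C(\varepsilon,T)R_T^{1+\beta/4+\varepsilon}$. Because $\beta>4/3$ I would fix $\varepsilon\in\big(0,\tfrac{3\beta-4}{4}\big)$, so that $1+\tfrac\beta4+\varepsilon<\beta$, and a Young-type absorption yields $\sup_{0\le t\le T}\|\rho(t)\|_{L^\infty}\le C(T)$; by the equivalence \eqref{equzrho} the same bound holds for $\|Z(t)\|_{L^\infty}$. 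With the density bound in hand I would close the remaining quantities in \eqref{estimateoft}: inserting $R_T\le C(T)$ into \eqref{a1b} gives $\frac{d}{dt}A_1^2+A_2^2\le C(T)A_3^2A_1^2+C(T)A_3^2$, so Gronwall together with $\int_0^T A_3^2\,dt\le C$ yields $\sup_{0\le t\le T}A_1^2(t)+\int_0^T A_2^2\,dt\le C(T)$; then \eqref{wg} gives $\int_0^T(\|\omega\|_{H^1}^2+\|G\|_{H^1}^2)\,dt\le C(T)$, and the bound on $\|\mathbf{u}\|_{H^1}$ follows from \eqref{a1a3}, \eqref{a3u}, Lemma~\ref{lemma:velocity_T2} and \eqref{basicestimate}.

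The hardest part will be the third ingredient: one must guarantee that the power of $R_T$ in the time-integrated bound for $\|F\|_{L^\infty}$ is strictly below $\beta$. This is precisely where the refinement $R_T^{1+\beta/4+\varepsilon}$ of Lemma~\ref{lem:commutator_Linf} is decisive; that refinement in turn rests on the extra integrability $\sup_{0\le t\le T}\int_{\mathbb{T}^2}\rho|\mathbf{u}|^{2+\nu}\,dx\le C(T)$ of \eqref{rhou2} and the consequent gain in the $L^q$-bound \eqref{rhouqnew} for $\rho\mathbf{u}$, and it is the reason the threshold on $\beta$ drops from the $\beta>3/2$ of Lemma~\ref{rhodependent} to $\beta>4/3$. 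A subsidiary delicate point is the bookkeeping of auxiliary exponents: $\alpha$ must be chosen small only after $\varepsilon$ is fixed, so that the spurious powers $R_T^{1+\alpha\beta}$ and $R_T^{1+\varepsilon}$ coming from the logarithmic estimate and from $\|\psi\|_{L^\infty}$ never exceed $R_T^{1+\beta/4+\varepsilon}$; and, crucially, the sign-definiteness $g=-P(Z)\le 0$ must be exploited so that no genuine $N_1>0$ is needed, which is what removes the term $R_T^{\beta(1+\beta/4+\varepsilon)/\gamma}$ that would otherwise force an extra lower bound on $\gamma$.
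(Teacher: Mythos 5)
Your proof is correct and follows essentially the same route as the paper: the Zlotnik inequality applied to the trajectory ODE derived from \eqref{changeequ}, with the time-dependent commutator bound of Lemma~\ref{lem:commutator_Linf} playing the role that Lemma~\ref{commutator estimate} plays in Lemma~\ref{rhodependent}, followed by the closure of the remaining quantities in \eqref{estimateoft} via \eqref{a1b}, \eqref{wg} and Gronwall. Your explicit choice $N_1=0$, justified by the sign $g=-P(Z)\le 0$, is a sound and useful clarification of the paper's terse sketch: it explains why the final inequality $R_T^{\beta}\le C+C(\varepsilon,T)R_T^{1+\beta/4+\varepsilon}$ is free of $\gamma$, whereas a naive nonzero $N_1\sim R_T^{1+\beta/4+\varepsilon}$ would force $\bar\zeta\sim R_T^{\beta(1+\beta/4+\varepsilon)/\gamma}$ and hence the spurious restriction $\gamma>1+\beta/4$, which is not assumed.
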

\begin{proof}
From Lemmas \ref{basiclogestiamte} and Lemma \ref{lem:commutator_Linf}, we obtain
\begin{equation}
R_T^\beta\le C(\varepsilon,T)R_T^{\max\{1+\beta/4+\varepsilon,\;4/3\}}.
\end{equation}
Since $\beta>4/3$, choosing $\varepsilon$ sufficiently small yields
\begin{equation}
\sup_{0\le t\le T}\|\rho(t)\|_{L^\infty}\le C(T).
\end{equation}
Finally, combining \eqref{a1b}, \eqref{basicestimate}, \eqref{wg}, and Gronwall's inequality, we obtain \eqref{propdependentt} completes the proof of Prop \ref{propdependentt}.
\end{proof}
\subsection{A priori estimates (II): higher order estimates} 
\begin{lema}\label{timehighorderestimate}
Assume that there exists a positive constant $M$ such that
\begin{equation}\label{rhobound}
\sup_{0 \leq t \leq T} (\| \rho(\cdot, t) \|_{L^\infty}+\| Z(\cdot, t) \|_{L^\infty}) \leq M.
\end{equation}
Then, there exists a positive constant $C(M)$, depending only on $c_{*}, c^{*}, M, \mu, \beta, \gamma,$ and $\| \mathbf{u}_0 \|_{H^1}$, such that
\begin{equation}\label{highestimate}
\sup_{0 \leq t \leq T} \sigma(t) \int_{\mathbb{T}^2} \rho |\dot{\mathbf{u}}|^2 \, dx + \int_0^T \sigma(t) \| \nabla \dot{\mathbf{u}} \|_{L^2}^2 \, dt \leq C(M),
\end{equation}
where  $\sigma(t) = \min\{1, t\}$. Furthermore, for any $p \in [1, \infty)$, there exists a positive constant  $C(p, M)$, depending only on $c_{*}, c^{*},p, M, \mu, \beta, \gamma, \| \rho_0 \|_{L^\infty}$, and  $\| \mathbf{u}_0 \|_{H^1}$, such that
\begin{equation}\label{nablau}
\sup_{0 \leq t \leq T} \sigma(t)^{1/2} \| \nabla \mathbf{u} \|_{L^p} \leq C(p, M).
\end{equation}
\end{lema}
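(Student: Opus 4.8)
The plan is to run a two-tier Hoff-type energy estimate for the material derivative $\dot{\mathbf{u}}=\mathbf{u}_t+\mathbf{u}\cdot\nabla\mathbf{u}$, using the density bound \eqref{rhobound} as the only information about $\rho$ and $Z$; in particular no lower bound on the density is invoked. The weight $\sigma(t)=\min\{1,t\}$ is forced on us because, with only $\mathbf{u}_0\in H^1$, the relation $\rho_0\dot{\mathbf{u}}_0=\mu\Delta\mathbf{u}_0+\nabla((\mu+\lambda_0)\operatorname{div}\mathbf{u}_0)-\nabla P_0$ does not put $\sqrt{\rho_0}\,\dot{\mathbf{u}}_0$ in $L^2$, so the singularity at $t=0$ must be absorbed by $\sigma$. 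For the first, weight-free tier I would observe that since $R_T\le M$ the differential inequality \eqref{a1b} and the bound \eqref{logestimate} hold with constants depending only on $M$; together with the basic energy estimate \eqref{basicestimate} and Grönwall's inequality they give $\sup_{0\le t\le T}\bigl(A_1^2(t)+A_3^2(t)\bigr)+\int_0^T A_2^2(t)\,dt\le C(M)$, equivalently $\sup_{[0,T]}\|\nabla\mathbf{u}\|_{L^2}^2+\int_0^T\!\!\int_{\mathbb{T}^2}\rho|\dot{\mathbf{u}}|^2\,dx\,dt\le C(M)$, and then \eqref{gw} and \eqref{wg} also yield $\int_0^T\bigl(\|\omega\|_{H^1}^2+\|G\|_{H^1}^2\bigr)\,dt\le C(M)$. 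These furnish the time-integrable quantities needed below.

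For the second tier I would apply the operator $\partial_t(\cdot)+\operatorname{div}(\mathbf{u}\,\cdot)$ to the momentum equation written as $\partial_t(\rho\mathbf{u})+\operatorname{div}(\rho\mathbf{u}\otimes\mathbf{u})=N$ with $N\triangleq\mu\Delta\mathbf{u}+\nabla((\mu+\lambda)\operatorname{div}\mathbf{u})-\nabla P$, obtaining $\rho\,\tfrac{D\dot{\mathbf{u}}}{Dt}=\partial_t N+\operatorname{div}(\mathbf{u}N)$; pairing with $\dot{\mathbf{u}}$ and integrating by parts over $\mathbb{T}^2$ produces $\tfrac12\tfrac{d}{dt}\!\int_{\mathbb{T}^2}\!\rho|\dot{\mathbf{u}}|^2\,dx+\int_{\mathbb{T}^2}\!\bigl(\mu|\nabla\dot{\mathbf{u}}|^2+(\mu+\lambda)(\operatorname{div}\dot{\mathbf{u}})^2\bigr)\,dx=\mathcal E(t)$, where $\mathcal E$ collects the cubic convective terms in $\mathbf{u}$ (from $\operatorname{div}(\mathbf{u}N)$ and from commuting $\partial_t+\mathbf{u}\cdot\nabla$ past $\Delta$ and $\nabla\operatorname{div}$), the pressure terms handled through the pressure equation \eqref{eq:pressure} and $0\le P\le M^\gamma$, and the variable-viscosity terms, in which $\partial_t\rho^\beta=-\operatorname{div}(\rho^\beta\mathbf{u})-(\beta-1)\rho^\beta\operatorname{div}\mathbf{u}$ is bounded using $\rho\le M$. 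The key point is that every second-order velocity quantity in $\mathcal E$ is routed through the effective viscous flux $G$ of \eqref{3.12} and the vorticity $\omega$, never through elliptic regularity for $\mathbf{u}$ itself (which would require $\|\nabla\rho\|$ or $\|\nabla Z\|$, still unavailable): by \eqref{gw}, $\|\nabla G\|_{L^2}+\|\nabla\omega\|_{L^2}\le C\|\rho\dot{\mathbf{u}}\|_{L^2}\le C(M)A_2$, and feeding this into the two-dimensional interpolation inequality of Lemma \ref{lem:poincare-sobolev} applied to $\omega$ and $G$ gives $\|\nabla\mathbf{u}\|_{L^4}^2\le C(M)(1+A_2)$, while $\dot{\mathbf{u}}$ itself is controlled through the density-weighted Poincaré inequality of Lemma \ref{lem:poincare-type}. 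After Cauchy–Schwarz to absorb $\tfrac{\mu}{2}\int|\nabla\dot{\mathbf{u}}|^2$ into the left side, one reaches $\tfrac{d}{dt}\!\int\rho|\dot{\mathbf{u}}|^2+\mu\!\int|\nabla\dot{\mathbf{u}}|^2\le C(M)\bigl(1+\|\nabla\mathbf{u}\|_{L^4}^4+\|\nabla\mathbf{u}\|_{L^2}^2\bigr)\bigl(1+\int\rho|\dot{\mathbf{u}}|^2\bigr)$; multiplying by $\sigma$, using $0\le\sigma'\le1$, $\sigma\le1$ and the first-tier bound $\int_0^T\!\int\rho|\dot{\mathbf{u}}|^2\le C(M)$ to absorb $\sigma'\int\rho|\dot{\mathbf{u}}|^2$, and applying Grönwall's inequality with the time-integrable coefficient (finite since $\|\nabla\mathbf{u}\|_{L^4}^4\le C(M)(1+A_2^2)$ and $\int_0^T A_2^2\le C(M)$) yields \eqref{highestimate}.

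For \eqref{nablau} I would decompose $\nabla\mathbf{u}$ through $\omega$ and $\operatorname{div}\mathbf{u}=(G+P-\overline{P})/(2\mu+\lambda)$, so that $\|\nabla\mathbf{u}\|_{L^p}\le C(p)\bigl(\|\omega\|_{L^p}+\|G\|_{L^p}+\|P-\overline{P}\|_{L^p}\bigr)$; estimating $\|\omega\|_{L^p}$ and $\|G\|_{L^p}$ by Gagliardo–Nirenberg interpolation between $L^2$ and $H^1$ (Lemma \ref{lem:poincare-sobolev}) together with \eqref{gw}, $\|\omega\|_{L^2}+\|G\|_{L^2}\le C(M)$ and $0\le P\le M^\gamma$ gives $\|\nabla\mathbf{u}\|_{L^p}\le C(p,M)\bigl(1+A_2^{1-2/p}\bigr)$; then $\sigma^{1/2}\|\nabla\mathbf{u}\|_{L^p}\le C(p,M)\bigl(1+\sigma^{1/p}(\sigma^{1/2}A_2)^{1-2/p}\bigr)\le C(p,M)$ by \eqref{highestimate} and $\sigma\le1$.

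The main obstacle is closing the second tier: in two dimensions the error functional $\mathcal E(t)$ is a sum of borderline-critical products, and the decisive structural constraint is that no derivative of $\rho$ or $Z$ may appear anywhere, which forces one to express all second-order velocity information through $G$ and $\omega$ and their $L^p$-elliptic estimates \eqref{gw}, and to lean on the two-dimensional interpolation inequalities of Lemma \ref{lem:poincare-sobolev} and, crucially in the presence of vacuum, on the density-weighted Poincaré inequality of Lemma \ref{lem:poincare-type} to control $\dot{\mathbf{u}}$ (not merely $\nabla\dot{\mathbf{u}}$). A secondary but essential ingredient is the bookkeeping of the time weight: it is exactly the pair $\sigma'\le1$ and the integrated first-tier bound that accommodates the $H^1$-only initial data.
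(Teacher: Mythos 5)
Your proposal follows essentially the same route as the paper: a first, unweighted tier giving $\sup_{[0,T]}A_1^2+A_3^2+\int_0^T A_2^2\,dt\le C(M)$ from \eqref{a1b}, \eqref{basicestimate}, \eqref{a3u} and \eqref{rhobound}; a second, Hoff-type tier obtained by applying $\dot u_j[\partial_t+\operatorname{div}(\mathbf{u}\cdot)]$ to the momentum equation and pairing with $\dot{\mathbf{u}}$; estimation of the error terms through $\|\nabla\mathbf{u}\|_{L^4}^4$, which in turn is routed through $G$ and $\omega$ and \eqref{gw} so as to avoid any appearance of $\nabla\rho$ or $\nabla Z$; and then multiplication by $\sigma$ and integration. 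Two cosmetic remarks: the coercive contribution that actually survives the integration by parts is $\mu\int\bigl(\tfrac{D}{Dt}\operatorname{div}\mathbf{u}\bigr)^2dx$ rather than $\int(\mu+\lambda)(\operatorname{div}\dot{\mathbf{u}})^2dx$ (the two differ by a quadratic gradient term, absorbable into $\|\nabla\mathbf{u}\|_{L^4}^4$, and $\lambda$ is not constant so it cannot simply be pulled outside), and once $\|\nabla\mathbf{u}\|_{L^4}^4$ is replaced by $C(M)(\|\sqrt{\rho}\dot{\mathbf{u}}\|_{L^2}^2+\|\nabla\mathbf{u}\|_{L^2}^2)$ the closed inequality can be integrated directly using the first-tier bound $\int_0^T\|\sqrt{\rho}\dot{\mathbf{u}}\|_{L^2}^2\,dt\le C(M)$ without invoking Gr\"onwall; your invocation of Lemma~\ref{lem:poincare-type} for $\dot{\mathbf{u}}$ is not actually needed in this lemma (it enters only in the subsequent higher-order estimates). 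None of these affect the validity of the argument.
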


\begin{proof}
Initially, combining \eqref{a1b}, \eqref{basicestimate}, \eqref{a3u} and \eqref{rhobound} yields
\begin{equation}\label{uh1}
\sup_{0 \leq t \leq T} \|\mathbf{u}\|_{H^{1}} + \int\limits_{0}^{T} \Big( \|\nabla \mathbf{u}\|_{L^{2}}^{2} + \|\rho^{1/2} \dot{\mathbf{u}}\|_{L^{2}}^{2} \Big) dt \leq C(M). 
\end{equation}

To derive \eqref{rhobound}, we adopt an approach originating from Hoff \cite{hoff}. Acting with the operator $\dot{u}^{j} [\partial/\partial t + \operatorname{div}(\mathbf{u} \cdot)]$ on the $j$-th component of $\eqref{trans1}_2$, summing over $j$, and integrating over $\mathbb{T}^{2}$ results after integration by parts in
\begin{equation}
\begin{aligned}\label{nfin}
\Big( \frac{1}{2} \int \rho |\dot{u}|^{2} dx \Big)_{t}
&= - \int \dot{u}_{j} \big[ \partial_{j} P_{t} + \operatorname{div}(\partial_{j} P \mathbf{u}) \big] dx \\
&\quad + \mu \int \dot{u}_{j} \big[ \partial_{t} \Delta u_{j} + \operatorname{div}(\mathbf{u} \Delta u_{j}) \big] dx \\
&\quad + \int \dot{u}_{j} \big[ \partial_{jt} ((\mu + \lambda) \operatorname{div} \mathbf{u}) + \operatorname{div}(u \partial_{j} ((\mu + \lambda) \operatorname{div} \mathbf{u})) \big] dx \\
&\triangleq N_{1} + N_{2} + N_{3}. 
\end{aligned}
\end{equation}
For the first term, employing the continuity equation \eqref{trans1}$_1$ and integration by parts gives
\begin{equation}
\begin{aligned}\label{n1}
N_{1} 
&= - \int \dot{u}_{j} \big[ \partial_{j} P_{t} + \operatorname{div}(\partial_{j} P \mathbf{u}) \big] dx \\
&= \int \big[ -P'(Z) Z \operatorname{div} \mathbf{u} \, \partial_{j} \dot{u}_{j} + \partial_{k} (\partial_{j} \dot{u}_{j} u_{k}) P - P \partial_{j} (\partial_{k} \dot{u}_{j} u_{k}) \big] dx \\
&\leq C(M) \|\nabla \mathbf{u}\|_{L^{2}} \|\nabla \dot{\mathbf{u}}\|_{L^{2}} \\
&\leq \frac{\mu}{8} \|\nabla \dot{\mathbf{u}}\|_{L^{2}}^{2} + C(M) \|\nabla \mathbf{u}\|_{L^{2}}^{2}. 
\end{aligned}
\end{equation}

The second term, after similar integration by parts, becomes
\begin{equation}
\begin{aligned}\label{n2}
N_{2} 
&= \mu \int \dot{u}_{j} \big[ \partial_{t} \Delta u_{j} + \operatorname{div}(u \Delta u_{j}) \big] dx \\
&= - \mu \int \big( |\nabla \dot{\mathbf{u}}|^{2} + \partial_{i} \dot{u}_{j} \partial_{k} u_{k} \partial_{i} u_{j} - \partial_{i} \dot{u}_{j} \partial_{i} u_{k} \partial_{k} u_{j} - \partial_{i} \dot{u}_{j} \partial_{i} u_{k} \partial_{k} u_{j} \big) dx \\
&\leq - \frac{3\mu}{4} \int |\nabla \dot{\mathbf{u}}|^{2} dx + C(M) \int |\nabla \mathbf{u}|^{4} dx. `
\end{aligned}
\end{equation}

Concerning the third term, analogous manipulations lead to
\begin{eqnarray}
\label{n3}
N_{3} 
&=& \int \dot{u}_{j} \big[ \partial_{jt} ((\mu + \lambda) \operatorname{div} \mathbf{u}) + \operatorname{div}(u \partial_{j} ((\mu + \lambda) \operatorname{div} \mathbf{u})) \big] dx \nonumber\\
&=& - \int \partial_{j} \dot{u}_{j} \big[ ((\mu + \lambda) \operatorname{div} \mathbf{u})_{t} + \operatorname{div}(\mathbf{u} (\mu + \lambda) \operatorname{div} \mathbf{u}) \big] dx \nonumber \\
&&\quad - \int \dot{u}_{j} \operatorname{div} \big( \partial_{j} u (\mu + \lambda) \operatorname{div} \mathbf{u} \big) dx \nonumber\\
&=& - \int \Big( \frac{D}{Dt} \operatorname{div} \mathbf{u} + \partial_{j} u_{i} \partial_{i} u_{j} \Big) \Big[ (\mu + \lambda) \frac{D}{Dt} \operatorname{div} \mathbf{u} - \rho \lambda'(\rho) \operatorname{div} \mathbf{u} \Big] dx \\
&&\quad + \int \nabla \dot{u}_{j} \cdot \partial_{j} u (\mu + \lambda) \operatorname{div} \mathbf{u} dx \nonumber\\
&&\leq - \frac{\mu}{2} \int \Big( \frac{D}{Dt} \operatorname{div} \mathbf{u} \Big)^{2} dx + \frac{\mu}{8} \|\nabla \dot{\mathbf{u}}\|_{L^{2}}^{2} + C(M) \|\nabla \mathbf{u}\|_{L^{4}}^{4} + C(M) \|\nabla \mathbf{u}\|_{L^{2}}^{2}, \nonumber
\end{eqnarray}
where the third equality uses the identity
\begin{equation}
\begin{aligned}
\big( (\mu + \lambda) \operatorname{div} \mathbf{u} \big)_{t} + \operatorname{div}(\mathbf{u} (\mu + \lambda) \operatorname{div} \mathbf{u})
= (\mu + \lambda) \frac{D}{Dt} \operatorname{div} \mathbf{u} - \rho \lambda'(\rho) \operatorname{div} \mathbf{u},
\end{aligned}
\end{equation}
a direct consequence of \eqref{trans1}$_1$.

Inserting the estimates \eqref{n1}-\eqref{n3} into \eqref{nfin} produces
\begin{equation}
\begin{aligned}\label{na}
\Big( \int \rho |\dot{\mathbf{u}}|^{2} dx \Big)_{t} 
&+ \mu \int |\nabla \dot{\mathbf{u}}|^{2} dx + \mu \int \Big( \frac{D}{Dt} \operatorname{div} \mathbf{u} \Big)^{2} dx \\
&\leq C(M) \|\nabla \mathbf{u}\|_{L^{4}}^{4} + C(M) \|\nabla \mathbf{u}\|_{L^{2}}^{2} \\
&\leq C(M) \big( \|G\|_{L^{4}}^{4} + \|\omega\|_{L^{4}}^{4} + \|P - \overline{P}\|_{L^{4}}^{4} + \|\nabla \mathbf{u}\|_{L^{2}}^{2} \big) \\
&\leq C(M) \big( \|G\|_{L^{2}}^{2} \|G\|_{H^{1}}^{2} + \|\omega\|_{L^{2}}^{2} \|\nabla \omega\|_{L^{2}}^{2} + \|P - \overline{P}\|_{L^{2}}^{2} + \|\nabla \mathbf{u}\|_{L^{2}}^{2} \big) \\
&\leq C(M) \|\rho^{1/2} \dot{\mathbf{u}}\|_{L^{2}}^{2} + C(M) \|\nabla \mathbf{u}\|_{L^{2}}^{2}, 
\end{aligned}
\end{equation}
the last inequality relying on \eqref{wg}, \eqref{a3u} together with \eqref{rhobound}. 

Multiplying \eqref{na} by $\sigma(t) = \min\{1, t\}$ and integrating over $(0, T)$ finally delivers \eqref{highestimate}  thanks to \eqref{uh1}.

To establish \eqref{nablau}, we observe that for any $p \ge 2$,
\begin{equation}
\begin{aligned}
\|\nabla \mathbf{u}\|_{L^{p}} 
&\leq C(p) \|\operatorname{div} \mathbf{u}\|_{L^{p}} + C(p) \|\omega\|_{L^{p}} \\
&\leq C(p) \|G\|_{L^{p}} + C(p) \|P - \overline{P}\|_{L^{p}} + C(p) \|\omega\|_{L^{p}} \\
&\leq C(p) \|G\|_{H^{1}} + C(p) \|\omega\|_{H^{1}} + C(p, M) \\
&\leq C(p, M) \|\rho^{1/2} \dot{\mathbf{u}}\|_{L^{2}} + C(p, M),
\end{aligned}
\end{equation}
where the final bound follows from \eqref{wg} and \eqref{a3u}. Combining this with \eqref{highestimate} yields \eqref{nablau}, thereby concluding the proof of Lemma \ref{timehighorderestimate}.
\end{proof}

\begin{lema}\label{rhohigh}Suppose that \eqref{beta43} is valid. There exists a positive constant $C$, depending only on $T, c_{*}, c^{*},q, \mu, \gamma, \beta, \|\mathbf{u}_0\|_{H^1}$, and $\|\rho_0\|_{W^{1,q}},\,\|Z_0\|_{W^{1,q}}$, such that
\begin{equation}\begin{aligned}\label{rhozhihg}
\sup_{0\leq t\leq T}\Big(\|\rho\|_{W^{1,q}}+\|Z\|_{W^{1,q}}+\| \mathbf{u}\|_{H^1}\Big)\leq C, 
\end{aligned}\end{equation}
for any $q>2$.
\end{lema}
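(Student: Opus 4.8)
The plan is to bound $\|\nabla\rho\|_{L^q}$ and $\|\nabla Z\|_{L^q}$ by a Gronwall argument along the flow, using the transport structure of $\eqref{trans1}_1$ and $\eqref{trans1}_3$, with the logarithmically-singular control of $\|\nabla\mathbf u\|_{L^\infty}$ coming from the Beale--Kato--Majda inequality of Lemma~\ref{lem:BKM}. First I would differentiate the continuity equation: $\partial_t\nabla\rho + (\mathbf u\cdot\nabla)\nabla\rho = -\nabla\rho\,\mathrm{div}\,\mathbf u - \nabla\mathbf u\cdot\nabla\rho - \rho\nabla\mathrm{div}\,\mathbf u$, and similarly for $Z$. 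Multiplying by $|\nabla\rho|^{q-2}\nabla\rho$ and integrating gives $\frac{d}{dt}\|\nabla\rho\|_{L^q}\le C\|\nabla\mathbf u\|_{L^\infty}\|\nabla\rho\|_{L^q}+C\|\rho\|_{L^\infty}\|\nabla\mathrm{div}\,\mathbf u\|_{L^q}$, and the same for $Z$ (using $\|Z\|_{L^\infty}\le M$ from Proposition~\ref{propdependentt} together with \eqref{equzrho}). The term $\|\nabla\mathrm{div}\,\mathbf u\|_{L^q}$ must be rewritten via the effective viscous flux: from $G=(2\mu+\lambda(\rho))\mathrm{div}\,\mathbf u-(P(Z)-\overline{P(Z)})$ one has $\mathrm{div}\,\mathbf u=(2\mu+\lambda(\rho))^{-1}(G+P(Z)-\overline{P(Z)})$, so $\nabla\mathrm{div}\,\mathbf u$ involves $\nabla G$, $\nabla Z$ and $\nabla\rho$; the elliptic estimate $\|\nabla G\|_{L^q}+\|\nabla\omega\|_{L^q}\le C\|\rho\dot{\mathbf u}\|_{L^q}$ from \eqref{gw} converts this into a bound by $\|\rho\dot{\mathbf u}\|_{L^q}$ plus a term proportional to $(\|\nabla\rho\|_{L^q}+\|\nabla Z\|_{L^q})$ (with a constant depending on $M$), which can be absorbed on the left or handled by Gronwall.

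Next I would close the loop on $\|\nabla^2\mathbf u\|_{L^q}$, since it appears inside the BKM logarithm. Standard elliptic regularity for the Lamé-type operator, applied to $\eqref{trans1}_2$ rewritten as $\mu\Delta\mathbf u+\nabla((\mu+\lambda)\mathrm{div}\,\mathbf u)=\rho\dot{\mathbf u}+\nabla P(Z)$, gives $\|\nabla^2\mathbf u\|_{L^q}\le C\big(\|\rho\dot{\mathbf u}\|_{L^q}+\|\nabla P(Z)\|_{L^q}+(\text{lower order})\big)\le C\big(\|\rho\dot{\mathbf u}\|_{L^q}+\|\nabla Z\|_{L^q}+1\big)$, using $\|Z\|_{L^\infty}\le M$ so that $\|\nabla P(Z)\|_{L^q}\le C(M)\|\nabla Z\|_{L^q}$. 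For $\|\rho\dot{\mathbf u}\|_{L^q}$ I would interpolate: $\|\rho\dot{\mathbf u}\|_{L^q}\le C\|\rho^{1/2}\dot{\mathbf u}\|_{L^2}^{\eta}\|\nabla\dot{\mathbf u}\|_{L^2}^{1-\eta}$ (Gagliardo--Nirenberg on $\mathbb T^2$, using $\|\rho^{1/2}\dot{\mathbf u}\|_{L^2}\le C(M)$ and Poincaré), so $\int_0^T\sigma(t)\|\rho\dot{\mathbf u}\|_{L^q}^{(q+1)/q}\,dt$ (or a similar time-weighted quantity) is finite by \eqref{highestimate}. Thus $\|\nabla^2\mathbf u\|_{L^q}$ is controlled by $\|\rho\dot{\mathbf u}\|_{L^q}+\|\nabla Z\|_{L^q}+1$, and the BKM inequality yields
\begin{equation}\nonumber
\|\nabla\mathbf u\|_{L^\infty}\le C\big(\|G\|_{L^\infty}+\|\omega\|_{L^\infty}\big)\log\big(e+\|\rho\dot{\mathbf u}\|_{L^q}+\|\nabla Z\|_{L^q}+\|\nabla\rho\|_{L^q}\big)+C,
\end{equation}
where $\|G\|_{L^\infty}+\|\omega\|_{L^\infty}$ is bounded (up to the BKM log of lower-order quantities, or via \eqref{eq:brezis-wainger} and \eqref{gw}) by $C(M)(1+\|\rho\dot{\mathbf u}\|_{L^2}^{\text{power}})$, which is integrable in time thanks to Lemma~\ref{timehighorderestimate} and \eqref{uh1}.

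Putting these together, set $\Phi(t)\triangleq e+\|\nabla\rho(t)\|_{L^q}+\|\nabla Z(t)\|_{L^q}$. The differential inequality takes the form $\frac{d}{dt}\Phi\le C\,\Psi(t)\,\Phi\log\Phi + C\,\Psi(t)\,\Phi + C\,\big(\|\rho\dot{\mathbf u}\|_{L^q}+1\big)$, where $\Psi(t)=\|G\|_{L^\infty}+\|\omega\|_{L^\infty}$ (up to harmless factors). A logarithmic Gronwall lemma then gives $\log\Phi(t)\le \big(\log\Phi(0)+C\int_0^t(\|\rho\dot{\mathbf u}\|_{L^q}+1)\,ds\big)\exp\big(C\int_0^t\Psi\,ds\big)$, finite since $\int_0^T\Psi\,dt<\infty$ and $\int_0^T\|\rho\dot{\mathbf u}\|_{L^q}\,dt<\infty$ (the latter after splitting $[0,T]=[0,\sigma_0]\cup[\sigma_0,T]$ and using \eqref{highestimate} on the second piece, plus the local theory of Lemma~\ref{lem:local-existence} on the first). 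This bounds $\|\nabla\rho\|_{L^q}+\|\nabla Z\|_{L^q}$; the $H^1$ bound for $\mathbf u$ is already in \eqref{uh1}. The main obstacle is the careful bookkeeping of powers of $R_T$ and the time-weights $\sigma(t)$ near $t=0$: one must ensure the time-integrals of $\|\rho\dot{\mathbf u}\|_{L^q}$ and of the quantity inside the BKM logarithm are genuinely finite under the regularity \eqref{eq:local-regularity} guaranteed only by the local theory, which is why the splitting at a small time $\sigma_0$ and the use of the stronger initial regularity $(\rho_0,Z_0)\in H^2$, $\mathbf u_0\in H^2$ (reduced to $W^{1,q}$, $H^1$ only at the very end by an approximation/limiting argument) is essential.
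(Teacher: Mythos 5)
Your proposal is correct and follows essentially the same route as the paper: the transport equation for $\nabla\rho$ and $\nabla Z$, the effective-viscous-flux and elliptic $L^q$-estimate \eqref{11.07} to convert $\|\nabla^2\mathbf{u}\|_{L^q}$ into $\|\rho\dot{\mathbf{u}}\|_{L^q}+\|\nabla Z\|_{L^q}+\|\nabla\rho\|_{L^q}$, the Beale--Kato--Majda inequality combined with the Gagliardo--Nirenberg bound for $\|G\|_{L^\infty}+\|\omega\|_{L^\infty}$ to produce a logarithmic differential inequality, and the time-weighted integrability of $\|\rho\dot{\mathbf{u}}\|_{L^q}$ from Lemma \ref{timehighorderestimate} to close by a log-Gronwall argument. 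The only cosmetic differences are that the paper integrates $\|\rho\dot{\mathbf{u}}\|_{L^q}^{1+1/q}$ directly over $(0,T)$ using the $\sigma(t)$-weight inside Young's inequality rather than explicitly splitting $[0,T]$ at a small time, and it writes the Gronwall step in the compact form $(\log f)'\le Cg\log f$.
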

\begin{proof}
For $r\in[2,q]$, note that~$|\nabla\rho|^r$~satisfies
\begin{equation}\begin{aligned}\label{10.8}
&(|\nabla\rho|^r)_t+\text{div}(|\nabla\rho|^r \mathbf{u})+(r-1)|\nabla\rho|^r\text{div}\mathbf{u}+r|\nabla\rho|^{r-2}(\nabla\rho)^{T}\nabla \mathbf{u}(\nabla\rho)\\
&+r\rho|\nabla\rho|^{r-2}\nabla\rho\cdot{\nabla\text{div}\mathbf{u}}=0.
\end{aligned}\end{equation}
Integrating the resulting equation over~$\mathbb{T}^2$, we obtain after integration by parts that
\begin{equation}\begin{aligned}\label{88.2}
\frac{d}{dt}\|\nabla\rho\|_{L^q}\leq C\|\nabla \mathbf{u}\|_{L^\infty}\|\nabla\rho\|_{L^r}+C\|\nabla^2\mathbf{u}\|_{L^r}.
\end{aligned}\end{equation}
Similarly, $Z$ also satisfies
\begin{equation}\begin{aligned}\label{1129}
\frac{d}{dt}\|\nabla Z\|_{L^r}\leq& C\|\nabla \mathbf{u}\|_{L^\infty}\|\nabla Z\|_{L^r}+C\|\nabla^2 \mathbf{u}\|_{L^r}.\\
\end{aligned}\end{equation}
In fact, the standard $L^p$- estimate for elliptic system yields
\begin{equation}\begin{aligned}\label{11.07}
\|\nabla^2 \mathbf{u}\|_{L^q}\leq& C\Big(\|\nabla\text{div}\mathbf{u}\|_{L^q}+\|\nabla w\|_{L^q}\Big)\\
\leq&C\left(\|\nabla \Big(\frac{G+P(Z)-\overline{P(Z)}}{2\mu+\lambda}\Big)\|_{L^q}+\|\nabla w\|_{L^q}\right)\\
\leq&C\Big(\|\nabla G\|_{L^q}+\|\nabla w\|_{L^q}+\|\nabla Z\|_{L^r}+\|G\|_{L^\infty}\|\nabla\rho\|_{L^q}+\|\nabla\rho\|_{L^q}\Big)\\
\leq&C\Big(\|\rho\dot{\mathbf{u}}\|_{L^r}+\|\nabla Z\|_{L^q}\Big)+C\Big(\|G\|_{L^\infty}+1\Big)\|\nabla\rho\|_{L^q},\\
\end{aligned}\end{equation}
where we have used \eqref{gw}.
It follows from \eqref{eq:poincare-sobolev}, \eqref{eq:poincare-type}, \eqref{estimateoft} that
\begin{eqnarray}\label{2341}
&&\|\rho\dot{\mathbf{u}}\|_{L^q}\leq C\|\rho\dot{\mathbf{u}}\|^{\frac{2(q-1)}{q^2-2}}_{L^2}\|\rho\dot{\mathbf{u}}\|^{\frac{q(q-2)}{q^2-2}}_{L^{q^2}}\nonumber\\
&\leq&C\|\rho\dot{\mathbf{u}}\|^{\frac{2(q-1)}{q^2-2}}_{L^2}\|\dot{\mathbf{u}}\|^{\frac{q(q-2)}{q^2-2}}_{L^{q^2}}\nonumber\\
&\leq&C\|\rho\dot{\mathbf{u}}\|^{\frac{2(q-1)}{q^2-2}}_{L^2}\|\dot{\mathbf{u}}\|^{\frac{q(q-2)}{q^2-2}}_{H^1}\\
&\leq&C\|\rho\dot{\mathbf{u}}\|^{\frac{2(q-1)}{q^2-2}}_{L^2}\left(\|\sqrt{\rho}\dot{\mathbf{u}}\|^{\frac{q(q-2)}{q^2-2}}_{L^2}+\|\nabla\dot{\mathbf{u}}\|^{\frac{q(q-2)}{q^2-2}}_{L^2}\right)\nonumber\\
&\leq&C\|\rho\dot{\mathbf{u}}\|_{L^2}+C\|\rho\dot{\mathbf{u}}\|^{\frac{2(q-1)}{q^2-2}}_{L^2}\|\nabla\dot{\mathbf{u}}\|^{\frac{q(q-2)}{q^2-2}}_{L^2}\nonumber\\
\end{eqnarray}
Using the inequality above together with \eqref{highestimate}, we have
\begin{equation}\begin{aligned}\label{rhodout}
\int_{0}^{T}&\Big(\|\rho\dot{\mathbf{u}}\|^{1+\frac{1}{q}}_{L^q}+t\|\dot{\mathbf{u}}\|^2_{H^1}\Big)dt\\
\leq&C\int_{0}^{T}\Big(\|\sqrt{\rho}\dot{\mathbf{u}}\|^2_{L^2}+t\|\nabla\dot{\mathbf{u}}\|^2_{L^2}+t^{-1+\frac{2}{p^3-p^2-2p+2}}\Big)dt\\
\leq& C.
\end{aligned}\end{equation}
Next, one gets from the Gagliardo-Nirenberg inequality, \eqref{estimateoft} that
\begin{equation}\begin{aligned}\label{11.16}
&\|\text{div}\mathbf{u}\|_{L^\infty}+\|\omega\|_{L^\infty}+\|G \|_{L^\infty}\\
\leq& C(\|G\|_{L^\infty}+\|P\|_{L^\infty})+C\|\omega\|_{L^\infty}\\
\leq&C\left(\|G\|^{\frac{q-2}{2(q-1)}}_{L^2}\|\nabla G\|^{\frac{q}{2(q-1)}}_{L^q}+\|\omega\|^{\frac{q-2}{2(q-1)}}_{L^2}\|\nabla
w\|^{\frac{q}{2(q-1)}}_{L^q}+1\right)\\
\leq&C\left(1+\|\rho\dot{\mathbf{u}}\|^{\frac{q}{2(q-1)}}_{L^q}\right)\\
\end{aligned}\end{equation}
which together with Lemma \ref{lem:BKM}, \eqref{11.16}, \eqref{11.07} yields that
\begin{equation}\begin{aligned}\label{345}
\|\nabla \mathbf{u}\|_{L^\infty}\leq& C(\|\text{div}\mathbf{u}\|_{L^\infty}+\|\text{curl} \mathbf{u}\|_{L^\infty})\log(e+\|\nabla^2 \mathbf{u}\|_{L^q})+C\|\nabla \mathbf{u}\|_{L^2}+C\\
\leq&
C\left(1+\|\rho\dot{\mathbf{u}}\|^{\frac{q}{2(q-1)}}_{L^q}\right)\log\left(e+\|\rho\dot{\mathbf{u}}\|_{L^q}+\|\nabla
Z\|_{L^q}+\|\nabla
\rho\|_{L^q}\right)+C.\\
\leq&C\Big(1+\|\rho\dot{\mathbf{u}}\|_{L^q}\Big)\log\Big(e+\|\nabla
Z\|_{L^q}+\|\nabla
\rho\|_{L^q}\Big).
\end{aligned}\end{equation}
Set r=q in \eqref{88.2}, \eqref{1129}, we get
\begin{equation}\begin{aligned}\label{1128}
\frac{d}{dt}\Big(\|\nabla\rho\|_{L^q}+\|\nabla Z\|_{L^q}\Big)\leq& C(1+\|\rho\dot{\mathbf{u}}\|_{L^q})\log\Big(e+\|\nabla Z\|_{L^q}+\|\nabla\rho\|_{L^q}\Big)\\
&\Big(\|\nabla\rho\|_{L^{q}}+\|\nabla Z\|_{L^{q}}\Big).
\end{aligned}\end{equation}

Let
$$
f(t)\triangleq e+\|\nabla\rho\|_{L^q}+\|\nabla Z\|_{L^q},\,\,g(t)\triangleq 1+\|\rho\dot{\mathbf{u}}\|_{L^q},
$$
which yields that
$$
(\log f(t))'\leq Cg(t)\log f(t).
$$
Thus, it follows from  Gronwall's inequality \eqref{rhodout} that
\begin{equation}\begin{aligned}\label{390}
\sup_{0\leq t\leq T}(\|\nabla\rho\|_{L^q}+\|\nabla Z\|_{L^q})\leq C,
\end{aligned}\end{equation}
which combining with \eqref{345}, \eqref{390} gives that
\begin{equation}\begin{aligned}\label{391}
&\int_{0}^{T}\|\nabla \mathbf{u}\|_{L^\infty}dt\leq C\int_{0}^{T}(\|\rho\dot{\mathbf{u}}\|_{L^q}+1)dt\leq C.
\end{aligned}\end{equation}
Taking $r=2,$ it thus follows from \eqref{88.2}, \eqref{1129}, \eqref{11.07}, \eqref{391}, that
\begin{equation}\begin{aligned}\label{88.1225}
\sup_{0\leq t\leq T}(\|\nabla\rho\|_{L^2}+\|\nabla Z\|_{L^2})\leq C.
\end{aligned}\end{equation}
According to~\eqref{estimateoft},~\eqref{88.1225}, we get
\begin{equation}
\begin{aligned}\label{nabla2u}
\|\nabla^{2}\mathbf{u}\|_{L^{2}} &\leq C\|\nabla\omega\|_{L^{2}}+C\|\nabla\mathrm{div}\mathbf{u}\|_{L^{2}} \\
&\leq C\|\nabla\omega\|_{L^{2}}+C+C\|\nabla G\|_{L^{2}}+C\|\mathrm{div}\mathbf{u}\|_{L^{2q/(q-2)}}\|\nabla\rho\|_{L^{q}} \\
&\leq C\|\nabla\omega\|_{L^{2}}+C\|\nabla G\|_{L^{2}}+C+C\|\nabla \mathbf{u}\|_{L^{2}}^{(q-2)/q}\|\nabla^{2}\mathbf{u}\|_{L^{2}}^{2/q} \\
&\leq C+\frac{1}{2}\|\nabla^{2}\mathbf{u}\|_{L^{2}}+C\|\rho\dot{\mathbf{u}}\|_{L^{2}},
\end{aligned}
\end{equation}
then 
\begin{equation}
\begin{aligned}
\|\nabla^2\mathbf{u}\|_{L^2}\leq C.
\end{aligned}
\end{equation}
This completes the proof of Lemma \ref{rhohigh}.
\end{proof}
\subsection{Proof of Theorem \ref{th1}}
\begin{prop}\label{prop5.1}
    Assume that \eqref{beta43} holds and that $(\rho_{0}, Z_0, m_{0})$ satisfies \eqref{eq:initial-regularity}. Then there exists a unique strong solution $(\rho, \mathbf{u}, Z)$ to \eqref{trans1}
    in $\Omega \times (0, \infty)$ satisfying \eqref{eq:local-regularity} for any $T \in (0, \infty)$. 
    In addition, for any $q > 2$, $(\rho, Z, \mathbf{u})$ satisfies \eqref{rhozhihg} with some positive constant 
    $C$ depending only on $T, q, \mu, \gamma, \beta$, $\|\mathbf{u}_{0}\|_{H^{1}}$, and $\|\rho_{0}\|_{W^{1,q}},\|Z_{0}\|_{W^{1,q}}$. 
    Moreover, if \eqref{beta23} holds, there exists some positive constant $C$ depending only 
    on $\mu, \beta, \gamma$, $\|\rho_{0}\|_{L^{\infty}},\|Z_{0}\|_{L^{\infty}}$, and $\|\mathbf{u}_{0}\|_{H^{1}}$ such that 
    \eqref{independestimate} and \eqref{nablau} hold.
\end{prop}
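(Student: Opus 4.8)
The plan is to derive Proposition \ref{prop5.1} by the classical continuation argument: local existence, then the blow-up criterion, then the a priori bounds accumulated above. First I would apply Lemma \ref{lem:local-existence} to obtain, from data satisfying \eqref{eq:initial-regularity}, a unique local strong solution $(\rho,\mathbf{u},Z)$ on $\Omega\times(0,T_0)$ with the regularity class \eqref{eq:local-regularity} and the strictly positive lower bounds \eqref{eq:lower-bound}, and let $T^{*}$ be its maximal time of existence. Since $\rho$ stays bounded below away from zero on $[0,T^{*})$, the entropy equation decouples into \eqref{trans2}, the quantity $\theta=\mathcal{T}(s)^{1/\gamma}$ solves the pure transport equation \eqref{temperature}, hence $c_{*}\leq\theta\leq c^{*}$ and $c_{*}\rho\leq Z\leq c^{*}\rho$ on $[0,T^{*})$; thus everything reduces to an $L^{\infty}$ bound for $\rho$ alone.

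Next I would argue by contradiction, assuming $T^{*}<\infty$. Under \eqref{beta43}, the chain of estimates Lemma \ref{lem:Phi-bound} $\to$ Lemma \ref{cor:rho-Lp-estimate} $\to$ Lemma \ref{basiclogestiamte} $\to$ Lemma \ref{lem:commutator_Linf} $\to$ Proposition \ref{propdependentt} produces a finite, $T^{*}$-dependent bound $\sup_{0\leq t<T^{*}}(\|\rho\|_{L^{\infty}}+\|Z\|_{L^{\infty}})\leq C(T^{*})$; under the stronger assumption \eqref{beta23}, Lemma \ref{rhodependent} gives an even stronger $T^{*}$-independent bound. Either way, the blow-up criterion Theorem \ref{RELXMHD221} forces $T^{*}=\infty$, so the solution is global.

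With $\rho$ bounded on every finite interval, I would run the higher-order estimates: Lemma \ref{timehighorderestimate} supplies the Hoff-type time-weighted control of $\sqrt{\rho}\dot{\mathbf{u}}$ and the bound \eqref{nablau} on $\sigma^{1/2}\|\nabla\mathbf{u}\|_{L^p}$, and Lemma \ref{rhohigh} propagates the $W^{1,q}$ estimate \eqref{rhozhihg} for $(\rho,Z)$ together with $\|\mathbf{u}\|_{H^1}$. Feeding these back into the momentum and transport equations and performing the standard parabolic--elliptic bootstrap --- taking care that the constants depend only on $T$ and the data norms ($\|\mathbf{u}_0\|_{H^1}$, $\|\rho_0\|_{W^{1,q}}$, $\|Z_0\|_{W^{1,q}}$, and for the uniform statement also $\|\rho_0\|_{L^\infty}$, $\|Z_0\|_{L^\infty}$), and not on $\inf\rho_0$ --- recovers the full regularity \eqref{eq:local-regularity} on each $[0,T]$ and confirms the solution can be continued past any finite time; uniqueness is inherited from Lemma \ref{lem:local-existence}. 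Finally, if \eqref{beta23} holds, the $T$-independent density bound of Lemma \ref{rhodependent} together with Lemma \ref{timehighorderestimate} yields \eqref{independestimate} and \eqref{nablau}.

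The main obstacle is concentrated in the second step --- the finite-time $L^{\infty}$ bound for $\rho$ that triggers the blow-up criterion --- since this is the only place where the specific structure of the entropy-transport model is genuinely exploited: the equivalence $Z\sim\rho$ turns the pressure into an effective damping term for $\rho$, the truncated function $f$ of Lemma \ref{lem:Phi-bound} lets $|\log\rho|$ be absorbed either into regular terms (small density) or into $\rho^{\beta}$ (large density), and the $L^{\infty}$ control of the commutator $F$ from Lemma \ref{lem:commutator_Linf} (resp. Lemma \ref{commutator estimate} in the time-independent regime) feeds into a Zlotnik-type ODE that closes the bound --- but only when $\beta>4/3$ (resp. $\beta>3/2$). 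Everything downstream is the routine continuation and bootstrap machinery.
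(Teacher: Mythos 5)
Your proposal captures the right overall strategy (local existence from Lemma~\ref{lem:local-existence}, continuation via an $L^\infty$ bound on $\rho$, then higher-order propagation), and you correctly identify the chain of lemmas that produces the time-dependent $\|\rho\|_{L^\infty}$ bound. However, you take a genuinely different route at the continuation step and gloss over the part of the proof that actually carries the weight.

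The paper does \emph{not} invoke the blow-up criterion Theorem~\ref{RELXMHD221} inside Proposition~\ref{prop5.1}. Instead, it defines $T^*$ directly as the supremum of times up to which the $H^2$-norm of $(\rho,\mathbf{u},Z)$ stays finite (see~\eqref{eq:5.1}), and then shows by explicit estimates that if $T^*<\infty$ this $H^2$-norm actually stays bounded on $[0,T^*)$, contradicting maximality. This avoids any concern about circularity: Theorem~\ref{RELXMHD221} is stated for ``the unique strong solution presented in Theorem~\ref{th1},'' which is precisely what Proposition~\ref{prop5.1} is establishing, so invoking it here is at best delicate and at worst circular. The paper's self-contained $H^2$-continuation is the cleaner mechanism.

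More importantly, your proposal compresses the verification of~\eqref{eq:5.2} into ``standard parabolic--elliptic bootstrap,'' but this is the bulk of the paper's proof. It requires: (i) a strictly positive lower bound for $\rho$ and $Z$ on $[0,T]$ via the flow-map exponential estimate (see~\eqref{rholow},~\eqref{zlow}) --- which is why the constant $\hat{C}$ in~\eqref{eq:5.2} \emph{does} depend on $\inf\rho_0$ and $T^*$, contrary to your statement that the regularity is recovered ``not on $\inf\rho_0$''; (ii) a Hoff estimate \emph{without} the time weight $\sigma(t)$, which is only possible because the lower bound on $\rho_0$ makes the initial material derivative~\eqref{5.5} well-defined in $L^2$; (iii) an evolution inequality for $\|\nabla^2\rho\|_{L^2}+\|\nabla^2 Z\|_{L^2}$ obtained by differentiating the transport equations twice, coupled with the elliptic estimate for $\|\nabla^3\mathbf{u}\|_{L^2}$ and the bound on $\|\nabla^2 P(Z)\|_{L^2}$ (which uses both upper and lower bounds for $Z$); and (iv) Gronwall. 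Your remark about independence from $\inf\rho_0$ applies only to the final $W^{1,q}$ estimate~\eqref{rhozhihg} and the uniform bounds~\eqref{independestimate},~\eqref{nablau}, not to the continuation constant $\hat{C}$. The distinction matters: the $H^2$ bound is used only to extend the local solution, while the $W^{1,q}$ and lower-order bounds survive the vacuum limit taken afterwards in the proof of Theorem~\ref{th1}.
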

\begin{proof}
By Lemma \ref{lem:local-existence}, there exists a local strong solution $(\rho,\mathbf{u}, Z)$ to the system \eqref{trans1} on $\Omega\times(0,T_0]$ for some $T_0>0$ depending on $\inf_{x\in\mathbb{T}^2}(\rho_0(x),Z_0(x))$, and satisfying \eqref{eq:local-regularity} and \eqref{eq:lower-bound}. Define
\begin{equation}\label{eq:5.1}
T^* \triangleq \sup\left\{T \;\bigg|\; \sup_{0\le t\le T}\|(\rho,\mathbf{u},Z)\|_{H^2}<\infty\right\}.
\end{equation}
Clearly, $T^*\ge T_0$. If $T^*<\infty$, we claim that there exists a constant $\hat{C}>0$, possibly depending on $T^*$ and $\inf_{x\in\mathbb{T}^2}(\rho_0(x),Z_0(x))$, such that
\begin{equation}\label{eq:5.2}
\sup_{0\le t\le T}\|(\rho,Z)\|_{H^2} \le \hat{C} \quad \text{for all } 0<T<T^*.
\end{equation}
This, together with \eqref{rhozhihg}, would contradict the definition \eqref{eq:5.1}. Hence we must have $T^*=\infty$.

The estimates \eqref{rhozhihg}, \eqref{nablau} and \eqref{independestimate} follow directly from \eqref{eq:local-regularity}, Lemma \ref{timehighorderestimate} and Lemma \ref{rhohigh}.

It remains to verify \eqref{eq:5.2}. 

By standard arguments combined with Lemma \ref{rhohigh}, we obtain that for any $T \in (0, T^*)$,
\begin{equation}
\begin{aligned}\label{rholow}
\inf_{(x,t)\in\mathbb{T}^2\times(0,T)}\rho(\mathbf{x},t) &\geq \inf_{(x,t)\in\mathbb{T}^2\times(0,T)}\rho_0 \exp\left( -\int_0^T \norm{\div\mathbf{\mathbf{u}}(s)}_{L^\infty} \, ds \right)\\
&\geq \inf_{(x,t)\in\mathbb{T}^2\times(0,T)}\rho_0 \exp(-C T^{*}) \geq \hat{C}^{-1}
\end{aligned}
\end{equation}
for all $(\mathbf{x},t) \in [0, T^{*}) \times \Omega$. Similarly, we have
\begin{equation}
\begin{aligned}\label{zlow}
\inf_{(x,t)\in\mathbb{T}^2\times(0,T)}Z(\mathbf{x},t) \geq \inf_{(x,t)\in\mathbb{T}^2\times(0,T)}Z_0 \exp(-C T^{*}) \geq \hat{C'}^{-1}
\end{aligned}
\end{equation}
for all $(\mathbf{x},t) \in [0, T^{*}) \times \Omega$. 

Because of \eqref{eq:initial-regularity}, we define the initial material derivative as:
\begin{equation}\label{5.5}
\sqrt{\rho} \dot{\mathbf{u}}(x,t=0) = \rho_0^{-1/2} \Big( \mu \Delta \mathbf{u}_0 + \nabla\big((\mu + \lambda(\rho_0))\operatorname{div} \mathbf{u}_0\big) - \nabla P(Z_0) \Big). 
\end{equation}
Integrating \eqref{na} with respect to $t$ over $(0,T)$ and using \eqref{eq:initial-regularity}, \eqref{independestimate}, and \eqref{rholow}, \eqref{zlow}, we obtain
\begin{equation}\label{5.6}
\sup_{0 \le t \le T} \int \rho |\dot{\mathbf{u}}|^2 \, dx + \int_0^T \|\nabla \dot{\mathbf{u}}\|_{L^2}^2 \, dt \le \hat{C}.
\end{equation}
Now, combining \eqref{gwexpression}, \eqref{nabla2u}, \eqref{rhozhihg}, and \eqref{5.5}, we have:
\begin{equation}
\begin{aligned}\label{secondestimate}
& \sup_{0 \le t \le T} \Big( \|\nabla^2 \mathbf{u}\|_{L^2} + \|\nabla G\|_{L^2} + \|\nabla \omega\|_{L^2} \Big) 
+ \int_0^T \Big( \|\nabla^2 G\|_{L^2}^2 + \|\nabla^2 \omega\|_{L^2}^2 \Big) dt \\
&\quad \le \hat{C} \sup_{0 \le t \le T} \|\rho \dot{\mathbf{u}}\|_{L^2} + \hat{C} \int_0^T \|\nabla(\rho \dot{\mathbf{u}})\|_{L^2}^2 \, dt \\
&\quad \le \hat{C} + \hat{C} \int_0^T \Big( \|\nabla \rho\|_{L^4}^2 \|\dot{\mathbf{u}}\|_{L^{2q/(q-2)}}^2 + \|\nabla \dot{u}\|_{L^2}^2 \Big) dt \\
&\quad \le \hat{C} + \hat{C} \int_0^T \|\dot{\mathbf{u}}\|_{H^1}^2 \, dt \le \hat{C}, 
\end{aligned}
\end{equation}
where in the last inequality we have used \eqref{5.6}.

We begin by applying the  operator $\partial_j \partial_i$  to the equation $\eqref{trans1}_3$
\begin{equation}
\begin{aligned}
&\partial_j \partial_i Z_t + \partial_j \partial_i u \cdot \nabla Z + \mathbf{u} \cdot \nabla \partial_j \partial_i Z + \partial_j \mathbf{u} \cdot \nabla \partial_i Z + \partial_i \mathbf{u} \cdot \nabla \partial_j Z \\
&+ \partial_j \partial_i Z \div x(\mathbf{u}) + Z \partial_j \partial_i \div x(\mathbf{u}) + \partial_i Z \partial_j \div x(\mathbf{u}) + \partial_j Z \partial_i \div x(\mathbf{u}) = 0.
\end{aligned}
\end{equation}
Multiplying the resulting equation by 2$\partial_j \partial_i Z$
, integrating over $\Omega$, and applying integration by parts yields
\begin{equation}
\begin{aligned}
\frac{\mathrm{d}}{\mathrm{d}t} \int_{\Omega} |\partial_j \partial_i Z|^2 \,\mathrm{d}x 
&= - \int_{\Omega} |\partial_j \partial_i Z|^2 \, \mathrm{div}_x(\boldsymbol{\mathbf{u}}) \,\mathrm{d}x 
   - 2 \int_{\Omega} \partial_j \partial_i Z \, \partial_i \boldsymbol{\mathbf{u}} \cdot \nabla_x \partial_j Z \,\mathrm{d}x \\
&\quad - 2 \int_{\Omega} \partial_j \partial_i Z \, \partial_j \boldsymbol{\mathbf{u}} \cdot \nabla_x \partial_i Z \,\mathrm{d}x 
   - 2 \int_{\Omega} Z \, \partial_j \partial_i Z \, \partial_j \partial_i \mathrm{div}_x(\boldsymbol{\mathbf{u}}) \,\mathrm{d}x \\
&\quad - 2 \int_{\Omega} \partial_i Z \, \partial_j \partial_i Z \, \partial_j \mathrm{div}_x(\boldsymbol{\mathbf{u}}) \,\mathrm{d}x 
   - 2 \int_{\Omega} \partial_j Z \, \partial_j \partial_i Z \, \partial_i \mathrm{div}_x(\boldsymbol{\mathbf{u}}) \,\mathrm{d}x \\
&\quad - 2 \int_{\Omega} \partial_j \partial_i Z \, \partial_j \partial_i \boldsymbol{\mathbf{u}} \cdot \nabla_x Z \,\mathrm{d}x \\
&\leq \hat{C}\|\nabla \mathbf{u}\|_{L^\infty}\|\nabla^2 Z\|^2_{L^2}+\hat{C}\|\nabla^2 Z\|_{L^2}\||\nabla Z||\nabla^2\mathbf{u}|\|_{L^2}\\
&\quad+\hat{C}\|\nabla^2 Z\|_{L^2}\|Z\|_{L^\infty}\|\nabla^3\mathbf{u}\|_{L^2},
\end{aligned}
\end{equation}
Based on the analogous treatment for the density, we can obtain
\begin{equation}
\begin{aligned}
\frac{d}{dt}\Big(\|\nabla^2\rho\|_{L^2}+\|\nabla^2Z\|_{L^2}\Big)\leq& \hat{C}(1+\|\nabla \mathbf{u}\|_{L^\infty})(\|\nabla^2\rho\|_{L^2}+\|\nabla^2Z\|_{L^2})\\
&+(\||\nabla Z||\nabla^2\mathbf{u}|\|_{L^2}+\||\nabla \rho||\nabla^2\mathbf{u}|\|_{L^2})+C\|\nabla^3\mathbf{u}\|_{L^2}.
\end{aligned}
\end{equation}
An application of the $L^2$-elliptic estimate yields
\begin{equation}
\begin{aligned}
\|\nabla^3 \mathbf{u}\|_{L^2} 
&\leq \hat{C} \|\nabla^2 \operatorname{div} \mathbf{u}\|_{L^2} + \hat{C} \|\nabla^2 \omega\|_{L^2} \\
&\leq \hat{C} \|\nabla^2 ((2\mu + \lambda(\rho)) \operatorname{div} \mathbf{u})\|_{L^2} 
   + \hat{C} \|\nabla \rho \cdot \nabla^2 \mathbf{u}\|_{L^2} 
   + \hat{C} \|\nabla^2 \rho \cdot \nabla \mathbf{u}\|_{L^2} \\
&\quad +  \hat{C} \|\nabla \rho\|_{L^2}^2 \|\nabla \mathbf{u}\|_{L^2} 
   + \hat{C} \|\nabla^2 \omega\|_{L^2} \\
&\leq   \hat{C}\|\nabla^2 G\|_{L^2} +  \hat{C}\|\nabla^2 P(Z)\|_{L^2} +  \hat{C}\|\nabla^2 \omega\|_{L^2} \\
&\quad +  \hat{C}\|\nabla \mathbf{u}\|_{L^\infty} \bigl( \|\nabla^2 \rho\|_{L^2} + \|\nabla \rho\|_{L^4} \bigr) \\
&\quad +  \hat{C}\|\nabla \rho\|_{L^q} \|\nabla^2 \mathbf{u}\|_{L^2}^{1-\frac{2}{q}} \|\nabla^3 \mathbf{u}\|_{L^2}^{\frac{2}{q}}\\
&\leq \hat{C}\|\nabla^2 G\|_{L^2} +  \hat{C}\|\nabla^2 Z\|_{L^2} +  \hat{C}\|\nabla^2 \omega\|_{L^2}+ \hat{C}\|\nabla \mathbf{u}\|_{L^\infty} \bigl( \|\nabla^2 \rho\|_{L^2} + \|\nabla \rho\|_{L^4} \bigr)\\
&\quad +\epsilon\|\nabla^3\mathbf{u}\|_{L^2}
\end{aligned}
\end{equation}
To estimate $\|\nabla^2 P(Z)\|_{L^2}$, we use the following bound:
\begin{equation}
\begin{aligned}
\|\nabla^2 P(Z)\|_{L^2}
&\leq \hat{C} (  \left(  \|p'(Z) \nabla_{\boldsymbol{x}}^{2} Z\|_{L^2} + \|p''(Z) \nabla_{\boldsymbol{x}} Z \otimes \nabla_{\boldsymbol{x}} Z\|_{L^2} \right) \\
&\leq \hat{C}\Big((\gamma-1)Z^{\gamma-1} \| \nabla_{\boldsymbol{x}}^{2} Z\|_{L^2}+\gamma(\gamma-1)Z^{\gamma-2}\|\nabla_{\boldsymbol{x}} Z\|_{L^4}^{2}\Big),
\end{aligned}
\end{equation}
 where the presence of the coefficients $Z^{\gamma-1}$ and $Z^{\gamma-2}$ (with $\gamma > 1$) forces us to establish uniform upper and lower bounds for $Z$.
 
From the estimates above, we get
\begin{equation}
\begin{aligned}
\frac{d}{dt}(\|\nabla^2\rho\|_{L^2}+\|\nabla^2Z\|_{L^2})\leq& \hat{C}(1+\|\nabla u\|_{L^\infty})(\|\nabla^2\rho\|_{L^2}+\|\nabla^2Z\|_{L^2})\\
&+ \hat{C}\|\nabla^2 G\|_{L^2} +  \hat{C}\|\nabla^2 \omega\|_{L^2}.
\end{aligned}
\end{equation}
By Gronwall's inequality and \eqref{secondestimate}, we obtain
\begin{equation}
\begin{aligned}
\sup_{0\leq t\leq T}(\|\nabla^2\rho\|^2_{L^2}+\|\nabla^2Z\|^2_{L^2})\leq \hat{C}.
\end{aligned}
\end{equation}
\end{proof}
\noindent\textbf{Proof of Theorem \ref{th1}}
Let $(\rho_0, m_0,Z_0)$ satisfy the assumption \eqref{ia} in Theorem \ref{th1}, 
For $(\delta,\eta)\in(0,1),$ we define
\begin{equation}
\begin{aligned}\nonumber
\rho_{0}^{\delta,\eta} \triangleq j_{\delta} * \rho_{0} + \eta \geq \eta > 0, \quad \mathbf{u}_{0}^{\delta,\eta} \triangleq j_{\delta} * \mathbf{u}_{0}, \quad Z_{0}^{\delta,\eta} \triangleq j_{\delta} * Z_{0} + A_0^{\delta}\eta\geq A_0^{\delta}\eta > 0,
\end{aligned}
\end{equation}
and 
\begin{equation}
\begin{aligned}\nonumber
m_0^{\delta,\eta}=\rho_{0}^{\delta,\eta}\mathbf{u}_{0}^{\delta,\eta},
\end{aligned}
\end{equation}
where  $j_\delta$ is the standard mollifying of width $\delta.$

By construction, we have $\rho_0^{\delta,\eta}, \mathbf{u}_0^{\delta,\eta},\,Z^{\delta,\eta} \in H^\infty$ and
\begin{equation}
\lim_{\delta+\eta \to 0} \Big( \|\rho_0^{\delta,\eta} - \rho_0\|_{W^{1,q}} + \|\mathbf{u}_0^{\delta,\eta} - \mathbf{u}_0\|_{H^1} + \|Z_0^{\delta,\eta} - Z_0\|_{W^{1,q}}\Big) = 0.
\end{equation}

We now verify that the approximate function $\theta_0$ obtained from the approximations of $\rho_0^{\delta,\eta}$ and $Z_0^{\delta,\eta}$ is indeed admissible. All previous a priori estimates assume $\theta_0$ lies in the fixed interval $[c_*, c^*]$, and the estimates in Section \ref{sec-4} depend explicitly on $c_*$ and $c^*$. We now show that $\theta$ remains uniformly bounded in $[c_{*}, c^{*}]$:
\begin{equation}
\begin{aligned}\label{appro}
 c_*j_{\delta} * \rho_{0}=c_* \int_{\mathbb{R}^3} \rho(t, y) j_{\delta}(x - y) \, dy &\leq j_{\delta} * Z_{0}= \int_{\mathbb{R}^3} Z(t, y) j_{\delta}(x - y) \, dy \\
&\le c^* \int_{\mathbb{R}^3} \rho(t, y) j_{\delta}(x - y) \, dy =c^*j_{\delta} * \rho_{0},
\end{aligned}
\end{equation}
For $\rho_0 > 0$ \begin{equation}
\begin{aligned}
c_{*}=\frac{c_{*}j_{\delta} * \rho_{0} + c_{*}\eta}{j_{\delta} * \rho_{0} + \eta}\leq \frac{Z_{0}^{\delta,\eta}}{\rho_{0}^{\delta,\eta}}=\frac{ j_{\delta} * Z_{0} + A_0^{\delta}\eta}{ j_{\delta} * \rho_{0} + \eta}\leq \frac{c^{*}j_{\delta} * \rho_{0} + c^{*}\eta}{j_{\delta} * \rho_{0} + \eta}= c^*
\end{aligned}
\end{equation}
For $\rho_0 = 0$, we have $Z_0 = 0$ and then we obtain
$$\frac{ j_{\delta} * Z_{0} + A^{\delta}_0\eta}{ j_{\delta} * \rho_{0} + \eta}=A^{\delta}_0.$$ Hence, in the vacuum region, we naturally define $\theta_0 = A_0.$ 

Through the approximate construction described above, the function $\theta_0$ can be properly defined as 
\[
\theta_0 = 
\begin{cases} 
\displaystyle \frac{Z_0}{\rho_0}, & \rho_0 > 0,\\[10pt]
A_0, & \rho_0 = 0,
\end{cases}
\]
where $A_0$ is taken such that it satisfies equation \eqref{A} and the estimate 
$$c_*\leq A_0\leq c^*,$$holds. 
Consequently, we have $c_{*} \leq \theta_0 \leq c^{*}$ almost everywhere in $(0, T) \times \Omega$.

Applying Proposition~\ref{prop5.1} with the data $(\rho_0^{\delta,\eta}, m_0^{\delta,\eta}, Z_0^{\delta,\eta})$ yields a unique global strong solution  
$(\rho^{\delta,\eta}, \mathbf{u}^{\delta,\eta}, Z^{\delta,\eta})$ of \eqref{trans1} on $\Omega \times (0,\infty)$. Moreover, for every $T>0$ the solution satisfies the uniform estimate \eqref{rhozhihg} with a constant $C$ independent of $\delta$ and $\eta$.    
If \eqref{beta23} is satisfied, then the uniform‑in‑time bounds \eqref{independestimate} and \eqref{nablau} are also valid with a constant $C$ depending only on $\mu,\beta,\gamma,\|\rho_0\|_{L^{\infty}},\,\|Z_0\|_{L^{\infty}},\,\|\mathbf{u}_0\|_{H^{1}}$.

Passing successively to the limits $\delta\to0$ and then $\eta\to0$ via standard compactness arguments (see \cite{M. Perepelitsa,VaigantKazhikhov,J. Li}), we obtain a global strong solution $(\rho,u,Z)$ of the original problem.  
This solution satisfies all the regularity properties stated in Theorem~\ref{th1} except for the long-time behavior \eqref{eq:long-time} and the uniqueness statement concerning solutions fulfilling \eqref{theoremregularity}.  
Furthermore, if condition \eqref{beta23} holds, $(\rho,\mathbf{u},Z)$ also satisfies the uniform-in-time bounds \eqref{independestimate} and \eqref{nablau} with a constant $C$ depending only on $\mu,\beta,\gamma,\|\rho_0\|_{L^{\infty}}, \|Z_0\|_{L^{\infty}}$ and $\|\mathbf{u}_0\|_{H^{1}}$.

The uniqueness of solutions satisfying \eqref{theoremregularity} can be proved by a standard energy method; for details we refer to \cite{P. Germain}.  
Similarly, the proof of the asymptotic behavior \eqref{eq:long-time} follows from the arguments presented in \cite{huanglijmpa}.  
Hence the proof of Theorem~\ref{th1} is complete.

\section{Proof of Theorem \ref{th1} on Bounded Domains}\label{sec-5}

The proof of Theorem \ref{th1} on bounded domains can be found in \cite{fanlili,fanliwang}. Here, we only present the parts that differ from \cite{fanlili,fanliwang}; specifically, we provide a proof for the commutator estimates.

In the study of the compressible Navier–Stokes equations in a two-dimensional bounded domain, commutator estimates serve as a crucial tool for establishing global existence of solutions with large initial data. Unlike the approach adopted in \cite{fanlili}, where a Riemann mapping is employed to reduce a general bounded simply-connected domain to the unit disc and a pull-back Green’s function is introduced to obtain a commutator-type representation, the present work fully exploits the structural features of the Navier‑slip boundary condition together with the specific two‑dimensional geometry directly in bounded domains. This enables us to derive effective commutator estimates without relying on the technical complications associated with domain transformations.
\begin{lema}\label{commutator estimatenavierslip}
Let $\Omega\subset\mathbb{R}^{2}$ be a simply connected smooth bounded domain with the Navier-slip boundary condition. For $p, p_{1}, p_{2} \in (1,\infty)$ with
\begin{equation}
\frac{1}{p} = \frac{1}{p_{1}} + \frac{1}{p_{2}}.
\end{equation}
Define $F$ as in \eqref{fdenie}. Then there exists a constant $C = C(\Omega,p,p_1,p_2) > 0$
\begin{equation}
\begin{aligned}\label{commuatotorestiamte1}
\norm{F}_{q} &\leq \norm{\rho}_{L^{\infty}} \norm{\nabla \mathbf{u}}_{L^2}^{2},  \\
\end{aligned}
\end{equation}
and
\begin{equation}
\begin{aligned}\label{commuatotorestiamte2}
\norm{\nabla F}_{p} &\leq \norm{\rho \mathbf{u}}_{L^{p_{1}}} \norm{\nabla \mathbf{u}}_{L^{p_{2}}}.
\end{aligned}
\end{equation}
\end{lema}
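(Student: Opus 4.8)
The plan is to reduce both inequalities to the Helmholtz decomposition on $\Omega$, using the two‑dimensional geometry and the Navier–slip condition to produce two scalar potentials, and then to exploit a top‑order cancellation in $\nabla F$. Following the outline preceding the statement, I first let $f_{2}$ solve the homogeneous Neumann problem $\Delta f_{2}=\operatorname{div}(\rho\mathbf{u})$ in $\Omega$, $\partial f_{2}/\partial n=0$ on $\partial\Omega$ (solvable since $\int_{\partial\Omega}\rho\mathbf{u}\cdot n=0$ by $\mathbf{u}\cdot n=0$), and let $f_{1}$ solve $\Delta f_{1}=\operatorname{div}\operatorname{div}(\rho\mathbf{u}\otimes\mathbf{u})$ with $\partial f_{1}/\partial n=k\rho|\mathbf{u}|^{2}$, where $k$ is chosen so that $k\rho|\mathbf{u}|^{2}$ equals the normal trace $n\cdot\operatorname{div}(\rho\mathbf{u}\otimes\mathbf{u})$ on $\partial\Omega$; via $\mathbf{u}\cdot n=0$ this identifies $k$ with a multiple of the second fundamental form of $\partial\Omega$. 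I then set $F=f_{1}-\mathbf{u}\cdot\nabla f_{2}$, the bounded‑domain counterpart of \eqref{fdenie}.

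\textbf{The two‑dimensional reduction.} By the equations for $f_{1},f_{2}$ the fields $\nabla f_{1}-\operatorname{div}(\rho\mathbf{u}\otimes\mathbf{u})$ and $\nabla f_{2}-\rho\mathbf{u}$ are divergence‑free, and by the Neumann conditions together with $\mathbf{u}\cdot n=0$ they have vanishing normal trace on $\partial\Omega$; since $\Omega$ is simply connected, each is a rotated gradient $\nabla^{\perp}\psi_{j}$ with $\psi_{j}$ normalized to vanish on $\partial\Omega$. Hence $\psi_{1},\psi_{2}$ solve homogeneous Dirichlet problems, $\Delta\psi_{1}=-\nabla^{\perp}\!\cdot\operatorname{div}(\rho\mathbf{u}\otimes\mathbf{u})$ and $\Delta\psi_{2}=-\nabla^{\perp}\!\cdot(\rho\mathbf{u})=\operatorname{div}(\rho u_{2},-\rho u_{1})$, so $W^{1,p}$ elliptic regularity for the Dirichlet Laplacian with divergence‑form right‑hand side gives $\|\nabla\psi_{2}\|_{L^{p}}\le C\|\rho\mathbf{u}\|_{L^{p}}$ for $1<p<\infty$. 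Inequality \eqref{commuatotorestiamte1} now follows: from $\nabla f_{2}=\rho\mathbf{u}+\nabla^{\perp}\psi_{2}$ one gets $\|\mathbf{u}\cdot\nabla f_{2}\|_{L^{p}}\le\|\mathbf{u}\|_{L^{2p}}\|\nabla f_{2}\|_{L^{2p}}\le C\|\rho\|_{L^{\infty}}\|\mathbf{u}\|_{L^{2p}}^{2}$, while the standard $L^{p}$ bound for the Neumann Laplacian with a double‑divergence source (a zeroth‑order Calderón–Zygmund estimate) plus the trace theorem for the boundary datum give $\|f_{1}\|_{L^{p}}\le C\|\rho\mathbf{u}\otimes\mathbf{u}\|_{L^{p}}+C\|k\rho|\mathbf{u}|^{2}\|_{L^{p}(\partial\Omega)}\le C\|\rho\|_{L^{\infty}}\|\mathbf{u}\|_{L^{2p}}^{2}$; finally $\|\mathbf{u}\|_{L^{2p}}^{2}\le C\|\nabla\mathbf{u}\|_{L^{2}}^{2}$ by the $2$D Sobolev inequality together with a Poincaré‑type inequality compatible with the slip condition.

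\textbf{The cancellation for $\nabla F$.} Differentiating $F=f_{1}-\mathbf{u}\cdot\nabla f_{2}$ and inserting $\nabla f_{1}=\operatorname{div}(\rho\mathbf{u}\otimes\mathbf{u})+\nabla^{\perp}\psi_{1}$, $\nabla f_{2}=\rho\mathbf{u}+\nabla^{\perp}\psi_{2}$, and $\operatorname{div}(\rho\mathbf{u}\otimes\mathbf{u})=(\mathbf{u}\cdot\nabla)(\rho\mathbf{u})+(\rho\mathbf{u})\operatorname{div}\mathbf{u}$, the top‑order advective term $(\mathbf{u}\cdot\nabla)(\rho\mathbf{u})$ produced by $\nabla f_{1}$ cancels the one produced by $\nabla(\mathbf{u}\cdot\nabla f_{2})$. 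What remains can be written as
\[
\nabla F=\mathbf{g}+\nabla^{\perp}\Phi,\qquad \Phi:=\mathbf{u}\cdot\nabla\psi_{2}-\psi_{1},
\]
where $\mathbf{g}$ is a finite sum of products of the form $\rho\mathbf{u}\,\nabla\mathbf{u}$ and $\nabla\mathbf{u}\,\nabla^{\perp}\psi_{2}$. Since $\psi_{1}=0$ on $\partial\Omega$ while $\nabla\psi_{2}$ is normal to $\partial\Omega$ and $\mathbf{u}\cdot n=0$, we have $\Phi=0$ on $\partial\Omega$; hence $\nabla^{\perp}\Phi$ is divergence‑free with vanishing normal trace, whereas $\nabla F$ is a gradient, so the displayed identity is exactly the Helmholtz decomposition of $\mathbf{g}$ and $\nabla F=\mathbb{P}_{\mathrm{cf}}\mathbf{g}$ with $\mathbb{P}_{\mathrm{cf}}$ the curl‑free Helmholtz projection, bounded on $L^{p}$. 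Therefore $\|\nabla F\|_{L^{p}}\le C\|\mathbf{g}\|_{L^{p}}$, and Hölder's inequality together with $\|\nabla\psi_{2}\|_{L^{p_{1}}}\le C\|\rho\mathbf{u}\|_{L^{p_{1}}}$ yields $\|\mathbf{g}\|_{L^{p}}\le C\|\rho\mathbf{u}\|_{L^{p_{1}}}\|\nabla\mathbf{u}\|_{L^{p_{2}}}$, which is \eqref{commuatotorestiamte2}.

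\textbf{Main difficulty.} I expect the delicate point to be the boundary bookkeeping: one must verify rigorously that the Neumann datum for $f_{1}$ coincides with $n\cdot\operatorname{div}(\rho\mathbf{u}\otimes\mathbf{u})$ on $\partial\Omega$ — the identification of $k$ with the curvature term, via $\mathbf{u}\cdot n=0$ and a smooth extension of $n$ — so that $\nabla f_{j}-(\text{source})$ genuinely has zero normal component and simple connectedness produces $\psi_{j}$ with $\psi_{j}|_{\partial\Omega}=0$; and one must check that $\Phi=\mathbf{u}\cdot\nabla\psi_{2}-\psi_{1}$ is constant on $\partial\Omega$, which is precisely what licenses discarding $\nabla^{\perp}\Phi$ through the Helmholtz projection. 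Everything else — elliptic regularity, Hölder's inequality, and $2$D Sobolev embeddings — is routine.
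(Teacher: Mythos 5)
Your argument is correct and coincides with the paper's up through the construction of the Neumann auxiliary functions $f_1,f_2$, the stream functions $\psi_1,\psi_2$ vanishing on $\partial\Omega$, and the first cancellation yielding $\nabla F=\nabla^\perp\bigl(\psi_1-\mathbf{u}\cdot\nabla\psi_2\bigr)+R$ with $R$ a low-order combination of products of $\rho\mathbf{u}$ (or $\nabla^\perp\psi_2$) with $\nabla\mathbf{u}$. The genuine difference is in how the piece $\nabla^\perp g$, with $g:=\psi_1-\mathbf{u}\cdot\nabla\psi_2$, is discarded. The paper performs a second, quite delicate, cancellation: it computes $\Delta g$ term by term, shows $\Delta g=\operatorname{div}V$ plus lower order with $V$ controlled by $\|\rho\mathbf{u}\|_{L^{p_1}}\|\nabla\mathbf{u}\|_{L^{p_2}}$, and then applies $W^{1,p}$ elliptic regularity for the homogeneous Dirichlet problem to bound $\|\nabla g\|_{L^p}$ directly. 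You instead observe that $g|_{\partial\Omega}=0$ (which follows from $\psi_1|_{\partial\Omega}=0$, $\psi_2|_{\partial\Omega}=0$, and $\mathbf{u}\cdot n=0$) makes $\nabla^\perp g$ divergence-free with vanishing weak normal trace, hence an element of $L^p_\sigma(\Omega)$, while $\nabla F$ is a gradient and thus lies in the complementary subspace of the Helmholtz decomposition $L^p(\Omega)^2=\{\nabla h: h\in W^{1,p}\}\oplus L^p_\sigma(\Omega)$; by uniqueness of this decomposition on bounded $C^1$ domains, $\nabla F$ is exactly the curl-free projection of $R$, giving $\|\nabla F\|_{L^p}\leq C\|R\|_{L^p}$ without ever estimating $\nabla g$. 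Your route is shorter and more conceptual, trading the paper's explicit second cancellation for the $L^p$-boundedness of the Helmholtz projection on a bounded smooth domain (a classical but nontrivial theorem); the paper's route is more self-contained and makes the fine structure of $g$ explicit. Two small corrections: with your $\Phi:=\mathbf{u}\cdot\nabla\psi_2-\psi_1$ the identity reads $\nabla F=\mathbf{g}-\nabla^\perp\Phi$, not $+\nabla^\perp\Phi$ (harmless, but worth getting right); and for the Helmholtz step to apply cleanly one should remark that $\mathbf{u}\cdot\nabla\psi_2$ has enough regularity to possess a trace, which is available in the a priori estimate setting where all quantities are smooth.
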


\begin{proof}
The proof of \eqref{commuatotorestiamte1} is straightforward and will be omitted.

Define $f_1$ and $f_{2}$ as the unique solutions  of the Neumann problems
\begin{equation}
\begin{aligned}
\begin{cases}
\Delta f_1 = \mathrm{div}\,\mathrm{div} (\rho \mathbf{u} \otimes \mathbf{u}) &\quad \text{in } \Omega,\\[4pt]
\displaystyle \frac{\p f}{\p n} = k\rho|\mathbf{u}|^{2},&\quad \text{on } \partial\Omega
\end{cases}
\qquad
\begin{cases}
\Delta f_{2} = \mathrm{div}(\rho \mathbf{u}) &\quad \text{in } \Omega,\\[4pt]
\displaystyle \frac{\p f_{2}}{\p n} = 0,&\quad \text{on } \partial\Omega
\end{cases}
\end{aligned}
\end{equation}
where the constant $k$ is the curvature of the curve. Since $\Omega$ is a smooth simply connected domain in $\mathbb{R}^2$, 
we can introduce stream functions $\psi_{1}$ and $\psi_{2}$ satisfying
\begin{equation}
\begin{aligned}\label{f2psi}
\nabla f_1 - \mathrm{div} (\rho \mathbf{u} \otimes \mathbf{u}) &= \nabla^{\perp}\psi_{1}, \\
\nabla f_{2} - \rho \mathbf{u} &= \nabla^{\perp}\psi_{2},
\end{aligned}
\end{equation}
with $\nabla^{\perp}\psi_{j}\cdot n = 0$ $(j=1,2)$ on $\p\Omega$. By adding suitable constants we may impose $\psi_{j}|_{\p\Omega}=0$, $(j=1,2)$. Then $\psi_{j}$ solve the Dirichlet problems
\begin{equation}
\begin{aligned}
\begin{cases}
\Delta \psi_{1} = -\nabla^{\perp}\cdot \bigl( \mathrm{div} (\rho \mathbf{u} \otimes \mathbf{u}) \bigr), &\quad \text{in } \Omega,\\[4pt]
\psi_{1}=0,&\quad \text{on } \partial\Omega.
\end{cases}
\qquad
\begin{cases}
\Delta \psi_{2} = -\nabla^{\perp}\cdot (\rho \mathbf{u}), &\quad \text{in } \Omega,\\[4pt]
\psi_{2}=0,&\quad \text{on } \partial\Omega.
\end{cases}
\end{aligned}
\end{equation}
Standard elliptic estimate gives the bounds
\begin{equation}
\begin{aligned}
\norm{\nabla^{\perp}\psi_{1}}_{L^q} &\leq \norm{\mathrm{div} (\rho \mathbf{u} \otimes \mathbf{u})}_{L^q}, \\
\norm{\nabla^{\perp}\psi_{2}}_{L^q} &\leq \norm{\rho \mathbf{u}}_{L^q}.
\end{aligned}
\end{equation}
Since $F = f_1 - \mathbf{u}\cdot\nabla f_{2}$, we compute
\begin{equation}
\begin{aligned}
\nabla F &= \nabla f_1- \nabla (\mathbf{u}\cdot\nabla f_{2}) \\
&= \nabla f_1- \mathbf{u}\cdot\nabla (\nabla f_{2}) + \nabla \mathbf{u}\cdot\nabla f_{2} \\
&= \bigl(\mathrm{div} (\rho \mathbf{u} \otimes \mathbf{u}) + \nabla^{\perp}\psi_{1}\bigr) 
      - \mathbf{u}\cdot\nabla (\rho \mathbf{u} + \nabla^{\perp}\psi_{2}) 
      + \nabla \mathbf{u}\cdot (\rho \mathbf{u} + \nabla^{\perp}\psi_{2}) \\
&= \nabla^{\perp}\psi_{1} - \mathbf{u}\cdot\nabla (\nabla^{\perp}\psi_{2}) 
      + \nabla \mathbf{u}\cdot\nabla^{\perp}\psi_{2} \\
&\mathbf{u}ad + \rho \mathbf{u}\,\mathrm{div} \mathbf{u} + \rho \mathbf{u}\cdot{\nabla \mathbf{u}}.
\end{aligned}
\end{equation}
The terms $\rho \mathbf{u}\,\mathrm{div} \mathbf{u}$, $\nabla \mathbf{u}\cdot\nabla^{\perp}\psi_{2}$ and $\rho \mathbf{u}\cdot{\nabla \mathbf{u}}$ are all of the form whose $L^{p}$-norm is bounded by $\norm{\rho \mathbf{u}}_{L^{p_{1}}}\norm{\nabla \mathbf{u}}_{L^{p_{2}}}$ via H\"older's inequality. We denote their sum by $R^{(1)}$ and note that
\begin{equation}
\norm{R^{(1)}}_{L^p} \leq \norm{\rho \mathbf{u}}_{L^{p_{1}}}\norm{\nabla \mathbf{u}}_{L^{p_{2}}}.
\end{equation}
Thus
\begin{equation}
\nabla F = \nabla^{\perp}\psi_{1} - \mathbf{u}\cdot\nabla (\nabla^{\perp}\psi_{2}) 
          + R^{(1)}.
\end{equation}
Using the identity
\begin{equation}
\mathbf{u}\cdot\nabla (\nabla^{\perp}\psi_{2}) = \nabla^{\perp}(\mathbf{u}\cdot\nabla\psi_{2}) 
                                       - \nabla^{\perp}\mathbf{u}\cdot\nabla\psi_{2},
\end{equation}
we obtain
\begin{equation}
\begin{aligned}
\nabla F &= \nabla^{\perp}\psi_{1} - \nabla^{\perp}(\mathbf{u}\cdot\nabla\psi_{2})
          + \bigl(\nabla^{\perp}\mathbf{u}\cdot\nabla\psi_{2} 
                \bigr) + R^{(1)}.
\end{aligned}
\end{equation}
The combination $\nabla^{\perp}\mathbf{u}\cdot\nabla\psi_{2}$ is again bounded by $\norm{\rho \mathbf{u}}_{L^{p_{1}}}\norm{\nabla \mathbf{u}}_{L^{p_{2}}}$.  Denoting this term together with $R^{(1)}$ as $R^{(2)}$, we have
\begin{equation}
\begin{aligned}
\nabla F &= \nabla^{\perp}\bigl( \psi_{1} - \mathbf{u}\cdot\nabla\psi_{2} \bigr) + R^{(2)}, \\
\end{aligned}
\end{equation}
where we have 
\begin{equation}
\begin{aligned}
\norm{R^{(2)}}_{p} &\leq \norm{\rho \mathbf{u}}_{L^{p_{1}}}\norm{\nabla \mathbf{u}}_{L^{p_{2}}}.
\end{aligned}
\end{equation}
Set $g := \psi_{1} - \mathbf{u}\cdot\nabla\psi_{2}$. Then $g|_{\p\Omega}=0$ and it suffices to estimate $\nabla^{\perp}g$.
Applying the Laplacian gives
\begin{equation}
\begin{aligned}
\Delta g &= \Delta\psi_{1} - \Delta(\mathbf{u}\cdot\nabla\psi_{2}) \\
&= -\nabla^{\perp}\cdot\bigl( \mathrm{div} (\rho \mathbf{u} \otimes \mathbf{u}) \bigr) 
     - \nabla^{\perp}\cdot\nabla^{\perp}(\mathbf{u}\cdot\nabla\psi_{2}) \\
&= -\nabla^{\perp}\cdot(\rho \mathbf{u}\,\mathrm{div} \mathbf{u}) 
     - \nabla^{\perp}\cdot\bigl( \mathbf{u}\cdot\nabla (\rho \mathbf{u}) \bigr) \\
&\quad - \nabla^{\perp}\cdot\bigl( \nabla^{\perp}\mathbf{u}\cdot\nabla\psi_{2}
                               + \mathbf{u}\cdot\nabla\nabla^{\perp}\psi_{2} \bigr).
\end{aligned}
\end{equation}
We define $\nabla^{\perp}\cdot(\rho \mathbf{u}\,\mathrm{div} \mathbf{u})$ and $\nabla^{\perp}\cdot(\nabla^{\perp}\mathbf{u}\cdot\nabla\psi_{2})$ as $R^{(3)}.$ The term 
$R^{(3)}$
satisfies the estimate 
\begin{equation}
\begin{aligned}
\norm{\nabla \Delta^{-1}_0 R^{(3)}}_{L^p} &\leq \norm{\rho \mathbf{u}}_{L^{p_{1}}}\norm{\nabla \mathbf{u}}_{L^{p_{2}}}.
\end{aligned}
\end{equation}
Hence we may write
\begin{equation}
\begin{aligned}\label{deltag}
\Delta g &= -\nabla^{\perp}\cdot\bigl( \mathbf{u}\cdot\nabla (\rho \mathbf{u}) \bigr)
         - \nabla^{\perp}\cdot\bigl( \mathbf{u}\cdot\nabla\nabla^{\perp}\psi_{2} \bigr) + R^{(3)}, 
\end{aligned}
\end{equation}
where

\begin{equation}
\begin{aligned}
\norm{\nabla \Delta^{-1}_0R^{(3)}}_{L^p} &\leq \norm{\rho \mathbf{u}}_{L^{p_{1}}}\norm{\nabla \mathbf{u}}_{L^{p_{2}}}.
\end{aligned}
\end{equation}
Now we compute the second term of the above equation
\begin{equation}
\begin{aligned}
\nabla^{\perp}\cdot\bigl( \mathbf{u}\cdot\nabla\nabla^{\perp}\psi_{2} \bigr)
   &= \mathbf{u}\cdot\nabla (\nabla^{\perp}\cdot\nabla^{\perp}\psi_{2})
     + \nabla^{\perp}\mathbf{u} : \nabla\nabla^{\perp}\psi_{2}.
\end{aligned}
\end{equation}
Combining with \eqref{f2psi}$_2$, we have
\begin{equation}
\begin{aligned}\label{sec5}
\nabla^{\perp}\cdot\bigl( \mathbf{u}\cdot\nabla\nabla^{\perp}\psi_{2} \bigr)
   &= -\mathbf{u}\cdot\nabla\bigl( \nabla^{\perp}\cdot(\rho \mathbf{u}) \bigr)
     + \nabla^{\perp}\mathbf{u} : \nabla\nabla^{\perp}\psi_{2}.
\end{aligned}
\end{equation}
Substituting \eqref{sec5} into \eqref{deltag} yields
\begin{equation}
\begin{aligned}
\Delta g &= -\nabla^{\perp}\cdot\bigl( \mathbf{u}\cdot\nabla (\rho \mathbf{u}) \bigr)
          + \mathbf{u}\cdot\nabla\bigl( \nabla^{\perp}\cdot(\rho \mathbf{u}) \bigr)
          - \nabla^{\perp}\mathbf{u} : \nabla\nabla^{\perp}\psi_{2} + R^{(3)} \\
      &= -\nabla^{\perp}\mathbf{u} : \nabla (\rho \mathbf{u})
          - \nabla^{\perp}\mathbf{u} : \nabla\nabla^{\perp}\psi_{2} + R^{(3)} \\
      &= -\nabla^{\perp}\mathbf{u} : \nabla\bigl( \rho \mathbf{u} + \nabla^{\perp}\psi_{2} \bigr) + R^{(3)}.
\end{aligned}
\end{equation}
Observe that
\begin{equation}
\begin{aligned}
\nabla^{\perp}\mathbf{u} : \nabla\bigl( \rho \mathbf{u} + \nabla^{\perp}\psi_{2} \bigr)
   &= \mathrm{div}\bigl( (\rho \mathbf{u} + \nabla^{\perp}\psi_{2})\cdot\nabla^{\perp}\mathbf{u} \bigr) \\
   &\quad - (\rho \mathbf{u} + \nabla^{\perp}\psi_{2})\cdot\nabla^{\perp}\,\mathrm{div} \mathbf{u}.
\end{aligned}
\end{equation}
The first term is a total divergence. For the second term we compute
\begin{equation}
\begin{aligned}
(\rho \mathbf{u} + \nabla^{\perp}\psi_{2})\cdot\nabla^{\perp}\,\mathrm{div} \mathbf{u}
   &= \rho \mathbf{u}\cdot\nabla^{\perp}\,\mathrm{div} \mathbf{u} 
      + \nabla^{\perp}\psi_{2}\cdot\nabla^{\perp}\,\mathrm{div} \mathbf{u} \\
   &= \rho \mathbf{u}\cdot\nabla^{\perp}\,\mathrm{div} \mathbf{u} 
      + \nabla^{\perp}\cdot(\nabla^{\perp}\psi_{2}\,\mathrm{div} \mathbf{u}) 
      - (\nabla^{\perp}\cdot\nabla^{\perp}\psi_{2})\mathrm{div} \mathbf{u} \\
   &= \rho \mathbf{u}\cdot\nabla^{\perp}\,\mathrm{div} \mathbf{u} 
      + \nabla^{\perp}\cdot(\nabla^{\perp}\psi_{2}\,\mathrm{div} \mathbf{u}) 
      + \nabla^{\perp}\cdot(\rho \mathbf{u}\,\mathrm{div} \mathbf{u})\\
     &= \nabla^{\perp}\cdot(\rho \mathbf{u}\,\mathrm{div} \mathbf{u})+\nabla^{\perp}\cdot(\nabla^{\perp}\psi_{2}\,\mathrm{div} \mathbf{u}) 
\end{aligned}
\end{equation}
These two terms admit the same estimate as $R^{(3)}$	
 . We combine them with 
$R^{(3)}$ to define a new remainder term $R^{(4)}.$	
 
 Collecting divergence terms into a single term $\mathrm{div} V$ with
\begin{equation}
\norm{V}_{L^p} \leq \norm{\rho \mathbf{u}}_{L^{p_{1}}}\norm{\nabla \mathbf{u}}_{L^{p_{2}}},
\end{equation}
we conclude that
\begin{equation}
\begin{aligned}
\begin{cases}
\Delta g = \mathrm{div} V + R^{(4)}, &\quad \text{in } \Omega,\\
g=0,&\quad \text{in } \partial\Omega,
\end{cases}
\end{aligned}
\end{equation}
where $$\norm{\nabla \Delta^{-1}_0R^{(3)}}_{L^p}\leq \norm{\rho \mathbf{u}}_{L^{p_{1}}}\norm{\nabla \mathbf{u}}_{L^{p_{2}}}.$$
So the elliptic estimate gives
\begin{equation}
\begin{aligned}
\norm{\nabla g}_{L^p} 
   &\leq \norm{V}_{L^p} + \norm{\nabla\Delta^{-1}_0R^{(4)}}_{L^p} \\
   &\leq \norm{\rho \mathbf{u}}_{L^{p_{1}}}\norm{\nabla \mathbf{u}}_{L^{p_{2}}}.
\end{aligned}
\end{equation}
Finally, recalling that
\begin{equation}
\begin{aligned}
\nabla F &= \nabla^{\perp}g + R^{(2)}, \\
\norm{R^{(2)}}_{L^p} &\leq \norm{\rho \mathbf{u}}_{L^{p_{1}}}\norm{\nabla u}_{L^{p_{2}}},
\end{aligned}
\end{equation}
we obtain the desired bound
\begin{equation}
\norm{\nabla F}_{L^p} \leq \norm{\rho \mathbf{u}}_{L^{p_{1}}}\norm{\nabla \mathbf{u}}_{L^{p_{2}}}.
\end{equation}
Thus the proof of the Lemma is completed.
\end{proof}

\section*{Data availability}
No data was used for the research described in the article.
\section*{ Conflict of interest}
The authors declare that they have no conflict of interest.

\section*{Ackonwledgments}
Xiangdi Huang is partially supported by Chinese Academy of Sciences Project for Young Scientists in Basic Research (Grant No. YSBR-031), National Natural Science Foundation of
China (Grant Nos. 12494542, 11688101). Jie Fan was supported by the China Postdoctoral Science Foundation (Grant 2025M783153). The author wishes to acknowledge the helpful comments and suggestions provided by Dr. Xinyu Fan, which have greatly enhanced the quality of this paper.

\end{document}